\newtheorem{theorem}{Theorem}[section]
\newtheorem{prop}[theorem]{Proposition}
\newtheorem{lem}[theorem]{Lemma}
\newtheorem{cor}[theorem]{Corollary}
\theoremstyle{remark}
\newtheorem{rem}[theorem]{Remark}
\numberwithin{equation}{section}
\renewcommand{\Re}{\operatorname{Re}}
\renewcommand{\Im}{\operatorname{Im}}
\newcommand*\diff{\mathop{}\!\mathrm{d}}
\DeclarePairedDelimiter\abs{\lvert}{\rvert}%
\DeclarePairedDelimiter\norm{\lVert}{\rVert}%
\let\oldabs\abs
\def\abs{\@ifstar{\oldabs}{\oldabs*}}
\let\oldnorm\norm
\def\norm{\@ifstar{\oldnorm}{\oldnorm*}}
\newcommand*{\myemail}[1]{%
    \normalsize\href{mailto:#1}{#1}\par
    }
\titleformat{\section}[block]{\centering \scshape \large}{\thesection.}{0.3\baselineskip}{}
\titlespacing{\section}{0pt}{*5}{*2}
\titleformat{\subsection}[block]{\bfseries}{\thesubsection.}{.5em}{}
\titlespacing{\subsection}{0pt}{*2.5}{*1}
\titleformat{\subsubsection}[runin]{\itshape}{\normalfont \thesubsubsection.}{.5em}{}[.]
\titlespacing{\subsubsection}{0pt}{*2.5}{0.5em}
\titleformat{\section}[block]{\centering \scshape \large}{\thesection.}{0.3\baselineskip}{}
\titlespacing{\section}{0pt}{*5}{*2}
\titleformat{\subsection}[block]{\bfseries}{\thesubsection.}{.5em}{}
\titlespacing{\subsection}{0pt}{*2.5}{*1}
\titleformat{\subsubsection}[runin]{\itshape}{\normalfont \thesubsubsection.}{.5em}{}[.]
\titlespacing{\subsubsection}{0pt}{*2.5}{0.5em}
\title{The focusing logarithmic Schrödinger equation: analysis of breathers and nonlinear superposition}
\date{\vspace{-1cm}}
\author[]{Guillaume Ferriere}
\affil[]{IMAG, Univ Montpellier, CNRS, Montpellier, France \\ \myemail{guillaume.ferriere@umontpellier.fr}}
\begin{document}

\maketitle

\begin{abstract}
    We consider the logarithmic Schrödinger equation in the focusing regime. For this equation, Gaussian initial data remains Gaussian. In particular, the Gausson - a time-independent Gaussian function - is an orbitally stable solution. In the general case in dimension $d=1$, the solution with Gaussian initial data is periodic, and we compute some approximations of the period in the case of small and large oscillations, showing that the period can be as large as wanted for the latter. The main result of this article is a principle of nonlinear superposition: starting from an initial data made of the sum of several standing Gaussian functions far from each other, the solution remains close (in $L^2$) to the sum of the corresponding Gaussian solutions for a long time, in square of the distance between the Gaussian functions.
\end{abstract}

\section{Introduction}

\subsection{Setting}

We are interested in the \textit{Logarithmic Non-Linear Schrödinger Equation}
\begin{equation}
    i \, \partial_t u + \frac{1}{2} \Delta u + \lambda u \ln{\lvert u \rvert^2} = 0, \qquad \qquad
    {{u}_|}_{t=0} = u_{\textnormal{in}}, \label{foc_log_nls}
\end{equation}
with $x \in \mathbb{R}^d$, $d \geq 1$, $\lambda \in \mathbb{R} \setminus \{ 0 \}$. It was introduced as a model of nonlinear wave mechanics and in nonlinear optics (\cite{nonlin_wave_mec}, see also \cite{inco_white_light_log, log_nls_nuclear_physics, quantal_damped_motion, solitons_log_med, log_nls_magma_transp}). The case $\lambda < 0$ (whose study goes back to \cite{cazenave-haraux, Guerrero_Lopez_Nieto_H1_solv_lognls}) was recently studied by R. Carles and I. Gallagher who made explicit an unusually faster dispersion with a universal behaviour of the modulus of the solution (see \cite{carlesgallagher}). The knowledge of this behaviour was very recently improved with a convergence rate but also extended through the semiclassical limit in \cite{Ferriere__Wass_semiclass_defoc_NLS}. On the other hand, the case $\lambda > 0$ seems to be the more interesting from a physical point of view and has been studied formally and rigorously a lot (see for instance \cite{D'av_Mont_Squa_lognls, quantal_damped_motion}). In particular, the existence and uniqueness of solutions to the Cauchy problem have been solved in \cite{cazenave-haraux}.
Moreover, it has been proved to be the focusing case and, in this context, a usual question is the existence of stationary states called \emph{solitons} and their stability.
For this equation, we know that the so called \emph{Gausson}
\begin{equation}
    G^d (x) \coloneqq \exp \Bigl( \frac{d}{2} - \lambda \abs{x}^2 \Bigr), \qquad x \in \mathbb{R}^d, \label{expr_gausson}
\end{equation}
and its by-products through the invariants of the equation (translation in space, Galilean invariance, multiplication by a complex constant of modulus 1) are explicit solutions to \eqref{foc_log_nls} and bound states for the energy functional. Several results address the orbital stability of the Gausson as well as the existence of other stationary solutions to \eqref{foc_log_nls}; see e.g. \cite{nonlin_wave_mec, Cazenave_log_nls, D'av_Mont_Squa_lognls, Ardila__Orbital_stability_Gausson}.
However, these solutions are not the only explicit solutions: indeed, any Gaussian initial data remains Gaussian and can be made explicit up to a matrix ODE.
In this article, we study a particular class of time-dependent Gaussian solutions to \eqref{foc_log_nls} already introduced in \cite{nonlin_wave_mec} which are (almost) periodic in time in dimension $1$, so called \emph{breathers}, and we give a partial result for the nonlinear superposition of Gaussian solutions.

\begin{rem}[Effect of scaling factors] \label{rem_scaling}
    As noticed in \cite{carlesgallagher}, unlike what happens in the case of an homogeneous nonlinearity (classically of the form $\abs{u}^p u$), replacing $u$ with $\kappa u$ ($\kappa > 0$) in \eqref{foc_log_nls} has only little effect, since we have
    \begin{equation*}
        i \, \partial_t (\kappa u) + \frac{1}{2} \Delta (\kappa u) + \lambda (\kappa u) \ln{\lvert \kappa u \rvert^2} - 2 \lambda (\ln{\kappa}) \kappa u = 0.
    \end{equation*}
    The scaling factor thus corresponds to a purely time-dependent gauge transform:
    \begin{equation*}
        \kappa u(t,x) e^{-2it \lambda \ln{\kappa}}
    \end{equation*}
    solves \eqref{foc_log_nls} (with initial datum $\kappa u_0$). In particular, the $L^2$-norm of the initial datum does not influence the dynamics of the solution.
\end{rem}

\subsection{The Logarithmic Non-Linear Schrödinger Equation}

The Logarithmic \\ Schrödinger Equation was introduced by I. Bia{\l}ynicki-Birula and J. Mycielski \cite{nonlin_wave_mec} who proved that it is the only nonlinear theory in which \emph{the separability of noninteracting systems} hold: for noninteracting subsystems, no correlation is introduced by the nonlinear term. This means that for any initial data of the form $u_\textnormal{in} = u_\textnormal{in}^1 \otimes u_\textnormal{in}^2$, i.e. $u_\textnormal{in} (x) = u_\textnormal{in}^1 (x_1) \, u_\textnormal{in}^2 (x_2)$ for all $x_1 \in \mathbb{R}^{d_1}$, $x_2 \in \mathbb{R}^{d_2}$ and $x = (x_1, x_2)$, the solution $u$ to \eqref{foc_log_nls} (in dimension $d = d_1 + d_2$) is $u (t) = u^1 (t) \otimes u^2 (t)$ where $u^1$ and $u^2$ are the solutions to \eqref{foc_log_nls} in dimension $d_1$ (resp. $d_2$) with initial data $u_\textnormal{in}^1$ and $u_\textnormal{in}^2$ respectively.

They also emphasized that the case $\lambda > 0$ is probably the most physically relevant, and we will mathematically study this case in the rest of this paper.
For this case, the Cauchy problem has already been studied in \cite{cazenave-haraux} (see also \cite{Cazenave_semlin_lognls}). We define the energy space
\begin{equation*}
    W (\mathbb{R}^d) \coloneqq \{ v \in H^1 (\mathbb{R}^d), \abs{v}^2 \ln{\abs{v}^2} \in L^1 (\mathbb{R}^d) \}.
\end{equation*}
It is a reflexive Banach space when endowed with a Luxembourg type norm (see \cite{Cazenave_log_nls}).
We can also define the mass and energy for all $v \in W (\mathbb{R}^d)$:
\begin{gather*}
    M (v) \coloneqq \norm{v}_{L^2}^2, \qquad E(v) \coloneqq \frac{1}{2} \norm{\nabla v}_{L^2}^2 - \lambda \int_{\mathbb{R}^d} \abs{v}^2 (\ln{\abs{v}^2} - 1) \diff x.
\end{gather*}

\begin{theorem}[{\cite[Théorème~2.1]{cazenave-haraux}}, see also {\cite[Theorem~9.3.4]{Cazenave_semlin_lognls}}]
    For any initial data $u_\textnormal{in} \in W (\mathbb{R}^d)$, there exists a unique, global solution $u \in \mathcal{C}_b (\mathbb{R}, W(\mathbb{R}^d))$. Moreover the mass $M(u(t))$ and the energy $E(u(t))$ are independent of time.
\end{theorem}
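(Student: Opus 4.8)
The plan is to obtain all four assertions — existence, uniqueness, globality and conservation — by a regularization scheme, the whole difficulty being concentrated in the nonlinearity $f(u) = \lambda u \ln\abs{u}^2$, which is not locally Lipschitz at $u = 0$ (both the logarithm and its derivative are singular there) and is unbounded as $\abs{u} \to \infty$; the usual contraction-mapping argument therefore fails directly. First I would approximate $f$ by a family $(f_\varepsilon)_{\varepsilon > 0}$ of globally Lipschitz nonlinearities, for instance by setting $f_\varepsilon(u) = \lambda u \ln\bigl(\frac{\varepsilon + \abs{u}^2}{1 + \varepsilon \abs{u}^2}\bigr)$, which is smooth, globally Lipschitz, converges pointwise to $f$ as $\varepsilon \to 0$, and retains the gauge structure of the equation. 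For each fixed $\varepsilon$, the regularized equation is locally well-posed in $H^1(\mathbb{R}^d)$ by the standard Strichartz/fixed-point theory, producing a unique solution $u_\varepsilon$.

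Next I would derive a priori bounds uniform in $\varepsilon$. The regularized flow conserves an approximate mass and energy; combining mass conservation with the energy and using the coercivity of $E$ — splitting the logarithmic term over $\{\abs{u} \le 1\}$, where $-\abs{u}^2 \ln\abs{u}^2 \ge 0$, and over $\{\abs{u} > 1\}$, where the slow growth $\ln\abs{u}^2 \le C_\varepsilon \abs{u}^{2\varepsilon}$ is absorbed through a Gagliardo–Nirenberg inequality — yields a bound on $\norm{u_\varepsilon(t)}_{H^1}$ uniform in $t$ and $\varepsilon$. This both excludes blow-up and makes each $u_\varepsilon$ global. Using these bounds together with a compactness argument (Aubin–Lions and weak-$*$ limits), I would extract a limit $u$ and verify that $f_\varepsilon(u_\varepsilon) \to f(u)$ in a suitable weak sense, which requires controlling $\int \abs{u_\varepsilon}^2 \ln\abs{u_\varepsilon}^2$ and passing to almost-everywhere convergence, so that $u \in \mathcal{C}_b(\mathbb{R}, W(\mathbb{R}^d))$ solves \eqref{foc_log_nls}.

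Uniqueness I would prove directly in the energy space, relying on the elementary pointwise inequality
\[ \abs{\Im\bigl[(z_1 \ln\abs{z_1}^2 - z_2 \ln\abs{z_2}^2)(\bar z_1 - \bar z_2)\bigr]} \le C\, \abs{z_1 - z_2}^2, \qquad z_1, z_2 \in \mathbb{C}. \]
For two solutions $u$ and $v$ with the same datum, writing $w = u - v$ and computing $\frac{\diff}{\diff t} \norm{w}_{L^2}^2$ makes the kinetic term vanish, since $i\Delta$ is skew-adjoint on $L^2$, so that only the nonlinear contribution remains; the inequality bounds it by $2 \abs{\lambda} C \norm{w}_{L^2}^2$, and Grönwall concludes. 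Conservation of mass and energy then follows by pairing the equation with $\bar u$ (imaginary part) and with $\partial_t \bar u$ (real part) respectively, justified at the regularized level and preserved in the limit.

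The main obstacle will be the interplay between the regularization and the passage to the limit: the approximation must be chosen Lipschitz while simultaneously preserving both the coercivity needed for the uniform $H^1$ bound and the imaginary-part cancellation underlying uniqueness, and the singular density $\abs{u}^2 \ln\abs{u}^2$ must be controlled in $L^1$ through the limit so that the solution genuinely belongs to $W(\mathbb{R}^d)$ — this $L^1$-control of the logarithmic term is the technically heaviest point.
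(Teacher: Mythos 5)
This theorem is imported from Cazenave--Haraux (the paper gives no proof of its own, only the citation), and your outline follows essentially the same route as that source: regularize the non-Lipschitz nonlinearity, obtain $\varepsilon$-uniform bounds from the approximate conservation laws, pass to the limit by compactness, and prove uniqueness in $L^2$ via the pointwise logarithmic inequality. In particular, your uniqueness step is precisely the argument the paper itself recalls from \cite{cazenave-haraux}, namely Lemma \ref{lem_log_inequality} combined with the $L^2$ energy estimate at the start of Section \ref{sec_nonlin_superp}.
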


Note that whichever the sign of $\lambda$ is, the energy $E$ has no definite sign. The distinction between focusing or defocusing nonlinearity is thus a priori ambiguous.
However, this ambiguity has already been removed by \cite{Cazenave_log_nls} (case $\lambda > 0$) and \cite{carlesgallagher} (case $\lambda < 0$). Indeed, in the latter, the authors show that all the solutions disperse in an unusually faster way with a universal dynamic: after rescaling, the modulus of the solution converges to a universal Gaussian profile.
On the other hand, it has been proved that there is no dispersion for large times for $\lambda > 0$ thanks to the following result, and hence it is the focusing case.
\begin{lem}[{\cite[Lemma~3.3]{Cazenave_log_nls}}]
    Let $\lambda > 0$.
    For any $k < \infty$ such that
    \begin{equation*}
        L_k \coloneqq \{ v \in W (\mathbb{R}^d), \norm{v}_{L^2} = 1, E(v) \leq k \} \neq \emptyset,
    \end{equation*}
    there holds
    \begin{equation*}
        \inf_{\substack{v \in L_k \\ 1 \leq p \leq \infty}} \norm{v}_{L^p} > 0.
    \end{equation*}
\end{lem}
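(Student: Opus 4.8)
The plan is to reduce the simultaneous control of all the $L^p$-norms to a two-sided bound on the single scalar quantity
\begin{equation*}
    I(v) \coloneqq \int_{\mathbb{R}^d} \abs{v}^2 \ln \abs{v}^2 \diff x,
\end{equation*}
exploiting that for $v \in L_k$ the measure $\diff \mu \coloneqq \abs{v}^2 \diff x$ is a \emph{probability} measure, since $\norm{v}_{L^2}^2 = 1$. First I would rewrite the energy using $\norm{v}_{L^2}^2 = 1$ as
\begin{equation*}
    E(v) = \frac{1}{2} \norm{\nabla v}_{L^2}^2 - \lambda I(v) + \lambda \leq k,
\end{equation*}
so that, discarding the nonnegative kinetic term, one immediately reads off the lower bound $I(v) \geq 1 - k/\lambda$.

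The core of the argument is Jensen's inequality on the probability space $(\mathbb{R}^d, \mu)$. For $p \neq 2$ one writes
\begin{equation*}
    \norm{v}_{L^p}^p = \int_{\mathbb{R}^d} \abs{v}^{p-2} \diff \mu = \int_{\mathbb{R}^d} \exp\!\Bigl( \tfrac{p-2}{2} \ln \abs{v}^2 \Bigr) \diff \mu \geq \exp\!\Bigl( \tfrac{p-2}{2} I(v) \Bigr),
\end{equation*}
by convexity of the exponential, whence
\begin{equation*}
    \norm{v}_{L^p} \geq \exp\!\Bigl( \bigl( \tfrac{1}{2} - \tfrac{1}{p} \bigr) I(v) \Bigr), \qquad 1 \leq p \leq \infty.
\end{equation*}
Since the coefficient $\frac{1}{2} - \frac{1}{p}$ stays in the bounded interval $[-\frac{1}{2}, \frac{1}{2}]$, a lower bound on $\norm{v}_{L^p}$ uniform in $p$ will follow as soon as $I(v)$ is controlled from both sides: for $p \geq 2$ the coefficient is nonnegative and the lower bound on $I(v)$ suffices, while for $1 \leq p < 2$ the coefficient is negative and an upper bound on $I(v)$ is what is needed. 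The endpoint $p = \infty$ is the limit $\frac{1}{p} \to 0$ and may also be treated directly via $\ln \abs{v}^2 \leq 2 \ln \norm{v}_{L^\infty}$.

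The main obstacle is therefore the \emph{upper} bound on $I(v)$, which cannot come from the energy inequality alone: extracting it from the identity above would require an a priori bound on $\norm{\nabla v}_{L^2}$, which is precisely what is missing. To break this circularity I would invoke the logarithmic Sobolev inequality, which for $\norm{v}_{L^2} = 1$ reads $I(v) \leq \frac{d}{2} \ln\!\bigl( C_d \norm{\nabla v}_{L^2}^2 \bigr)$ for a dimensional constant $C_d$. Substituting into the energy identity gives $E(v) \geq g\bigl( \norm{\nabla v}_{L^2}^2 \bigr)$ with $g(s) = \frac{1}{2} s - \frac{\lambda d}{2} \ln(C_d s) + \lambda$; since $g(s) \to +\infty$ both as $s \to 0^+$ and as $s \to +\infty$, the constraint $E(v) \leq k$ confines $\norm{\nabla v}_{L^2}^2$ to a bounded interval $[s_-, s_+] \subset (0, \infty)$ depending only on $k, \lambda, d$. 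In particular $\norm{\nabla v}_{L^2}^2 \leq s_+$, and feeding this back into the log-Sobolev inequality yields the desired upper bound $I(v) \leq I_+ \coloneqq \frac{d}{2} \ln(C_d s_+)$. Inserting the two bounds $1 - k/\lambda \leq I(v) \leq I_+$ into the Jensen estimate and taking the infimum over $p \in [1, \infty]$ then produces a single constant $c = c(k, \lambda, d) > 0$ with $\norm{v}_{L^p} \geq c$ for all $v \in L_k$, which is the claim. The only nonroutine ingredient is the logarithmic Sobolev inequality used to close the loop on the kinetic energy; everything else is elementary, and the argument is uniform in the dimension $d$ (unlike a naive Hölder interpolation through $\norm{v}_{L^\infty}$, which degenerates for $d \geq 2$).
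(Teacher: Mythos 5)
Your argument is correct, but note that the paper itself contains no proof of this statement: it is imported verbatim from Cazenave \cite[Lemma~3.3]{Cazenave_log_nls}, and the only trace of an argument in the text is the subsequent remark that $\norm{u(t)}_{L^\infty} \geq \exp[-E/(2M)]$, which is exactly the $p=\infty$ endpoint of your Jensen estimate combined with your lower bound on $I(v)$. Your reconstruction follows the same overall strategy as the cited source: Jensen's inequality on the probability measure $\abs{v}^2 \diff x$ is precisely the classical ``logarithmic interpolation'' step relating $\int \abs{v}^2 \ln{\abs{v}^2}$ to $\ln \norm{v}_{L^p}$ on either side of $p=2$, and the two-sided control of $I(v)$ --- from below by the energy constraint after discarding the kinetic term, from above by first confining $\norm{\nabla v}_{L^2}^2$ to a compact subset of $(0,\infty)$ --- is the heart of the matter. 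The one place where you diverge is in how you close the loop on the kinetic energy: you use the logarithmic Sobolev inequality, whereas Cazenave's argument runs through a Gagliardo--Nirenberg bound of $\int_{\abs{v}>1} \abs{v}^2 \ln{\abs{v}^2}$ by $\norm{v}_{L^{2+\alpha}}^{2+\alpha}$ with a subquadratic power of $\norm{\nabla v}_{L^2}$; the two routes are essentially equivalent (log-Sobolev follows from Gagliardo--Nirenberg plus the same Jensen step), so this is a cosmetic difference. Two small points are worth making explicit in a final write-up: for $1 \leq p < 2$ the function $v$ need not belong to $L^p(\mathbb{R}^d)$, but your Jensen inequality remains valid (the left-hand side may be $+\infty$) because $\ln{\abs{v}^2} \in L^1(\abs{v}^2 \diff x)$ is guaranteed by the definition of $W(\mathbb{R}^d)$; and $\norm{\nabla v}_{L^2} > 0$ since no nonzero constant lies in $L^2(\mathbb{R}^d)$, so the right-hand side of the log-Sobolev inequality is well defined.
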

This lemma, along with the conservation of the energy and the invariance through scaling factors (with Remark \ref{rem_scaling}), indicates that the solution to \eqref{foc_log_nls} is not dispersive, no matter how small the initial data are. For instance, its $L^\infty$ norm is bounded from below: to be more precise, there holds for all $t \in \mathbb{R}$ (see the proof of the above result)
\begin{equation*}
    \norm{u (t)}_{L^\infty} \geq \exp{\Bigl[ - \frac{E (u(t))}{2 M (u(t))} \Bigr]} = \exp{\Bigl[ - \frac{E (u_\textnormal{in})}{2 M (u_\textnormal{in})} \Bigr]}.
\end{equation*}

Actually, a specific Gaussian function \eqref{expr_gausson} called \emph{Gausson} and its by-products through the invariants of the equation and the scaling effect, defined by
\begin{equation*}
    G^d_{\omega, x_0, v, \theta} (t,x) = \exp \left[ i \left( \theta + 2 \lambda \omega t - v \cdot x + \frac{\abs{v}^2}{2} t \right) + \omega - \lambda \abs{x - x_0 - vt}^2 \right],
\end{equation*}
where $t \in \mathbb{R}, \, x \in \mathbb{R}^d$, and for any $\omega, \theta \in \mathbb{R}$, $x_0, v \in \mathbb{R}^d$, are known to be solutions to \eqref{foc_log_nls}, as proved in \cite{D'av_Mont_Squa_lognls} (and already noticed in \cite{nonlin_wave_mec}). It has also been proved that other radial stationary solutions to \eqref{foc_log_nls} exist in dimension $d \geq 3$ (see \cite{D'av_Mont_Squa_lognls}), but the Gausson is clearly special since it is the unique positive $\mathcal{C}^2$ stationary solution to \eqref{foc_log_nls} (also proved in \cite{D'av_Mont_Squa_lognls}) and also since it is orbitally stable (\cite{Ardila__Orbital_stability_Gausson}, following the work of \cite{Cazenave_log_nls}).

\begin{theorem}[{\cite[Theorem~1.5]{Ardila__Orbital_stability_Gausson}}]
    Let $\omega \in \mathbb{R}$.
    For any $\varepsilon > 0$, there exists $\eta > 0$ such that for all $u_0 \in W (\mathbb{R}^d)$ satisfying
    \begin{equation*}
        \inf_{\theta, x_0} \norm{u_0 - e^{\omega + i \theta} G^d (. - x_0)}_{W ( \mathbb{R}^d )} < \eta,
    \end{equation*}
    the solution $u(t)$ of \eqref{foc_log_nls} with initial data $u_0$ satisfies
    \begin{equation*}
        \sup_t \inf_{\theta, x_0} \norm{u (t) - e^{\omega + i \theta} G^d (. - x_0)}_{W ( \mathbb{R}^d )} < \varepsilon.
    \end{equation*}
\end{theorem}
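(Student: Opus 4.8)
The plan is to establish orbital stability through the variational characterization of the Gausson as a constrained energy minimizer, combined with the conservation laws and a compactness argument. First I would reduce to the case $\omega = 0$: by Remark~\ref{rem_scaling} the factor $e^\omega$ amounts to multiplying by the constant $\kappa = e^\omega$ together with the purely time-dependent phase $e^{-2it\lambda\omega}$, which leaves the modulus unchanged and is absorbed into the infimum over $\theta$; hence stability for $G^d$ implies stability for $e^\omega G^d$. It then suffices to control the distance to the orbit,
\[
    \rho(v) \coloneqq \inf_{\theta, x_0} \norm{v - e^{i\theta} G^d(\cdot - x_0)}_{W(\mathbb{R}^d)},
\]
along the flow given by the cited global well-posedness theorem.

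The heart of the argument is the variational identity $E(v) \geq E(G^d)$ for every $v \in W(\mathbb{R}^d)$ with $M(v) = M(G^d)$, with equality precisely on the orbit $\{ e^{i\theta} G^d(\cdot - x_0) \}$. I would derive this from the sharp logarithmic Sobolev inequality. Writing $\int \abs{v}^2 \ln\abs{v}^2 = \int \abs{v}^2 \ln(\abs{v}^2/m) + m \ln m$ with $m = M(v)$, and inserting the optimal inequality $\int \abs{v}^2 \ln(\abs{v}^2/m) \leq \frac{a}{\pi} \norm{\nabla v}_{L^2}^2 - d(1 + \ln a) m$ (valid for every $a > 0$ and saturated by Gaussians) into the expression $E(v) = \frac12 \norm{\nabla v}_{L^2}^2 - \lambda \int \abs{v}^2 \ln\abs{v}^2 + \lambda m$, the choice $a = \pi/(2\lambda)$ exactly cancels the kinetic term and produces a lower bound depending on $m$ alone; the rigidity of the equality case then forces $v$ to be the associated Gaussian, i.e.\ a member of the Gausson orbit. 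Choosing instead $a$ slightly below $\pi/(2\lambda)$ leaves a positive multiple of $\norm{\nabla v}_{L^2}^2$ in the estimate, which supplies the $H^1$ bound needed for compactness.

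With this in hand I would argue by contradiction. If stability failed, there would exist $\varepsilon_0 > 0$, initial data $u_{0,n}$ with $\rho(u_{0,n}) \to 0$, and times $t_n$ with $\rho(u_n(t_n)) \geq \varepsilon_0$. Continuity of $M$ and $E$ on $W(\mathbb{R}^d)$ together with their conservation gives $M(u_n(t_n)) \to M(G^d)$ and $E(u_n(t_n)) \to E(G^d)$, so that $v_n \coloneqq u_n(t_n)$ is an asymptotically minimizing sequence for the constrained problem, the harmless mismatch of masses being absorbed by the continuous dependence of the minimal energy on $m$. By the refined estimate $v_n$ is bounded in $H^1$, and a concentration-compactness analysis, ruling out vanishing and dichotomy through the strict subadditivity inherited from the sharp inequality, yields after translation and phase adjustment a strong $H^1$ limit which the equality case identifies as $G^d$. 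Promoting this to convergence in the full energy norm then uses that both $\norm{\nabla v_n}_{L^2}$ and the entropy $\int \abs{v_n}^2 \ln\abs{v_n}^2$ converge to their values at $G^d$, combined with almost-everywhere convergence and a Brezis--Lieb type argument adapted to the Orlicz structure of $W(\mathbb{R}^d)$. This gives $\rho(v_n) \to 0$, contradicting $\rho(v_n) \geq \varepsilon_0$.

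The main obstacle is precisely this last step. The entropy functional $v \mapsto \int \abs{v}^2 \ln\abs{v}^2$ is not weakly lower semicontinuous, because on the region $\{ \abs{v} < 1 \}$ one has $\abs{v}^2 \ln\abs{v}^2 < 0$, and the logarithmic nonlinearity fails to be locally Lipschitz near the origin; controlling these defects forces one to work genuinely in the Luxembourg norm of $W(\mathbb{R}^d)$ rather than in $H^1$ alone, and to exploit the rigidity of the equality case of the logarithmic Sobolev inequality in order to exclude any loss of mass to infinity.
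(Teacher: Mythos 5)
This theorem is quoted verbatim from \cite[Theorem~1.5]{Ardila__Orbital_stability_Gausson}; the present paper offers no proof of it, so there is no internal argument to compare against. Judged on its own terms, your sketch is a faithful reconstruction of the standard Cazenave--Lions route that the cited reference (building on \cite{Cazenave_log_nls}) actually follows: reduce to $\omega=0$ via Remark~\ref{rem_scaling}, characterize the Gausson orbit as the equality case of the sharp logarithmic Sobolev inequality so that $E(v)\geq E(G^d)$ on the mass constraint, and run a contradiction/concentration-compactness argument on an almost-minimizing sequence. Two points deserve emphasis. First, the strict subadditivity you invoke to exclude dichotomy is not merely ``inherited from the sharp inequality'': it should be verified from the explicit formula for the constrained infimum, which for logNLS is $m\mapsto \lambda m(C-\ln m)$, whence $(m_1+m_2)\ln(m_1+m_2)>m_1\ln m_1+m_2\ln m_2$ gives the strict inequality; vanishing is excluded by the lower bound on $L^p$ norms recalled in the paper (Lemma~1.3 of the introduction). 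Second, you correctly identify the genuine difficulty --- upgrading $H^1$ convergence to convergence in $W(\mathbb{R}^d)$, where the entropy term is neither weakly lower semicontinuous nor locally Lipschitz near $0$ --- but you leave it at the level of a named obstacle; this is precisely the step where \cite{Ardila__Orbital_stability_Gausson} has to work with the decomposition of $\abs{v}^2\ln\abs{v}^2$ into its positive and negative parts and the Luxembourg norm, and a complete proof would need that analysis spelled out rather than gestured at. As a blind reconstruction of a cited result, the strategy is sound and matches the literature; as a self-contained proof it still has these two holes to fill.
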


\subsection{Main results}

From now on, we assume $\lambda > 0$.

\subsubsection{Existence of breathers}

The Gausson is an explicit important solution to \eqref{foc_log_nls}.
However, it is not the only solution that can be made explicit:

\begin{prop} \label{prop_expression_general_gaussian}
Any Gaussian initial data
\begin{equation}
    \exp \Bigl[ \frac{d}{2} - x^\top A_\textnormal{in} x \Bigr], \label{gauss_in_data}
\end{equation}
with $A_\textnormal{in} \in S_d (\mathbb{C}) \coloneqq \{ M \in M_d (\mathbb{C}), M^\top = M \}$ (where $^\top$ designates the transposition) such that $\Re A_\textnormal{in}$ is positive definite, gives rise to a Gaussian solution to \eqref{foc_log_nls} of the form
\begin{equation}
    u^{A_\textnormal{in}} (t,x) \coloneqq b(t) \exp \Bigl[ \frac{d}{2} - x^\top A (t) x \Bigr]. \label{gen_gauss_sol}
\end{equation}
\end{prop}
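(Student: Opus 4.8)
The plan is to substitute the ansatz \eqref{gen_gauss_sol} into \eqref{foc_log_nls} and reduce the PDE to a closed system of ODEs for the symmetric matrix $A(t)$ and the scalar $b(t)$, which I then solve. Writing $u = b(t)\exp[\tfrac d2 - x^\top A x]$ and using $A^\top = A$, a direct computation gives
\[
\partial_t u = u\Bigl(\frac{\dot b}{b} - x^\top \dot A x\Bigr), \qquad \nabla u = -2u\,Ax, \qquad \Delta u = u\bigl(4\,x^\top A^2 x - 2\operatorname{tr}A\bigr),
\]
while, since $x$ is real and $A$ symmetric, $\Re(x^\top A x) = x^\top(\Re A)x$, so that $\ln\abs{u}^2 = \ln\abs{b}^2 + d - 2\,x^\top(\Re A)x$.

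Dividing \eqref{foc_log_nls} by $u$ and collecting separately the terms quadratic in $x$ and those independent of $x$, and using that a complex symmetric matrix $M$ with $x^\top M x = 0$ for every $x\in\mathbb R^d$ must vanish, I obtain the system
\begin{align*}
\dot A &= -2i\bigl(A^2 - \lambda\,\Re A\bigr), \\
i\,\frac{\dot b}{b} &= \operatorname{tr}A - \lambda d - \lambda\ln\abs{b}^2,
\end{align*}
together with the initial conditions $A(0)=A_\textnormal{in}$ and $b(0)=1$ forced by matching \eqref{gen_gauss_sol} with \eqref{gauss_in_data}. The first equation is autonomous and does not involve $b$; its right-hand side is a polynomial in $A$ and in $\Re A$, hence smooth as a function of the real and imaginary parts of $A$, and it maps $S_d(\mathbb C)$ into itself (if $A=A^\top$ then $A^2$ and $\Re A$ are symmetric). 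The Cauchy--Lipschitz theorem therefore produces a unique local solution $A\in\mathcal C^1$ valued in $S_d(\mathbb C)$; the scalar equation, driven by $\operatorname{tr}A$, is then integrated on the same interval, the logarithmic term causing no trouble as long as $b$ stays away from $0$, which holds near $t=0$ since $b(0)=1$.

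It remains to verify that this local solution yields an admissible solution of \eqref{foc_log_nls} in $W(\mathbb R^d)$, which requires $\Re A(t)$ to stay positive definite. Writing $A = P + iQ$ with $P,Q$ real symmetric, the matrix equation splits into $\dot P = 2(PQ + QP)$ and $\dot Q = -2\bigl(P^2 - Q^2 - \lambda P\bigr)$. The equation for $P$ is a linear Lyapunov equation once $Q$ is known, so $P(t) = \Phi(t)\,P(0)\,\Phi(t)^\top$ where $\Phi$ solves $\dot\Phi = 2Q(t)\Phi$, $\Phi(0)=\mathrm{Id}$. As $\Phi(t)$ is invertible, this congruence preserves positive definiteness, and since $P(0)=\Re A_\textnormal{in}$ is positive definite, so is $P(t)$ throughout the interval of existence; hence $u^{A_\textnormal{in}}(t)$ is a genuine Gaussian lying in $W(\mathbb R^d)$ and solving \eqref{foc_log_nls} there.

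The one genuinely delicate point, and the place I expect the real work, is global existence in time: the quadratic (Riccati-type) nonlinearity could a priori blow up in finite time. I would rule this out using the conserved mass and energy evaluated on the Gaussian. An explicit Gaussian integration expresses $M(u)$ and $E(u)$ in terms of $\abs{b}$, $\det P$, $\operatorname{tr}P$ and $\operatorname{tr}(P^{-1}Q^2)$; conservation of $M$ fixes $\abs{b}^2/\sqrt{\det(2P)}$, the logarithmic part of $E$ then controls $\ln\abs{b}^2$ and hence $\det P$, while the kinetic part of $E$ bounds $\operatorname{tr}P$ and $\operatorname{tr}(P^{-1}Q^2)$. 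Together these confine $P$ to a compact set of positive definite matrices bounded away from degeneracy and bound $Q$ and $b$, so no finite-time blow-up occurs and the solution is global. Finally, by the uniqueness part of the global well-posedness theorem recalled above, this global Gaussian is the solution with initial datum \eqref{gauss_in_data}.
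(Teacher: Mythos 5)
Your derivation is correct and follows essentially the same route as the paper, which states this proposition by deferring to the matrix ODE system (6.14)--(6.16) of Bia{\l}ynicki-Birula--Mycielski and only writes out the computation in dimension one (Section 2), where your system $\dot A=-2i(A^2-\lambda\,\Re A)$ reduces, via $A=\mu/2$, to its equation $-i\dot\mu+\mu^2=2\lambda\mu_r$, and your Lyapunov-congruence argument recovers the stated preservation of positive definiteness of $\Re A(t)$. The one step you only sketch, global existence for the Riccati-type matrix ODE, is exactly what the paper settles (in $d=1$) through the conserved Hamiltonian $\dot\tau^2+\tau^{-2}+2\ln\tau$, which is the one-dimensional instance of the mass/energy confinement you propose, so your outline closes in the same way.
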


Furthermore, $b(t)$ is explicitly given by the knowledge of $A$ whereas the evolution of $A$ is given by a first-order matrix ordinary differential equations (see the system of equations (6.14), (6.15) and (6.16) of \cite{nonlin_wave_mec}).
In dimension $d=1$, this system simply becomes a system of two first-order ODEs. Yet, this system can be even more simplified, as it can be summarized into a particular second-order ODE, whose evolution can be better understood (see \cite{carlesgallagher, Bao_Carles_al__error_est}). In particular, as already noticed, an important feature is that both solutions to the system of two first-order ODEs and the second-order ODE are periodic, whatever the initial data are.

Before introducing this ODE, we define

\begin{equation*}
    \mathbb{C}^+ \coloneqq \{ z \in \mathbb{C}, \Re z > 0 \}.
\end{equation*}

\begin{prop} \label{prop_ODE_r}
    For any $\alpha \in \mathbb{C}^+$, consider the ordinary differential equation
    \begin{equation*}
        \Ddot{r}_\alpha = \frac{1}{r_\alpha^3} - \frac{2 \lambda}{r_\alpha}, \qquad r_\alpha (0) = \Re \alpha \eqqcolon \alpha_r, \qquad \dot{r}_\alpha (0) = \Im \alpha \eqqcolon \alpha_i.
    \end{equation*}
    It has a unique solution $r_{\alpha} \in \mathcal{C}^\infty (\mathbb{R})$ with values in $(0, \infty)$. Moreover, it is periodic.
\end{prop}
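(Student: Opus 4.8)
The plan is to exploit the fact that this equation is a one-dimensional conservative (Newtonian) system and to read off all the information from its potential. First I would rewrite the ODE as $\ddot{r}_\alpha = -V'(r_\alpha)$ with
\[
    V(r) = \frac{1}{2r^2} + 2\lambda \ln r, \qquad r \in (0,\infty),
\]
so that the energy $E(t) \coloneqq \tfrac12 \dot{r}_\alpha(t)^2 + V(r_\alpha(t))$ is conserved along any solution (differentiate and use the equation). Since the right-hand side $r \mapsto r^{-3} - 2\lambda r^{-1}$ is $\mathcal{C}^\infty$ (indeed real-analytic) on $(0,\infty)$ and $\alpha_r = \Re\alpha > 0$, the Cauchy--Lipschitz theorem yields a unique maximal solution $r_\alpha$, of class $\mathcal{C}^\infty$ by bootstrapping, defined on an open interval and taking values in $(0,\infty)$.

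The second step is to upgrade this to a global solution by showing it cannot reach either singular end $0$ or $+\infty$; this is where the sign $\lambda > 0$ is essential. I would study $V$ directly: one has $V(r) \to +\infty$ both as $r \to 0^+$ (the $\tfrac{1}{2r^2}$ term) and as $r \to +\infty$ (the $2\lambda \ln r$ term, positive since $\lambda > 0$), while $V'(r) = (2\lambda r^2 - 1)/r^3$ vanishes only at $r_* \coloneqq (2\lambda)^{-1/2}$, which is therefore the unique global minimum. Confinement then follows from energy conservation: writing $E \coloneqq \tfrac12 \alpha_i^2 + V(\alpha_r)$ for the constant energy, the inequality $V(r_\alpha(t)) \le E$ forces $r_\alpha(t)$ to stay in the compact sublevel interval $\{V \le E\} = [r_-, r_+] \subset (0,\infty)$. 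A solution confined to a compact subset of the open domain cannot blow up in finite time, so it extends to all of $\mathbb{R}$, which settles existence, uniqueness, smoothness and positivity at once.

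For periodicity I would distinguish two cases. If $E = V(r_*)$, then necessarily $\alpha_i = 0$ and $\alpha_r = r_*$, and $r_\alpha \equiv r_*$ is the (trivially periodic) equilibrium. If $E > V(r_*)$, then since $V$ is strictly decreasing on $(0, r_*)$ and strictly increasing on $(r_*, \infty)$, the equation $V(r) = E$ has exactly two roots $r_- < r_* < r_+$, the turning points, with $V'(r_\pm) \ne 0$. I would then use the quadrature coming from energy conservation: on any time interval where $\dot{r}_\alpha > 0$ one has $\dot{r}_\alpha = \sqrt{2(E - V(r_\alpha))}$, hence $\diff t = \diff r/\sqrt{2(E - V(r))}$, and the time to travel from $r_-$ to $r_+$ equals
\[
    \frac{T}{2} = \int_{r_-}^{r_+} \frac{\diff r}{\sqrt{2\,(E - V(r))}}.
\]
Because $V'(r_\pm) \ne 0$, the quantity $E - V(r)$ vanishes linearly near each turning point, so the integrand has only an integrable $(r_\pm - r)^{-1/2}$-type singularity and $T < \infty$. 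The time-reversal symmetry $t \mapsto -t$ (the equation has no first-order term) makes the return trip from $r_+$ to $r_-$ take the same time, so the orbit closes up and $r_\alpha$ is $T$-periodic.

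The routine part is the confinement estimate; the step requiring the most care is the periodicity, namely verifying rigorously that the trajectory reaches each turning point in the finite time given by the convergent period integral and turns around there. Equivalently, I could phrase this as a phase-plane argument: for $E > V(r_*)$ the level set $\{\tfrac12 p^2 + V(r) = E\}$ is a smooth compact curve containing no equilibrium of the planar system $(\dot{r}, \dot{p}) = (p, -V'(r))$ --- the only equilibrium $(r_*, 0)$ sits at the strictly lower energy $V(r_*)$ --- and a compact orbit free of equilibria is necessarily periodic.
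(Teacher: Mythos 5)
Your proof is correct and follows essentially the same route as the paper: both exploit the Hamiltonian structure with the conserved energy $\tfrac12\dot r^2 + \tfrac{1}{2r^2} + 2\lambda\ln r$, deduce confinement to a compact interval of $(0,\infty)$ from the divergence of the potential at both ends (hence global existence), and obtain periodicity from the fact that the orbit lies on a compact level curve of the Hamiltonian surrounding the unique equilibrium, with the period given by the same quadrature integral. The only cosmetic difference is that the paper first rescales to normalize $2\lambda$ to $1$ and derives slightly more explicit confinement bounds via $\ln x \le x-1$.
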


We can now properly define the breathers in dimension $d=1$.

\begin{prop}[Breathers for logNLS in dimension $1$] \label{prop_existence_breathers_d=1}
    For any $\alpha \in \mathbb{C}^+$, set
    \begin{equation*}
        u^{\alpha} (t,x) \coloneqq \sqrt{\frac{\alpha_r}{r_\alpha (t)}} \exp \Bigl[ \frac{1}{2} - i \Phi^\alpha (t) - \frac{x^2}{2 r_\alpha (t)^2} + i \frac{\dot{r}_\alpha (t)}{r_\alpha (t)} \frac{x^2}{2} \Bigr], \qquad t,x \in \mathbb{R},
    \end{equation*}
    where
    \begin{equation*}
        \Phi^\alpha (t) \coloneqq \frac{1}{2} \int_0^t \frac{1}{r_\alpha (s)^2} \diff s + \lambda \int_0^t \ln \frac{r_\alpha (s)}{\alpha_0} \diff s - \lambda t.
    \end{equation*}
    Then $u^{\alpha}$ is solution to \eqref{foc_log_nls} in dimension $d = 1$.
\end{prop}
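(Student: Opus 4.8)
The strategy is a direct verification of the Gaussian ansatz. The plan is to substitute $u^\alpha$ into \eqref{foc_log_nls} (in dimension $d=1$, so $\Delta = \partial_x^2$) and to show that the equation collapses onto the ODE of Proposition \ref{prop_ODE_r} together with the defining relation for $\Phi^\alpha$. First I would repackage the quadratic phase and the Gaussian envelope into a single complex coefficient by setting
\begin{equation*}
    A(t) \coloneqq \frac{1}{r_\alpha(t)^2} - i\,\frac{\dot r_\alpha(t)}{r_\alpha(t)}, \qquad b(t) \coloneqq \sqrt{\frac{\alpha_r}{r_\alpha(t)}}\,e^{-i\Phi^\alpha(t)},
\end{equation*}
so that $u^\alpha(t,x) = b(t)\exp\bigl[\tfrac12 - \tfrac12 A(t)x^2\bigr]$ with $\Re A = r_\alpha^{-2} > 0$. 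The gain of this rewriting is that $A$ carries all the $x$-dependence while $b$ is purely temporal; since $u^\alpha$ never vanishes, the equation will reduce, after division by $u^\alpha$, to a polynomial identity of degree $2$ in $x$ that must hold for every $x$.

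The three ingredients are then elementary: differentiation gives $\partial_t u^\alpha = u^\alpha\bigl(\dot b/b - \tfrac12\dot A\,x^2\bigr)$ and $\partial_x^2 u^\alpha = u^\alpha\,(A^2 x^2 - A)$, while $|u^\alpha|^2 = |b|^2\exp[1 - (\Re A)x^2]$ yields $\ln|u^\alpha|^2 = 2\ln|b| + 1 - r_\alpha^{-2}x^2$ (here I use that $\tfrac12$ and $\Phi^\alpha$ are real). Substituting into \eqref{foc_log_nls}, dividing by $u^\alpha$, and matching the coefficients of $x^2$ and of $x^0$ separately, the claim becomes equivalent to the two scalar relations
\begin{gather*}
    -\tfrac{i}{2}\,\dot A + \tfrac12 A^2 - \frac{\lambda}{r_\alpha^2} = 0,
    \\
    i\,\frac{\dot b}{b} - \tfrac12 A + \lambda\bigl(2\ln|b| + 1\bigr) = 0.
\end{gather*}

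For the first relation I would expand, using the definition of $A$, the quantities $A^2 = r_\alpha^{-4} - 2i\,\dot r_\alpha r_\alpha^{-3} - \dot r_\alpha^2 r_\alpha^{-2}$ and $\dot A = -2\dot r_\alpha r_\alpha^{-3} - i\,\ddot r_\alpha r_\alpha^{-1} + i\,\dot r_\alpha^2 r_\alpha^{-2}$. The imaginary parts cancel (the terms in $\dot r_\alpha r_\alpha^{-3}$), and the real part collapses to $-\tfrac12\,\ddot r_\alpha/r_\alpha + \tfrac12 r_\alpha^{-4} - \lambda r_\alpha^{-2}$; multiplying by $-2r_\alpha$ returns exactly $\ddot r_\alpha = r_\alpha^{-3} - 2\lambda\, r_\alpha^{-1}$, which holds by Proposition \ref{prop_ODE_r}. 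For the second relation, $\ln b = \tfrac12\ln(\alpha_r/r_\alpha) - i\Phi^\alpha$ gives $\dot b/b = -\tfrac12\,\dot r_\alpha/r_\alpha - i\,\dot\Phi^\alpha$ and $2\ln|b| = \ln(\alpha_r/r_\alpha)$; once more the imaginary parts (now in $\dot r_\alpha/(2r_\alpha)$) cancel, leaving $\dot\Phi^\alpha = \tfrac12 r_\alpha^{-2} + \lambda\ln(r_\alpha/\alpha_r) - \lambda$, which is precisely the time derivative of the prescribed $\Phi^\alpha$ (consistent with $\Phi^\alpha(0)=0$). This establishes that $u^\alpha$ solves \eqref{foc_log_nls}.

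The whole argument is a computation, so there is no deep obstacle; the only care needed lies in the two imaginary-part cancellations, where a misplaced sign in $\dot A$ or $A^2$ would break the reduction. I would therefore double-check those two expansions, and also note that matching coefficients is legitimate precisely because the post-division identity is a polynomial in $x$ required to vanish identically.
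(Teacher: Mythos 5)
Your verification is correct and follows essentially the same route as the paper, which plugs the Gaussian ansatz $b(t)\exp(\tfrac12-\tfrac12\mu(t)x^2)$ into the equation, matches the $x^2$ and $x^0$ coefficients to get $-i\dot\mu+\mu^2=2\lambda\mu_r$ together with the phase equation for $b$, and then reduces to $\ddot r = r^{-3}-2\lambda r^{-1}$ via the substitution $\mu = r^{-2}-i\dot r/r$ (your $A$); the paper merely delegates these computations to the cited references rather than writing them out. Note only that you correctly read the $\alpha_0$ in the stated $\Phi^\alpha$ as a typo for $\alpha_r$, consistent with the formula for $\phi^\alpha$ in Section 2.1.
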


We emphasize that these solutions to \eqref{foc_log_nls} are periodic in time up to a time-depending complex number of modulus 1; to be more precise, $u^\alpha \exp{\Bigl[ - i \Phi^\alpha (t) \Bigr]}$ (with $\Phi^\alpha (t)$ real) is periodic. Therefore, excluding the case $\alpha = (2 \lambda)^{- \frac{1}{2}}$ which is the Gausson, this explains why we call those solutions \emph{breathers}. It is worth pointing out that other solutions to \eqref{foc_log_nls} in dimension $1$, to be more precise
\begin{equation*}
    G_{\omega, x_0} (t,x) \coloneqq \exp{\Bigl[ - 2 i t \lambda \omega + \omega - \lambda (x-x_0)^2 \Bigr]}, \qquad t,x \in \mathbb{R},
\end{equation*}
for any $\omega, x_0 \in \mathbb{R}$, are known to be periodic in time, but their shape does not evolve ($\abs{G_{\omega, x_0}}$ is independent of time) contrary to these breathers.

From these results, the behaviour of these breathers and their shapes are in complete correlation with the knowledge of $r_\alpha$ and its evolution in time.
Therefore, it is interesting to study $r_\alpha$ more deeply. In particular, we can study its period, more precisely its evolution with respect to the initial data of $r_\alpha$. Indeed, it cannot be easily computed (its expression is rather complex, with an integral which cannot be explicitly computed), and one can wonder what regularity with respect to the parameters it has. Still, some approximations are available in two cases: the case of small oscillations (around $(2 \lambda)^{-\frac{1}{2}}$) and the case of "big" oscillations. An interesting feature in the latter was found as it appears that the period can become very large. We also postpone a discussion about the behaviour of $u^\alpha$ in this case in Section \ref{subsec_discussion_sol}.

\begin{theorem} \label{th_period_r}
    The period $T_\alpha$ of $r_\alpha$ is continuous with respect to $\alpha$. In addition, it satisfies $T_\alpha \longrightarrow \frac{\pi}{\sqrt{2} \, \lambda}$ when $\alpha \rightarrow (2 \lambda)^{-\frac{1}{2}}$ and $T_\alpha \sim \sqrt{\frac{\pi}{\lambda}} \, \alpha_r \exp{\Bigl[ \frac{\alpha_i^2}{4 \lambda} + \frac{1}{4 \lambda \alpha_r^2}} \Bigr]$ when $\alpha \rightarrow \infty$ or $\alpha_r \rightarrow 0$.
\end{theorem}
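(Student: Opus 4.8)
The plan is to treat the equation of Proposition~\ref{prop_ODE_r} as a one-dimensional conservative system and to read the period off the effective potential. Multiplying $\ddot r_\alpha = r_\alpha^{-3} - 2\lambda\, r_\alpha^{-1}$ by $\dot r_\alpha$ and integrating shows that
\[
E_\alpha \coloneqq \tfrac12 \dot r_\alpha(t)^2 + V(r_\alpha(t)), \qquad V(r) \coloneqq \frac{1}{2r^2} + 2\lambda \ln r,
\]
is conserved, with $E_\alpha = \tfrac12 \alpha_i^2 + \tfrac{1}{2\alpha_r^2} + 2\lambda \ln \alpha_r$ from the initial conditions. On $(0,\infty)$ the function $V$ is smooth, has the single critical point $r_\ast = (2\lambda)^{-1/2}$ where $V'(r_\ast)=0$ and $V''(r_\ast)>0$, is strictly decreasing on $(0,r_\ast]$, strictly increasing on $[r_\ast,\infty)$, and tends to $+\infty$ as $r\to 0^+$ and as $r\to\infty$. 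Hence it is a single potential well, and for $\alpha\neq(2\lambda)^{-1/2}$ the orbit oscillates between the two turning points $0<r_-(\alpha)<r_\ast<r_+(\alpha)$ determined by $V(r_\pm)=E_\alpha$. The usual computation $dt = dr/\sqrt{2(E_\alpha - V(r))}$ over a half-period then gives
\[
T_\alpha = \sqrt2 \int_{r_-(\alpha)}^{r_+(\alpha)} \frac{dr}{\sqrt{E_\alpha - V(r)}}.
\]

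For the continuity of $\alpha \mapsto T_\alpha$, I would note that $\alpha\mapsto E_\alpha$ is continuous and, by strict monotonicity of $V$ on each side of $r_\ast$ together with $V'(r_\pm)\neq0$, the turning points depend continuously on $E_\alpha$. The integrand has integrable square-root singularities at $r_\pm$, which I would regularize by the substitution $r = \tfrac12(r_++r_-) + \tfrac12(r_+-r_-)\sin\phi$, $\phi\in[-\tfrac\pi2,\tfrac\pi2]$: a Taylor expansion of $V$ at each turning point shows the transformed integrand is continuous and bounded uniformly for $\alpha$ in a neighbourhood, so dominated convergence yields continuity. The small-oscillation limit $\alpha\to(2\lambda)^{-1/2}$ is the harmonic regime $E_\alpha\to V(r_\ast)$: the same substitution together with $V(r)\approx V(r_\ast)+\tfrac12 V''(r_\ast)(r-r_\ast)^2$ gives $T_\alpha\to 2\pi/\sqrt{V''(r_\ast)}$, and since $V''(r)=3r^{-4}-2\lambda r^{-2}$ yields $V''(r_\ast)=8\lambda^2$, this equals $2\pi/(2\sqrt2\,\lambda)=\pi/(\sqrt2\,\lambda)$, as claimed.

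The heart of the theorem, and where I expect the real work to lie, is the large-oscillation asymptotics. Within $\mathbb{C}^+$ the conditions $\alpha\to\infty$ and $\alpha_r\to 0^+$ are together equivalent to $E_\alpha\to+\infty$. For large $E_\alpha$ one has $r_+(\alpha)\sim\exp(E_\alpha/(2\lambda))$ while $r_-(\alpha)\sim(2E_\alpha)^{-1/2}\to0$, and the period integral is dominated by the region $r\approx r_+$. The change of variables $r=e^u$ turns the integral into $\sqrt2\int e^u\,(E_\alpha-\tfrac12 e^{-2u}-2\lambda u)^{-1/2}\,du$, whose $e^u$ weight concentrates all the mass at the upper endpoint; setting $s=E_\alpha-2\lambda u$ and discarding the exponentially small $\tfrac12 e^{-2u}$ term reduces the leading behaviour to a Laplace/Watson integral,
\[
T_\alpha \sim \frac{\sqrt2}{2\lambda}\, e^{E_\alpha/(2\lambda)} \int_0^\infty \frac{e^{-s/(2\lambda)}}{\sqrt s}\,ds = \frac{\sqrt2}{2\lambda}\, e^{E_\alpha/(2\lambda)}\,\sqrt{2\lambda}\,\Gamma(\tfrac12) = \sqrt{\frac{\pi}{\lambda}}\; e^{E_\alpha/(2\lambda)}.
\]
Substituting $E_\alpha/(2\lambda)=\ln\alpha_r + \alpha_i^2/(4\lambda) + 1/(4\lambda\alpha_r^2)$ reproduces exactly the stated equivalent.

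The main obstacle is to make this last step rigorous. I would need to show that the contribution of the range $r\in[r_-(\alpha),R_0]$ for a fixed $R_0$, including the integrable singularity at $r_-$, grows only sub-exponentially in $E_\alpha$ and is therefore negligible against $e^{E_\alpha/(2\lambda)}$; and I must control the two approximations made near $r_+$, namely dropping the $\tfrac12 e^{-2u}$ term and extending the $s$-integration to $+\infty$. A uniform estimate of the form $E_\alpha - V(r)\gtrsim E_\alpha - 2\lambda\ln r$ on the relevant range, combined with Watson's lemma applied with an explicit remainder, should close the argument and upgrade the formal computation to the asymptotic equivalence stated in the theorem.
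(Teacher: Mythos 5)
Your setup is the same as the paper's (after its rescaling $\tau_\gamma(t)=\sqrt{2\lambda}\,r_\alpha(t/2\lambda)$, which removes the factor $2\lambda$): the conserved Hamiltonian, the single potential well, the period integral $T_\alpha=\sqrt2\int_{r_-}^{r_+}(E_\alpha-V(r))^{-1/2}\,dr$, continuity via a substitution that absorbs the square-root singularities at the turning points (your $\sin\phi$ change of variables plays the role of the paper's affine rescalings to $[0,1]$ on either side of the well's minimum, with a $1/\sqrt{1-x}$ dominating function), and the harmonic limit $2\pi/\sqrt{V''(r_*)}=\pi/(\sqrt2\,\lambda)$ for small oscillations. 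All of that is sound, and your formal Laplace computation does land on the correct equivalent.

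The genuine gap is that the large-oscillation asymptotics — the substantive claim of the theorem — is not actually proved: you derive the candidate leading term and then list the three error terms that would have to be controlled (the inner range $[r_-,R_0]$, the dropped $\tfrac12 e^{-2u}$ term, and the extension of the $s$-integral to $+\infty$), ending with ``should close the argument.'' An asymptotic \emph{equivalence} requires matching upper and lower bounds, and none is supplied. For comparison, the paper closes exactly these points with elementary explicit inequalities: the inner piece is bounded by $\gamma_-/2\to 0$ (using $\ln y\le y-1$ to compare the integrand with $\bigl((y+1)/y^2-2\gamma_-^2\bigr)(y-1)$); the lower bound near $r_+$ comes from $E_\gamma-x^{-2}\le 2\ln\gamma_+$, reducing to $\gamma_+\int_{1/\gamma_+}^1(-2\ln y)^{-1/2}\,dy\to\gamma_+\sqrt{\pi/2}$; and the upper bound comes from the convexity estimate $e^{-2y}\le 1-\tfrac{y}{y_+}(1-e^{-2y_+})$ on $[0,y_+]$, which linearizes the denominator from above and yields the same constant $\sqrt{\pi/2}\,e^{E_\gamma/2}$ after the Gaussian substitution. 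Your proposed route via Watson's lemma with explicit remainder is workable, but as written the two-sided control that turns the formal computation into $T_\alpha\sim\sqrt{\pi/\lambda}\,\alpha_r\exp\bigl[\alpha_i^2/(4\lambda)+1/(4\lambda\alpha_r^2)\bigr]$ is missing, and it is precisely where the work of the proof lies.
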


Those breathers can also be generalized in higher dimension $d \geq 2$.
Indeed, as already noticed in \cite{nonlin_wave_mec}, if the Gaussian initial data \eqref{gauss_in_data} is such that $\Re A_\textnormal{in}$ and $\Im A_\textnormal{in}$ commute, then $\Re A (t)$ and $\Im A (t)$ commute for any $t \in \mathbb{R}$ (since $[ \Re A (t), \Im A (t) ]$ is constant) and can be orthogonally co-diagonalized by the same time-independent orthogonal basis.
To this aim, define the orthogonal group $\mathcal{O}_d (\mathbb{R})$ and set
\begin{gather*}
    S_d (\mathbb{C})^{\Re +} \coloneqq \{ A \in S_d (\mathbb{C}); \Re A \textnormal{ is positive definite } \}, \\
    S_d (\mathbb{C})^{\Re ++} \coloneqq \{ A \in S_d (\mathbb{C})^{\Re +}; \Re A \textnormal{ and } \Im A \textnormal{ commute} \}.
\end{gather*}

\begin{prop}[Characterization of breathers for logNLS] \label{prop_existence_breathers}
    Let $d \in \mathbb{N}^*$ and \\ $A \in S_d (\mathbb{C})^{\Re ++}$.
    Set
    \begin{equation*}
        u^A_\textnormal{in} (x) = \exp \Bigl[ \frac{d}{2} - x^\top A x \Bigr],
        \qquad x \in \mathbb{R}^d,
    \end{equation*}
    and $u^A$ the solution to \eqref{foc_log_nls} with initial data $u^A_\textnormal{in}$.

    Then there exists $R \in \mathcal{O}_d (\mathbb{R})$ and $\alpha_1, \dots, \alpha_d \in \mathbb{C}^+$ such that for all $x \in \mathbb{R}^d$,
    \begin{equation*}
        u^A (t, Rx) = u^{\alpha_1} (t, x_1) \, u^{\alpha_2} (t, x_2) \dots u^{\alpha_d} (t, x_d).
    \end{equation*}
\end{prop}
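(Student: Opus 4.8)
The plan is to exploit two structural features of \eqref{foc_log_nls}: its invariance under orthogonal changes of the space variable, and the separability of noninteracting systems established in \cite{nonlin_wave_mec}. First I would use that $\Re A$ and $\Im A$ are real symmetric matrices which commute by hypothesis, hence are simultaneously orthogonally diagonalizable: there exists $R \in \mathcal{O}_d (\mathbb{R})$ with $R^\top A R = D = \operatorname{diag}(\mu_1, \dots, \mu_d)$, $\mu_j \in \mathbb{C}$. Since $R^\top (\Re A) R$ is diagonal and positive definite (because $\Re A$ is), each $\Re \mu_j > 0$, i.e. $\mu_j \in \mathbb{C}^+$.

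Next I would observe that $v(t,x) \coloneqq u^A (t, Rx)$ again solves \eqref{foc_log_nls}: the Laplacian is invariant under $x \mapsto Rx$ (from $R R^\top = \mathrm{Id}$ and cyclicity of the trace, $\Delta_x (u \circ R) = \operatorname{tr}(R^\top (\nabla^2 u)(Rx)\, R) = (\Delta u)(Rx)$), while the nonlinearity depends only on $\lvert u \rvert$. Its initial datum then factorizes,
\begin{equation*}
    v(0,x) = \exp \Bigl[ \tfrac{d}{2} - x^\top D x \Bigr] = \prod_{j=1}^d \exp \Bigl[ \tfrac{1}{2} - \mu_j x_j^2 \Bigr].
\end{equation*}

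Then I would match each one-dimensional factor with a breather initial datum. Evaluating the breather of Proposition \ref{prop_existence_breathers_d=1} at $t=0$ gives $u^{\alpha}(0,y) = \exp[\tfrac{1}{2} - (\tfrac{1}{2\alpha_r^2} - i \tfrac{\alpha_i}{2\alpha_r}) y^2]$, so the elementary system $\Re \mu_j = \tfrac{1}{2 \alpha_{j,r}^2}$, $\Im \mu_j = -\tfrac{\alpha_{j,i}}{2 \alpha_{j,r}}$ is solved by $\alpha_{j,r} = (2 \Re \mu_j)^{-1/2} > 0$ and $\alpha_{j,i} = -2 \alpha_{j,r} \Im \mu_j$, producing $\alpha_j \in \mathbb{C}^+$ with $u^{\alpha_j}(0, \cdot) = \exp[\tfrac{1}{2} - \mu_j (\cdot)^2]$. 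Applying the separability property of \cite{nonlin_wave_mec} to the factorized datum of $v$, the solution remains a tensor product, $v(t,x) = \prod_j w_j (t, x_j)$, where $w_j$ solves the one-dimensional equation with datum $\exp[\tfrac{1}{2} - \mu_j x_j^2]$; by the uniqueness recalled above, $w_j = u^{\alpha_j}$. This yields $u^A (t, Rx) = \prod_j u^{\alpha_j}(t, x_j)$, as claimed.

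I expect no serious obstacle, the argument being an assembly of structural facts. The point requiring care is that the separability property is stated for a fixed splitting of coordinates, so it may only be invoked once the orthogonal change $x \mapsto Rx$ has aligned the axes with the common eigenbasis of $\Re A$ and $\Im A$; verifying that this change preserves \eqref{foc_log_nls} and produces a fully factorized initial datum is therefore the crux, together with the surjectivity of the breather-to-Gaussian parameter map onto $\mathbb{C}^+$ handled above. A purely hands-on alternative, diagonalizing $A(t)$ at every time using the preservation of $[\Re A(t), \Im A(t)]$ noted before the statement, would instead require solving the matrix ODE and is less economical.
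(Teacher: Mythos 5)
Your proposal is correct and follows essentially the same route as the paper's proof: simultaneous orthogonal diagonalization of $\Re A$ and $\Im A$ (the paper isolates this as Lemma \ref{lem_M_d_C+}), invariance of \eqref{foc_log_nls} under the orthogonal change of variables, factorization of the rotated initial datum into one-dimensional Gaussians matched to breather parameters $\alpha_j \in \mathbb{C}^+$, and conclusion by separability plus uniqueness of the solution in $\mathcal{C}_b(\mathbb{R}, W(\mathbb{R}^d))$. Your explicit parameter map $\alpha_{j,r} = (2\Re\mu_j)^{-1/2}$, $\alpha_{j,i} = -2\alpha_{j,r}\Im\mu_j$ is the correct inversion of the breather-to-Gaussian correspondence.
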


However, in dimension $d \geq 2$, those generalized breathers are only a particular case among the more generalized class of Gaussian solutions of the form \eqref{gen_gauss_sol}. It has also already been pointed out (in \cite{nonlin_wave_mec}) that the positive definiteness of $\Re A (t)$ is preserved and the oscillations are bounded in amplitude. Even more, the spectrum of $\Re A (t)$ can be bounded by below by a positive time-independent constant $a_\textnormal{inf}$ but also by above, and along with the relation between $\Re A (t)$ and $\abs{b (t)}$,
\begin{equation*}
    \abs{b (t)} = \biggl( \frac{\det{\Re A (0)}}{\det{\Re A (t)}} \biggr)^\frac{1}{4},
\end{equation*}
we see that some features of the breathers remain true even though the periodicity is lost.

We denote those Gaussian solutions to \eqref{foc_log_nls} by taking care of the invariants and the scaling factor: for any $A_\textnormal{in} \in S_d (\mathbb{C})^{\Re +}$, $x_0, v \in \mathbb{R}^d$, $\omega, \theta \in \mathbb{R}$, we set
\begin{gather}
    G_{A_\textnormal{in}, \omega, x_0, v, \theta}^d (t,x) \coloneqq \exp \left[ i \left( \theta + 2 \lambda \omega t - v \cdot x + \frac{\abs{v}^2}{2} t \right) + \omega \right] \, u^{A_\textnormal{in}} (t, x - x_0 - v t), \label{eq:by-products_breathers}
\end{gather}
where $u^{A_\textnormal{in}}$ is the solution to \eqref{foc_log_nls} with initial data \eqref{gauss_in_data}.

\subsubsection{Nonlinear superposition}

In the context of solitons or breathers for a non-linear dispersive equation, an important question is the understanding of the interactions between them for an initial data made of the sum of several decoupled solitons or breathers. The qualitative information which come from this study should allow to better understand the dynamics and behaviours induced by the equation and its flow.
One of the usual related topics is the problem of the existence of multi-soliton solution, i.e. a solution which converges (in some sense) to the sum of solitons when $t \rightarrow \infty$, and of its stability in order to investigate whether or not they are generic objects for the dynamics of this equation.

The inverse scattering transform method was the first method used to construct multi-solitons for NLS equation \cite{Zakharov_Shabat__multisoliton_NLS_IST}. However, such a method is restricted to equations which are completely integrable (like the Korteweg-de Vries equation and the cubic nonlinear Schrödinger equation in dimension 1). Another method, using \emph{energy techniques}, meaning that it relies on the use of the second variation of the energy as a Lyapunov functional to control the difference of a solution with the soliton sum, appeared in the early 2000 \cite{Martel_Merle_Tsai__Stability_Multisolitons_gKdV} and has been used a lot since (e.g. \cite{Martel_Merle__Multi_solitary_waves_NLS, Martel_Merle_Tsai__Stability_multisoliton_NLS, Cote_Martel_Merle_Construction_multisoliton_gKdV_NLS, Cote_LeCoz_Multisolitons_NLS}).

Known results on the question of stability of multisolitary wave solutions are based on
asymptotic stability, which means that the solution converges (in some sense) as $t \rightarrow \infty$ to the sum of several solitary waves. It is relatively natural to expect that, at least when the interactions are local and the composing solitons are exponentially decaying at infinity, a multi-soliton will be orbitally stable if all the composing solitons are orbitally stable.
For some equations, like the generalized Korteweg–de Vries (gKdV) and the nonlinear Schrödinger equation (NLS), the orbital stability in $H^1$ of this sum has been shown under an assumption of flatness of the nonlinearity, and a sufficient relative speed between the solitons in the case of NLS (see for instance \cite{Martel_Merle_Tsai__Stability_Multisolitons_gKdV, Martel_Merle_Tsai__Stability_multisoliton_NLS} and the references in there).
For instance, the KdV equation is also known for having special explicit solutions, called $N$-solitons ($N \geq 2$), corresponding to the superposition of $N$ traveling waves with different speeds that interact and then remain unchanged after interaction and behaving asymptotically in large time as the sum of $N$ traveling waves (see \cite{Miura__KdV_survey}). Those $N$-solitons are also orbitally stable in $H^1$ with a more precise description of the asymptotic stability (see \cite{Martel_Merle_Tsai__Stability_Multisolitons_gKdV}).

On the other hand, as soon as one of the solitons of the sum is unstable, the multi-soliton constructed for this sum is expected to be unstable. Such a result has been proved by R. Côte and S. Le Coz \cite{Cote_LeCoz_Multisolitons_NLS} for the subcritical NLS equation with instability in $H^1$. We can also cite \cite{Chang_al__spectra_lin_op_soliton_NLS, Grillakis__Nodal_sol_semilin, Mizumachi__instability_vortex_2D_NLS} for partial results in the $L^2$-subcritical case and \cite{Grillakis__lin_instab_NLS_KG, Jones__instab_standing_wave_NLS, Mizumachi__insta_bound_states_2D_NLS, Mizumachi__vortex_2D_NLS} for results on instability with a supercritical nonlinearity.

However, even if it is unstable, a multi-soliton may still exist for this sum with (at least) one unstable soliton as soon as the relative speed is large enough (\cite{Cote_LeCoz_Multisolitons_NLS}). Thus, even though this multi-soliton should be unstable, a solution with initial data close to this multi-soliton (for instance equal to the sum of solitons) will remain close to it for some time, which should be long if the solitons in the sum get away from each other, since the interactions between them become smaller and smaller as the distance between the solitons increases (remember that the solitons vanish at infinity, and often decrease exponentially). Such a study can even be generalized to a sum of \emph{standing} solitons (meaning that the minimum relative speed is $0$), as long as they are far away from each other.

One can also wonder if breathers are stable, or also if multi-breathers exist and are stable too. Indeed, some breathers are known to be stable, for instance in $H^1$ for the mKdV equation \cite{alejo_munoz__mkdv_breather_stability_H2, Alejo_Munoz__stab_breathers_MKdV}. The question of existence and stability of multi-breathers for mKdV have also been answered in these articles (see also \cite{Chen_Liu:mKdV_soliton_resol}). Alternatively, the previous problem of \emph{nonlinear superposition} (i.e. how long a solution with a sum of breathers as initial data will remain close to this sum) seems an interesting first question in this way.

For \eqref{foc_log_nls}, we have a large class of Gaussian functions solution which includes solitons (Gaussons) and breathers. However, the only thing we know about them is that the Gausson is orbitally stable. To be able to understand further the behaviour of this equation, some numerical methods have been developed by W. Bao, R. Carles, C. Su and Q. Tang \cite{Bao_Carles_al__error_est, Bao_Carles_al__reg_num_logNLS}. In the latter, some numerical simulations have been performed and very interesting and new features have been found.
For instance, some of the behaviours found in these simulations along with the orbital stability of the Gausson suggest not only the existence of multi-solitons \cite{Ferriere__existence_multi_solitons_logNLS}, 
but also their stability.

But we will mostly focus in particular on the first two simulations in Fig. 4.5 of this article, who falls into our study with $\lambda = 1$. Beginning with two Gaussons whose distance from each other is $10$ (for the first simulation) or $6$ (for the second), the behaviour of the solution is rather different. For the former, it gives the impression that the interactions between the two Gaussons are so small that nothing seems to happen: the numerical solution has the same form at any time as the initial data and remains almost constant for a very large time. On the other hand, for the latter, the interactions between the Gaussons make them move closer from each other, very slowly at first but then faster and faster, until they cross each other at time $t=13.6$ without (almost) any change in their form, except two little structures which go to infinity on both sides. From this moment, we observe an almost periodic behaviour: the two "Gaussons" oscillate, crossing each other almost regularly, whereas some other little structures appear (less and less regularly) and go to infinity.

Therefore, a huge change in the behaviour of the solution is seen from a small change in the distance between the two Gaussons. In particular, the first simulation seems to show that the structure is rather stable, which is very surprising since no standing multi-solitons have been proved to be stable or even exist for any equation (to the best of our knowledge). Yet, the simulation may not reflect the real solution for large times, thus we will simply say that the solution remain close to the sum of the two Gaussons for a large time.

The main result of this paper is a partial result about this observation, with an estimate of the $L^2$ distance between the sum of solitons/breathers/Gaussian solutions of \eqref{foc_log_nls} and the solution of \eqref{foc_log_nls} with the previous sum as initial data.

\begin{theorem}[Nonlinear superposition principle for logNLS] \label{th_stability_stand_multi_sol}
    Let $d \in \mathbb{N}^*$. There exists $C_d > 0$ such that the following holds.
    Consider $\lambda>0$, $N \in \mathbb{N}^*$ and take $x_k \in \mathbb{R}^d$, $A_\textnormal{in}^k \in S_d (\mathbb{C})^{\Re +}$, $\omega_k \in \mathbb{R}$ and $\theta_k \in \mathbb{R}$ for $k = 1, \dots, N$ and $v \in \mathbb{R}$. Let $u$ the solution to \eqref{foc_log_nls} with initial data $u_{\textnormal{in}} (x) \coloneqq \sum G^d_{A_\textnormal{in}^k, \omega_k, x_k, v, \theta_k} (0,x)$ for any $x \in \mathbb{R}^d$. Define $A_k (t)$ provided by Proposition \ref{prop_expression_general_gaussian} for each $A_k^\textnormal{in}$ and set $G (t) \coloneqq \sum G^d_{A_k, \omega_k, x_k, v, \theta_k} (t)$ and
    \begin{equation*}
        \tau_- \coloneqq \inf_{t,k,j} \sigma (\Re A_k (t)), \qquad \tau_+ \coloneqq \sup_{t,k,j} \sigma (\Re A_k (t)),
    \end{equation*}
    where $\sigma (M)$ designates the spectrum of a matrix $M$.
    
    Then $0 < \tau_- \leq \tau_+ < \infty$ and there exist $\varepsilon_0 > 0$ depending only on $\delta \omega \coloneqq \max\limits_k |\omega_k - \omega_{k+1}|$, $\tau_-$, $\tau_+$ and $N$ such that if
    \begin{equation*}
        \varepsilon \coloneqq \left( \min\limits_{k} |x_{k+1} - x_k| \right)^{-1} < \varepsilon_0,
    \end{equation*}
    then for all $t\geq0$,
    \begin{equation*}
        \norm{u (t) - G(t)}_{L^2 (\mathbb{R}^d)} \leq C_d N^\frac{3}{2} \, \frac{\lambda \, \tau_+}{\varepsilon^{\frac{d}{2}+1} \sqrt{\tau_-}} \, \exp \left[ - \frac{\tau_-}{4 \varepsilon^2} + \max_j \omega_j + 2 \lambda t \right].
    \end{equation*}
\end{theorem}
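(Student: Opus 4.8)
The plan is to set $w:=u-G$ and to close a Gronwall estimate on $\norm{w(t)}_{L^2}$. The decisive simplification is that the initial data coincide, $u(0)=u_\textnormal{in}=G(0)$, so $w(0)=0$ and the entire bound must come from the defect of $G$ as an approximate solution. Since each $G_k:=G^d_{A_k,\omega_k,x_k,v,\theta_k}$ solves \eqref{foc_log_nls} exactly, their sum $G=\sum_k G_k$ satisfies
\begin{equation*}
    i\,\partial_t G+\frac12\Delta G+\lambda\,G\ln\abs{G}^2=F,\qquad F:=\lambda\sum_k G_k\ln\frac{\abs{G}^2}{\abs{G_k}^2}.
\end{equation*}
Subtracting this from \eqref{foc_log_nls} gives, with $f(z):=z\ln\abs{z}^2$, the equation $i\,\partial_t w+\frac12\Delta w+\lambda(f(u)-f(G))=-F$.

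First I would run the standard $L^2$ energy estimate: pairing with $\overline{w}$ and taking imaginary parts, the Laplacian term drops and $\frac12\frac{d}{dt}\norm{w}_{L^2}^2=-\lambda\,\Im\int(f(u)-f(G))\overline{w}\,\diff x-\Im\int F\overline{w}\,\diff x$. The first term is controlled by the one-sided Lipschitz inequality for the logarithmic nonlinearity, $\bigl|\Im[(f(z_1)-f(z_2))\overline{(z_1-z_2)}]\bigr|\le 2\abs{z_1-z_2}^2$, which bounds it by $2\lambda\norm{w}_{L^2}^2$ and is exactly what produces the factor $2\lambda$ in the exponent; Cauchy--Schwarz bounds the second by $\norm{F}_{L^2}\norm{w}_{L^2}$. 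Dividing by $\norm{w}_{L^2}$ yields $\frac{d}{dt}\norm{w}_{L^2}\le 2\lambda\norm{w}_{L^2}+\norm{F}_{L^2}$, and since $w(0)=0$, Gronwall gives $\norm{w(t)}_{L^2}\le e^{2\lambda t}\int_0^t e^{-2\lambda s}\norm{F(s)}_{L^2}\,\diff s$. In particular, if $\norm{F(s)}_{L^2}\le M$ for all $s$, then $\norm{w(t)}_{L^2}\le \frac{M}{2\lambda}e^{2\lambda t}$, so everything reduces to a uniform-in-time bound on the source $F$.

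The core of the argument is therefore the estimate of $\norm{F(s)}_{L^2}$, and two structural facts make it uniform in time. First, all the $G_k$ are transported at the common velocity $v$, so the centres $x_k+vt$ keep their mutual distances and $\min_k\abs{x_{k+1}-x_k}=\varepsilon^{-1}$ for every $t$. Second, by the boundedness of the oscillations of $\Re A_k(t)$ recalled above, the spectra stay in a fixed compact $[\tau_-,\tau_+]\subset(0,\infty)$ (this already gives $0<\tau_-\le\tau_+<\infty$) and $\abs{b_k(t)}=(\det\Re A_k(0)/\det\Re A_k(t))^{1/4}$ stays bounded, so each $\abs{G_k(t,\cdot)}$ is a Gaussian of uniformly controlled height and width, decaying like $e^{-\tau_-\abs{x-x_k-vt}^2}$. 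The key observation is that at any point all but at most one of the Gaussians lie deep in their tails, so $\abs{F}$ is dominated pointwise by a sum over pairs of products of two well-separated Gaussians: near the $k$-th centre one has $\ln\frac{\abs{G}^2}{\abs{G_k}^2}\approx 2\,\Re\sum_{j\ne k}G_j/G_k$, whence $\abs{G_k\ln(\cdots)}\lesssim\sum_{j\ne k}\abs{G_j}$, while away from that centre the prefactor $\abs{G_k}$ is itself exponentially small, and in all regimes the contribution is controlled by $\sqrt{\abs{G_k}\abs{G_j}}$. This leads to $\norm{F}_{L^2}\lesssim\lambda\sum_{k\ne j}\bigl(\int\abs{G_k}\abs{G_j}\,\diff x\bigr)^{1/2}$, where each Gaussian overlap integral evaluates, after completing the square, to $e^{\omega_k+\omega_j}$ times a polynomial factor times $e^{-c\abs{x_k-x_j}^2}$ with rate $c\ge\tau_-/2$. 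Inserting $\abs{x_k-x_j}\ge\varepsilon^{-1}$ and taking the square root produces the announced $e^{-\tau_-/(4\varepsilon^2)}$, the factor $e^{\max_j\omega_j}$, and a prefactor of the form $N^{3/2}\,\tau_+\,\varepsilon^{-d/2-1}/\sqrt{\tau_-}$, the precise powers coming from the overlap normalisations, the polynomial growth of the logarithmic factor in the tails, and the counting of the $O(N^2)$ pairs.

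The genuinely delicate step is this uniform control of $F$, i.e. of the logarithmic interaction, and it is where the specificity of the logarithmic nonlinearity bites. Unlike for an algebraic nonlinearity, $z\mapsto z\ln\abs{z}^2$ forces one to handle two opposite difficulties simultaneously: the logarithm diverges to $-\infty$ on the interference set where $G=\sum_k G_k$ nearly vanishes, and it grows quadratically in the far tails, where $\ln(\abs{G_j}/\abs{G_k})\sim\abs{x_j-x_k}^2$. Both must be absorbed into the Gaussian decay without spoiling either the exponential rate $\tau_-/(4\varepsilon^2)$ or the polynomial prefactor, which is what makes the careful partition of $\mathbb{R}^d$ into the regions dominated by each $G_k$, together with the exact evaluation of the overlap integrals, indispensable. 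By contrast, the Gronwall step and the logarithmic Lipschitz inequality are routine; one only needs to track the constant in the latter to obtain precisely the $2\lambda t$ in the exponent.
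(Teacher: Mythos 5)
Your reduction of the theorem to a uniform-in-time bound on the source term is exactly the paper's route: the same $L^2$ energy identity, the same use of the Cazenave--Haraux inequality $\abs{\Im[(f(z_1)-f(z_2))\overline{(z_1-z_2)}]}\le 2\abs{z_1-z_2}^2$ to produce the $2\lambda$ in the exponent, Cauchy--Schwarz on the remainder, and Gronwall from $w(0)=0$. That part is complete and correct. The issue is that the entire technical content of the paper's proof lies in the step you describe but do not carry out: the uniform $L^2$ bound on $F=\lambda\bigl(G\ln\abs{G}^2-\sum_k G_k\ln\abs{G_k}^2\bigr)$ (the paper's Lemma \ref{lem_diff_L_log_L_sum_gaussian}, whose proof occupies most of Section \ref{sec_nonlin_superp}). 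Your sketch of it rests on two assertions that are not established and, as stated, not correct. The linearization $\ln\frac{\abs{G}^2}{\abs{G_k}^2}\approx 2\Re\sum_{j\ne k}G_j/G_k$ is valid only where $\abs{\sum_{j\ne k}G_j}\ll\abs{G_k}$, which fails precisely on the transition regions between cells and on the interference set; and the claim that ``in all regimes the contribution is controlled by $\sqrt{\abs{G_k}\abs{G_j}}$'' is false without a polynomial correction: near the midpoint of $x_k$ and $x_j$, where $G$ may nearly vanish, one has $\abs{F}\sim\lambda\tau\abs{x_k-x_j}^2\,e^{\omega-\tau\abs{x_k-x_j}^2/4}$ while $\sqrt{\abs{G_k}\abs{G_j}}\sim e^{\omega-\tau\abs{x_k-x_j}^2/4}$, so the ratio grows like $\varepsilon^{-2}$. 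You correctly identify this as ``the genuinely delicate step,'' but identifying a difficulty is not the same as resolving it.

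The paper's resolution is a specific lemma you would need to supply: the ``almost Lipschitz'' estimate $\abs{F(\tilde z)-F(z)}\le\abs{z-\tilde z}\,\bigl[6-\ln\abs{z}^2\bigr]$ for $\abs{z},\abs{\tilde z}\le 1$, $z\ne 0$ (Lemma \ref{lem_lip_z_log_z}), which allows one to pay the logarithmic cost at the \emph{nonvanishing} reference point $z=g_k(x)$ rather than at the possibly vanishing $\tilde z=g(x)$, turning the singular factor into the harmless quadratic $\delta\omega_k+\lambda_+\abs{x-x_k}^2$. Combined with the Voronoi partition $I_k=\{x:\abs{x-x_k}\le\abs{x-x_j}\ \forall j\}$ and the Gaussian tail integrals of Lemma \ref{lem_int_error_gauss_2}, this yields the stated prefactor $\varepsilon^{-(d/2+1)}$ and, via $\ell^2$ summation of the $L^2(I_k)$ norms over $k$, the power $N^{3/2}$ (a crude triangle inequality over all $O(N^2)$ pairs, as in your sketch, would only give $N^2$). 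Without this lemma or an equivalent device handling the set where $G$ nearly cancels, the proof is incomplete.
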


\begin{rem}
    If the real and imaginary parts of $A_k$ commute, then Proposition \ref{prop_existence_breathers} applies and gives $\alpha_1^k, \dots, \alpha_d^k$ coming from $A_k$ for all $k = 1, \dots, N$. Hence, it is easy to prove that
    \begin{equation*}
        \tau_- = \frac{1}{2} \min_{t,k,j} r_{\alpha_j^k} (t)^{-2}, \qquad \tau_+ = \frac{1}{2} \max_{t,k,j} r_{\alpha_j^k} (t)^{-2}.
    \end{equation*}
    Furthermore, if all the $A_k$s are equal to $\lambda \, I_d$, then $\tau_- = \tau_+ = \lambda$ and the inequality becomes
    \begin{equation*}
        \norm{u (t) - \sum G^d_{A_k, \omega_k, x_k, v, \theta_k} (t)}_{L^2 (\mathbb{R}^d)} \leq C_d N^\frac{3}{2} \, \frac{\lambda^\frac{3}{2}}{\varepsilon^{\frac{d}{2}+1}} \, \exp \left[ - \frac{\lambda}{4 \varepsilon^2} + \max_j \omega_j + 2 \lambda t \right],
    \end{equation*}
    for all $t \geq 0$.
\end{rem}

This result gives an interesting time during which the solution $u$ remains close to $G \coloneqq \sum G^d_{A_k, \omega_k, x_k, v, \theta_k}$ when the minimum distance between the solitons/breathers/Gaussian solutions is large enough. Indeed, take $\delta > 0$ as small as we want. The previous result say that the inequality
\begin{equation*}
    \norm{u (t,.) - \sum G^d_{A_k, \omega_k, x_k, v, \theta_k} (t,.)}_{L^2 (\mathbb{R}^d)} \leq \delta
\end{equation*}
holds for all $t \in [0, t_\delta]$ where
\begin{equation*}
    t_\delta \coloneqq \frac{\tau_-}{8 \lambda \, \varepsilon^2} - \frac{\omega}{2 \lambda} + \frac{\frac{d}{2}+1}{2 \lambda} \ln{\varepsilon} + \frac{1}{2 \lambda} \ln{\biggl[ \frac{\delta \sqrt{\tau_-}}{C_d N^\frac{3}{2} \lambda \tau_+} \biggr]},
\end{equation*}
with $\omega \coloneqq \max_j \omega_j$, as soon as $\varepsilon$ is small enough. Thus, if we fix everything except $(x_k)_k$ and then take a sequence of family $(x_k^n)_{1 \leq k \leq N, n \in \mathbb{N}^*}$ such that
\begin{equation*}
    \varepsilon_n \coloneqq \left( \min\limits_{k} |x_{k+1}^n - x_k^n| \right)^{-1} \underset{n \rightarrow \infty}{\longrightarrow} 0,
\end{equation*}
the resulting $t_\delta^n$ can be expanded as
\begin{equation*}
    t_\delta^n = \frac{\tau_-}{8 \lambda \, \varepsilon_n^2} + \frac{\frac{d}{2}+1}{2 \lambda} \ln{\varepsilon_n} + O (1) \sim \frac{\tau_-}{8 \lambda \, \varepsilon_n^2}.
\end{equation*}
For instance, if all the $A_k$s are equal to $\lambda \, I_d$, then $\tau_- = \lambda$ and we get at first order
\begin{equation*}
    t_\delta^n \sim \frac{1}{8 \varepsilon_n^2}.
\end{equation*}
It is interesting to see that this time is in square of the minimal distance between the Gaussian functions which are in the sum. This can explain the difference we have seen in the previous two numerical examples of \cite{Bao_Carles_al__reg_num_logNLS} and the fact that a rather small change in the distance between the two Gaussons imply a bigger change in the time until which the solution remains close to the sum.

\subsection{Outline of the paper}

Section \ref{sec_gauss_data} is devoted to the study of the solution to \eqref{foc_log_nls} with Gaussian initial data. We recall in there the way to get explicit solutions (as already proved in \cite{nonlin_wave_mec, carlesgallagher, Bao_Carles_al__error_est}) which leads to Propositions \ref{prop_ODE_r} and \ref{prop_existence_breathers_d=1}, and study more carefully the behaviour of $r_\alpha$, in particular its period by proving Theorem \ref{th_period_r}. We also finish by a discussion about how the behaviour of $r_\alpha$ affects the behaviour of $u^\alpha$ (in Section \ref{subsec_discussion_sol}) and by a brief proof of Proposition \ref{prop_existence_breathers}. In Section \ref{sec_nonlin_superp}, we prove Theorem \ref{th_stability_stand_multi_sol}. The proof rely on a computation inspired from the energy estimate in $L^2$ (available thanks to Lemma \ref{lem_log_inequality}) and on Lemma \ref{lem_diff_L_log_L_sum_gaussian}, whose proof takes most of the Section.

\subsection*{Acknowledgements}

The author wishes to thank Rémi Carles and Matthieu Hillairet for enlightening and constructive discussions about this work and the writing of this paper.

\section{Propagation of Gaussian data} \label{sec_gauss_data}

In this section, we prove Propositions \ref{prop_existence_breathers_d=1} and \ref{prop_existence_breathers} and we describe more precisely the behaviour of $r_\alpha$, and in particular its period.
As already noticed in \cite{nonlin_wave_mec} and more rigorously analyzed in \cite{carlesgallagher, Bao_Carles_al__error_est}, an important feature of \eqref{foc_log_nls} is that the evolution of initial Gaussian data remains Gaussian. 
In particular, the case $d=1$ is interesting since we obtain a system of 2 ODEs which can be reduced into a single ODE.

\subsection{From \eqref{foc_log_nls} to ordinary differential equations}

We seek a solution to \eqref{foc_log_nls} (in dimension $d = 1$) under the form:
\begin{equation*}
    u(t) = b (t) \exp \Bigl( \frac{1}{2} - \frac{1}{2} \mu(t) x^2 \Bigr), \qquad t,x \in \mathbb{R},
\end{equation*}
where $\mu(t), b(t) \in \mathbb{C}$ with $\Re \mu(t) > 0$ for all $t$. We can also assume $b(0) = 1$ thanks to Remark \ref{rem_scaling}.

\begin{rem} \label{rem_sol_part_gausson_1}
    We recall the expression of the (one-dimensional) Gausson
    \begin{equation*}
        \exp \Bigl( \frac{1}{2} - \lambda x^2 \Bigr).
    \end{equation*}
    This explains why we took this form: $b (t) = 1$ and $\mu(t) = 2 \lambda$ is therefore a solution.
\end{rem}

It has already been shown in \cite{nonlin_wave_mec} that $b$ takes the form
\begin{equation*}
    b(t) = \biggl( \frac{\mu_r (t)}{\mu_r (0)} \biggr)^\frac{1}{4} e^{i \phi (t)},
\end{equation*}
where $\mu_r \coloneqq \Re \mu$ and $\phi (t)$ is given by
\begin{equation*}
    \phi (t) = - \frac{1}{2} \int_0^t \mu_r (s) \diff s + \frac{\lambda}{2} \int_0^t \ln \frac{\mu_r (s)}{\mu_r (0)} \diff s + \lambda t,
\end{equation*}
and the evolution of $\mu$ ($\mu (t) \in \mathbb{C}$) is driven by this ordinary differential equation
\begin{equation*}
    - i \dot{\mu} (t) + \mu (t)^2 = 2 \lambda \mu_r (t).
\end{equation*}
The latter can actually be rewritten in a simpler way: indeed, $\mu$ can be expressed as (see \cite{carlesgallagher})
\begin{equation}
    \mu = \frac{1}{r^2} - i \frac{\dot{r}}{r}, \label{link_mu_r}
\end{equation}
where $r$ is real and satisfies the ODE
\begin{equation}
    \Ddot{r} = \frac{1}{r^3} - \frac{2 \lambda}{r}. \label{eq_r}
\end{equation}

\begin{rem}
    The initial data of $r$ are given by the initial data of $\mu$ through \eqref{link_mu_r}, thus the two degrees of freedom can be put back on $r$. Indeed, $r(0) = ( \mu_r (0) )^{-\frac{1}{2}}$ can take any positive value whereas $\dot{r} (0) = - ( \mu_r (0) )^{-\frac{1}{2}} \, \mu_i (0)$ can take any real value independently. Thus, for any $\alpha = \alpha_r + i \alpha_i$ such that $\alpha_r > 0$ and $\alpha_i \in \mathbb{R}$, we will denote by $r_{\alpha}$ the solution to \eqref{eq_r} with initial data $r_\alpha (0) = \alpha_r$ and $\dot{r}_\alpha (0) = \alpha_i$.
\end{rem}

Hence, for any $\alpha = \alpha_r + i \alpha_i$ such that $\alpha_r > 0$ and $\alpha_i \in \mathbb{R}$ (i.e. $\alpha \in \mathbb{C}^+$),
\begin{equation*}
    u^\alpha (t) \coloneqq \sqrt{\frac{\alpha_r}{r_\alpha (t)}} \exp \Bigl( i \phi^\alpha (t) + \frac{1}{2} - \frac{1}{2 r_\alpha (t)^2} x^2 + i \frac{\dot{r}_\alpha (t)}{r_\alpha (t)} \frac{x^2}{2} \Bigr), \qquad t,x \in \mathbb{R},
\end{equation*}
where
\begin{equation*}
    \phi^\alpha (t) = - \frac{1}{2} \int_0^t \frac{1}{r_\alpha (s)^2} \diff s - \lambda \int_0^t \ln \frac{r_\alpha (s)}{\alpha_r} \diff s + \lambda t,
\end{equation*}
is solution to \eqref{foc_log_nls}.

\begin{rem}
    In particular, in the continuity of Remark \ref{rem_sol_part_gausson_1}, the Gausson \eqref{expr_gausson} is $u^\alpha$ for $\alpha = (2 \lambda)^{- \frac{1}{2}}$. Indeed, we can easily prove that $r_\alpha (t) = (2 \lambda)^{- \frac{1}{2}}$ and $\phi^\alpha (t) = 0$ for all $t \in \mathbb{R}$ with those initial data.
\end{rem}

\subsection{Study of $r_{\alpha}$}

First of all, we rescale the equation in order to make the $2 \lambda$ factor disappear. Indeed, if we define
\begin{equation*}
    \tau_{\gamma} (t) \coloneqq \sqrt{2 \lambda} \, r_\alpha \Bigl( \frac{t}{2 \lambda} \Bigr), \qquad t \in \mathbb{R},
\end{equation*}
with $\gamma \coloneqq \sqrt{2 \lambda} \alpha_r + i \frac{\alpha_i}{\sqrt{2 \lambda}}$, then $\tau_{\gamma}$ satisfies
\begin{equation}
    \Ddot{\tau}_\gamma = \frac{1}{\tau_{\gamma}^3}  - \frac{1}{\tau_{\gamma}}, \qquad
    \tau_{\gamma} (0) = \gamma_r \coloneqq \Re \gamma > 0, \qquad
    \dot{\tau}_\gamma (0) = \gamma_i \coloneqq \Im \gamma. \label{eq_tau}
\end{equation}
Thus, there remains to study $\tau_\gamma$ instead of $r_\alpha$.

First, we should prove that $\tau_\gamma$ is well defined. For any $\gamma \in \mathbb{C}^+$, the Cauchy-Lipschitz theorem gives a local definition of $\tau_\gamma$. However, since $f(x) \coloneqq x^{-3} - x^{-1} \rightarrow + \infty$ when $x \rightarrow 0^+$, we need to check that $\tau_\gamma (t)$ never touches $0$ in finite time in order to prove that $\tau_\gamma (t)$ is defined for all $t \in \mathbb{R}$.
Such a result can be proved thanks to a conserved quantity. Indeed, \eqref{eq_tau} has an Hamiltonian structure of the form:

\begin{equation*}
    \dot{q} = \frac{\partial H}{\partial p}, \qquad
    \dot{p} = - \frac{\partial H}{\partial q},
\end{equation*}
where
\begin{equation*}
    H (p, q) = \frac{1}{2} p^2 + F(q),
\end{equation*}
with
\begin{equation*}
    F (q) = \frac{1}{2 q^2} + \ln q
\end{equation*}
an anti-derivative of $f$.
In particular, $H$ is conserved by the flow of the equation, hence there holds
\begin{equation}
    E ( \tau_\gamma ) \coloneqq 2 \, H ( \dot{\tau}_\gamma, \tau_\gamma ) = ( \dot{\tau}_\gamma )^2 + \frac{1}{(\tau_\gamma)^2} + 2 \ln \tau_\gamma = E_\gamma, \label{energy_tau}
\end{equation}
where
\begin{equation}
    E_\gamma \coloneqq \gamma_i^2 + \frac{1}{(\gamma_r)^2} + 2 \ln \gamma_r. \label{def_xi}
\end{equation}
We emphasize that
\begin{equation}
    F(q) \rightarrow + \infty, \qquad \textnormal{when either} \, q \rightarrow 0 \ \textnormal{or} \ q \rightarrow + \infty. \label{div_F_dI}
\end{equation}
Hence, it is easy to prove that $\tau_\gamma (t)$ never touches $0$ (and has actually a positive lower bound) and also has an upper bound.

\begin{prop}
    For any $\gamma \in \mathbb{C}^+$, $\tau_\gamma \in \mathcal{C}^\infty ( \mathbb{R} )$ and there holds for all $t \in \mathbb{R}$
    \begin{equation*}
        \frac{1}{1 + \sqrt{E_\gamma - 1}} \leq \tau_\gamma (t) \leq \exp{\frac{E_\gamma}{2}},
    \end{equation*}
    where $E_\gamma \geq 1$ is defined by \eqref{def_xi}.
\end{prop}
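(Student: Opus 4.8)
The plan is to extract everything from the conserved energy \eqref{energy_tau} and then use the a priori bounds it provides to upgrade the local solution to a global smooth one. Since $f(x) = x^{-3} - x^{-1}$ is $\mathcal{C}^\infty$ on $(0,\infty)$, the Cauchy--Lipschitz theorem provides a unique maximal $\mathcal{C}^\infty$ solution $\tau_\gamma$ on some open interval $(T_-, T_+) \ni 0$, with values in $(0,\infty)$, along which the conservation identity
\begin{equation*}
    (\dot{\tau}_\gamma)^2 + \frac{1}{\tau_\gamma^2} + 2\ln\tau_\gamma = E_\gamma
\end{equation*}
holds. The key observation is that, since $(\dot{\tau}_\gamma)^2 \geq 0$, this forces
\begin{equation*}
    \frac{1}{\tau_\gamma(t)^2} + 2\ln\tau_\gamma(t) \leq E_\gamma \qquad \text{for all } t \in (T_-, T_+),
\end{equation*}
which is exactly the inequality from which I will read off both bounds.

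First I would check that $E_\gamma \geq 1$. Minimizing $g(q) \coloneqq q^{-2} + 2\ln q$ on $(0,\infty)$ gives $g'(q) = -2q^{-3} + 2q^{-1}$, which vanishes only at $q = 1$, where $g(1) = 1$ is the global minimum. Reading \eqref{def_xi} as $E_\gamma = \gamma_i^2 + g(\gamma_r) \geq g(\gamma_r) \geq 1$ then yields $E_\gamma \geq 1$, so $\sqrt{E_\gamma - 1}$ is well defined. For the upper bound I would simply drop the nonnegative term $\tau_\gamma^{-2}$ in the displayed inequality to get $2\ln\tau_\gamma(t) \leq E_\gamma$, that is $\tau_\gamma(t) \leq \exp(E_\gamma/2)$.

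The lower bound is the part requiring a small trick. Setting $s \coloneqq 1/\tau_\gamma(t)$, the displayed inequality becomes $s^2 - 2\ln s \leq E_\gamma$. Using the elementary inequality $\ln s \leq s - 1$ (valid for all $s > 0$) gives $s^2 \leq E_\gamma + 2\ln s \leq E_\gamma + 2(s-1)$, hence $(s-1)^2 \leq E_\gamma - 1$, so $s \leq 1 + \sqrt{E_\gamma - 1}$ and therefore $\tau_\gamma(t) \geq \bigl(1 + \sqrt{E_\gamma - 1}\bigr)^{-1}$, as claimed.

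Finally, the global existence and $\mathcal{C}^\infty(\mathbb{R})$ statement follows from the standard continuation argument: the two bounds confine $\tau_\gamma$ to a fixed compact subinterval of $(0,\infty)$ throughout $(T_-, T_+)$, on which $f$ and all its derivatives are bounded, while \eqref{energy_tau} simultaneously bounds $\dot{\tau}_\gamma$; hence $\tau_\gamma$ can neither blow up nor approach $0$ in finite time, which would contradict maximality unless $T_\pm = \pm\infty$, and smoothness on all of $\mathbb{R}$ is then immediate by bootstrapping the ODE. The only genuinely delicate point is the logical ordering: one must note that \eqref{energy_tau} and the resulting bounds hold a priori on the maximal interval (without presupposing globality), and only afterwards invoke them to rule out finite-time escape from $(0,\infty)$; the inequalities themselves are entirely elementary.
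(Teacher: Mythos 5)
Your proposal is correct and follows essentially the same route as the paper: both arguments extract the bounds from the conserved quantity \eqref{energy_tau}, use $\ln s \leq s-1$ applied to $s = 1/\tau_\gamma$ to get $(s-1)^2 \leq E_\gamma - 1$ for the lower bound, drop the positive term to get the upper bound, and then conclude global existence by continuation. The only addition is your explicit verification that $E_\gamma \geq 1$, which the paper asserts without proof; this is a harmless and welcome supplement.
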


\begin{proof}
    In view of the previous remark, proving the lower bound readily leads to the fact that $\tau_\gamma \in \mathcal{C}^\infty ( \mathbb{R} )$ since $f$ is $\mathcal{C}^\infty$ on $(0, \infty)$. To prove this lower bound, we use \eqref{energy_tau}. Indeed, for any $t \in I$ where $I$ is the (maximal) interval of definition for $\tau_\gamma$, there holds 
    \begin{equation*}
        2 \ln \tau_\gamma (t) = - 2 \ln \frac{1}{\tau_\gamma (t)} \geq - 2 \Bigl( \frac{1}{\tau_\gamma (t)} - 1 \Bigr),
    \end{equation*}
    where we used the fact that for all $x > 0$, $\ln x \leq x - 1$.
    Thus, plugging this inequality into \eqref{energy_tau} yields
    \begin{equation*}
        \frac{1}{(\tau_\gamma)^2} - 2 \Bigl( \frac{1}{\tau_\gamma (t)} - 1 \Bigr) \leq E_\gamma, \qquad \textnormal{i.e.} \qquad \Bigl( \frac{1}{\tau_\gamma (t)} - 1 \Bigr)^2 \leq E_\gamma - 1.
    \end{equation*}
    In particular, there also holds
    \begin{equation*}
        \frac{1}{\tau_\gamma (t)} - 1 \leq \sqrt{E_\gamma - 1},
    \end{equation*}
    and then the lower bound for $\tau_\gamma (t)$ and the fact that $\tau_\gamma$ is defined on $\mathbb{R}$ readily follow.
    On the other hand, there also holds thanks to \eqref{energy_tau}
    \begin{equation*}
        2 \ln \tau_\gamma \leq E_\gamma,
    \end{equation*}
    which leads to the upper bound.
\end{proof}

Actually, the behaviour of $\tau_\gamma$ can be better characterized. Indeed, we also emphasize that
\begin{gather*}
    f(1) = 0, \qquad
    f(x) < 0 \quad \forall x > 1, \qquad
    f(x) > 0 \quad \forall x \in (0,1).
\end{gather*}
Thus, the phase portrait (drawn in Figure \ref{phase_portrait_tau}) is rather simple and looks like that of Lotka-Volterra or prey-predator system. In particular, the trajectories are actually the level sets of $H(p,q)$. Then they describe closed Jordan curves symmetric to the x-axis that surround the point $(1,0)$ in the phase portrait, represented by
\begin{equation*}
    p = \pm \sqrt{E_\gamma - 2 F(q)} \qquad \textnormal{for} \quad \gamma_- \leq q \leq \gamma_+,
\end{equation*}
where $0 < \gamma_- < 1 < \gamma_+$ are the two solutions to the equation $2 F(q) = E_\gamma$ with unknown $q$. These values are uniquely determined because of the strict monotonicity of $F$ in the intervals $(0,1)$ and $(1, \infty)$ and $F(1) = 1 < E_\gamma$ and \eqref{div_F_dI}. Thus, similar arguments as for the Lotka-Volterra system can be applied, and then lead to the following Proposition.

\begin{figure}[htp]
\begin{center}
   \includegraphics[scale=0.44]{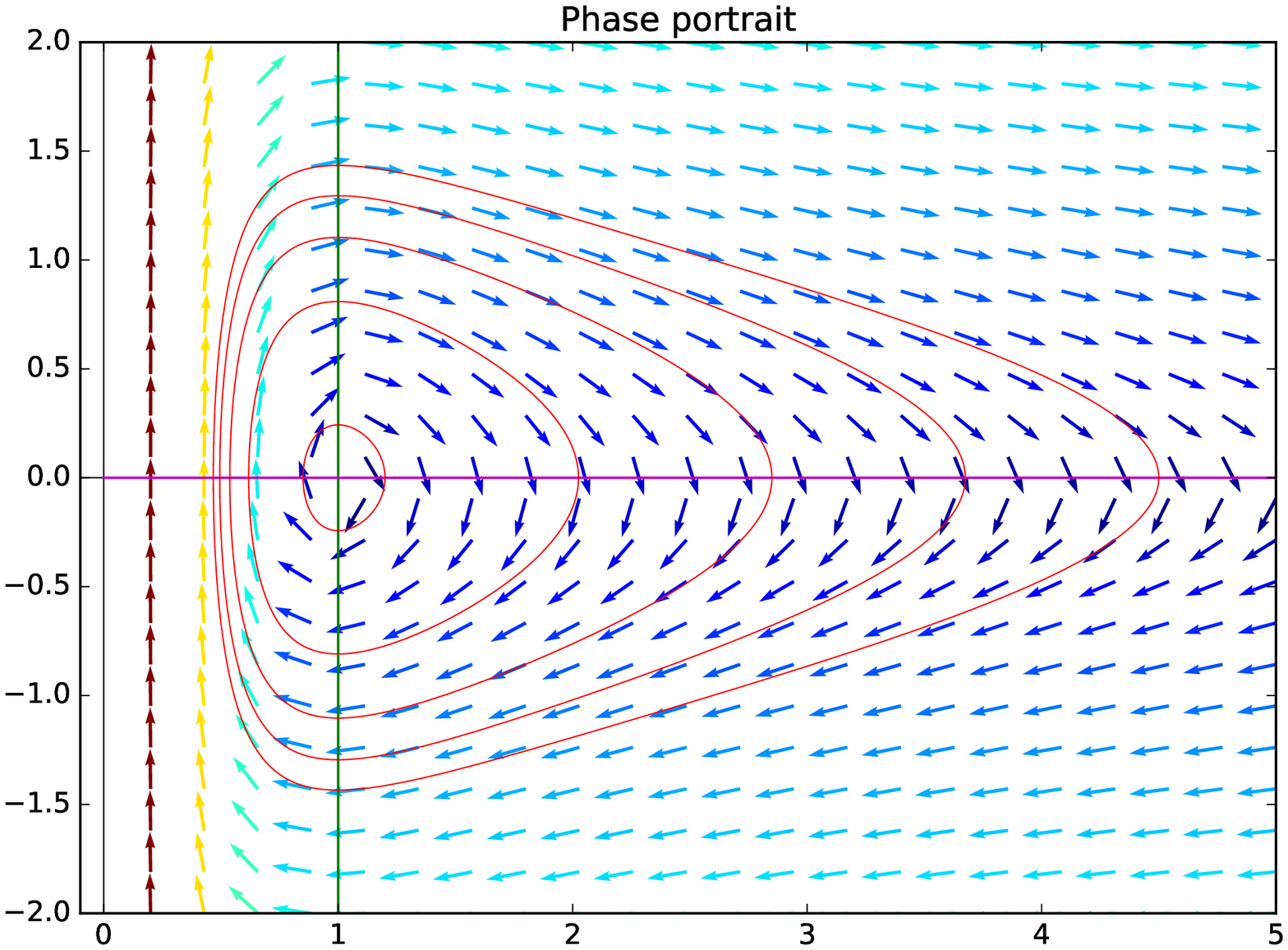}
   \includegraphics[scale=0.44]{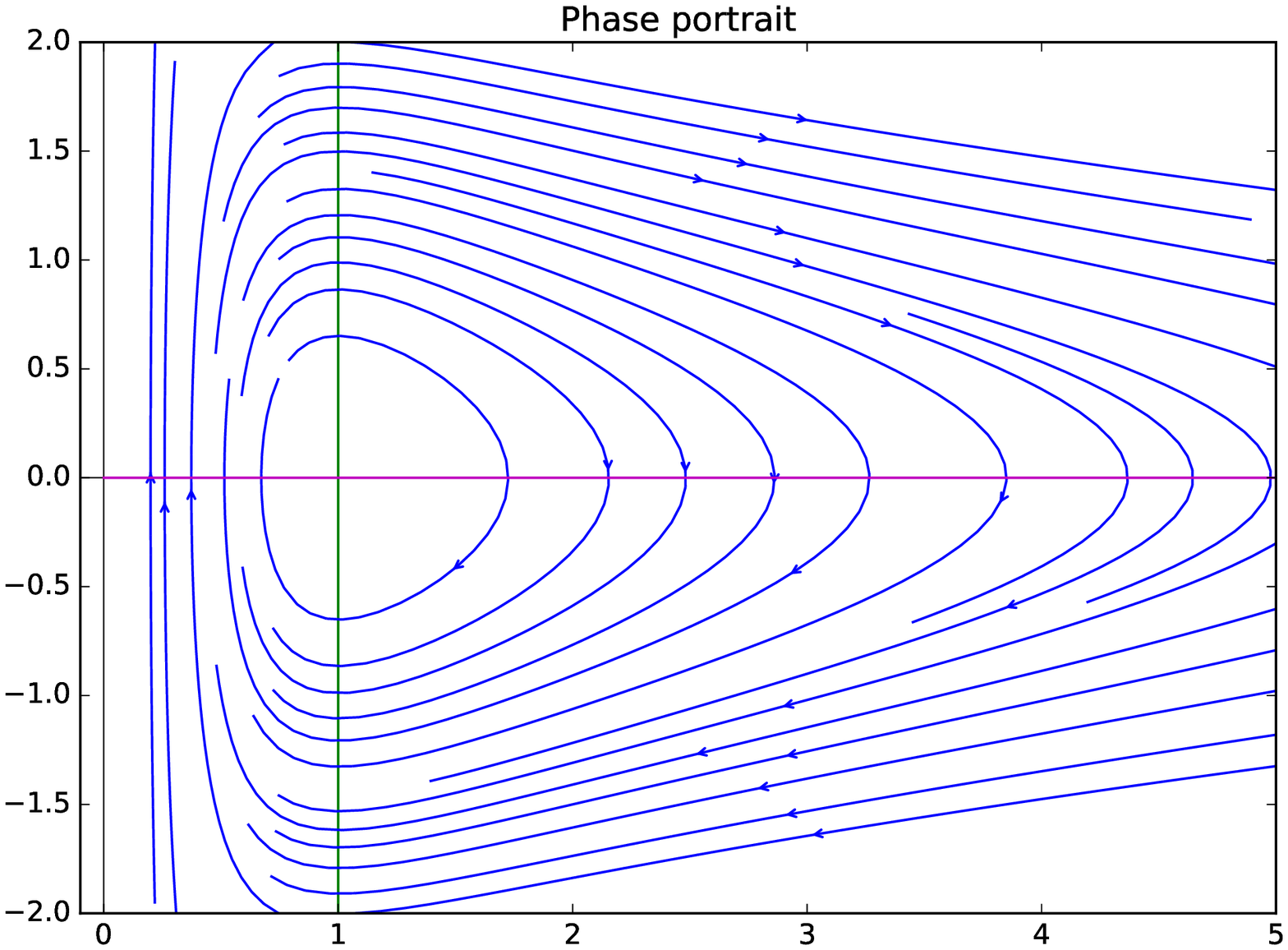}
   \caption{Phase portraits. At the left-hand side, the arrows give the direction of the flow; the green and magenta lines are the horizontal and vertical isoclines respectively; the red lines are some trajectories, which fits the level set of $H(p,q)$. At the right-hand side, a part of some trajectories are drawn along with the isoclines again.} \label{phase_portrait_tau}
\end{center}
\end{figure}

\begin{prop}
    For any $\gamma \in \mathbb{C}^+$, $\tau_\gamma$ is periodic. Moreover, for any $\gamma \in \mathbb{C}^+ \setminus \{ 1 \}$, we have $E_\gamma > 1$ and therefore the period $T_\gamma$ is given by
    \begin{equation}
        T_\gamma = 2 \int_{\gamma_-}^{\gamma_+} \frac{\diff x}{\sqrt{E_\gamma - \frac{1}{x^2} - 2 \ln x}}, \label{T_gamma}
    \end{equation}
    where $\gamma_-$ (resp. $\gamma_+$) is the only solution on $(0,1)$ (resp. $(1, \infty)$) of 
    
    \begin{equation*}
        \frac{1}{x^2} + 2 \ln x = E_\gamma.
    \end{equation*}
\end{prop}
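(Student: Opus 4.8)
The plan is to treat \eqref{eq_tau} as a conservative one-degree-of-freedom system and exploit the conserved energy \eqref{energy_tau} together with the potential $F$ and its coercivity \eqref{div_F_dI}. First I would settle the energy inequality. From \eqref{def_xi} we have $E_\gamma = \gamma_i^2 + 2F(\gamma_r)$, and since $f(1)=0$ with $f>0$ on $(0,1)$ and $f<0$ on $(1,\infty)$, the potential $F$ has a strict global minimum at $q=1$ with $F(1)=\tfrac12$. Hence $E_\gamma = \gamma_i^2 + 2F(\gamma_r) \geq 2F(1) = 1$, with equality exactly when $\gamma_i = 0$ and $\gamma_r = 1$, i.e. $\gamma = 1$; this gives $E_\gamma > 1$ for $\gamma \in \mathbb{C}^+\setminus\{1\}$. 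In the excluded case $\gamma = 1$ the initial data sit at the equilibrium and $\tau_\gamma \equiv 1$, which is periodic, so it remains to treat $\gamma \neq 1$.

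For $\gamma \neq 1$ I would read off the dynamics from the level set $\{(q,p): p^2 + 2F(q) = E_\gamma\}$ traced by $(\tau_\gamma, \dot\tau_\gamma)$. As already observed, strict monotonicity of $F$ on each side of $1$, the bound $2F(1) = 1 < E_\gamma$ and \eqref{div_F_dI} force this set to be a bounded curve meeting the line $p=0$ exactly at the two turning points $q = \gamma_-$ and $q = \gamma_+$, the unique solutions of $2F(q) = E_\gamma$ in $(0,1)$ and $(1,\infty)$. While $\tau_\gamma \in (\gamma_-, \gamma_+)$ one has $\dot\tau_\gamma \neq 0$, so on the rising arc \eqref{energy_tau} gives $\dot\tau_\gamma = \sqrt{E_\gamma - \tfrac{1}{\tau_\gamma^2} - 2\ln\tau_\gamma} > 0$, whence by separation of variables the time to pass from $\gamma_-$ to $\gamma_+$ equals $\int_{\gamma_-}^{\gamma_+}\diff\tau / \sqrt{E_\gamma - \tfrac{1}{\tau^2} - 2\ln\tau}$.

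The crux — and the main obstacle — is the convergence of this improper integral at the endpoints, where the integrand is singular because $\dot\tau_\gamma$ vanishes there. Near $\gamma_+$, a first-order Taylor expansion of $F$ gives $E_\gamma - 2F(\tau) = -2F'(\gamma_+)(\tau - \gamma_+) + O((\tau-\gamma_+)^2) = 2\,|f(\gamma_+)|\,(\gamma_+ - \tau) + O((\gamma_+-\tau)^2)$, using $F' = -f$ and $f(\gamma_+) < 0$; hence the integrand behaves like $(2|f(\gamma_+)|)^{-1/2}(\gamma_+ - \tau)^{-1/2}$, an integrable square-root singularity, and the endpoint $\gamma_-$ is identical with $f(\gamma_-) > 0$. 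This finiteness serves a double purpose: it shows the turning points are reached in finite time — so the orbit is a genuine closed loop rather than an orbit asymptotic to the equilibrium — which yields periodicity of $\tau_\gamma$; and, combined with the $p \mapsto -p$ symmetry of the level set (which makes the descending arc take the same time as the rising one), it produces the factor $2$ and the formula \eqref{T_gamma} for $T_\gamma$.
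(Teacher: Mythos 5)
Your proposal is correct and follows essentially the same route as the paper, which sets up the Hamiltonian level-set/phase-portrait picture with turning points $\gamma_\pm$ and then defers the standard details to Walter's ODE textbook; you simply write out those details (the integrable square-root singularity at the turning points, the finite transit time, and the $p\mapsto -p$ symmetry giving the factor $2$) explicitly and correctly. No gaps.
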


\begin{proof}
For a more precise proof, we refer to the Chapters 3.VII. and 11.X. of \cite{Walter_EDO}, by adapting it for a smaller interval than $(- \infty, + \infty)$. This is also a re-writing of equation (6.28) of \cite{nonlin_wave_mec}.
\end{proof}

Now, we prove Theorem \ref{th_period_r}, starting by the continuity of the period with respect to the parameters. With the previous result, we know that the definition of $\gamma_-$ and $\gamma_+$ depends only on $E_\gamma > 1$, hence the period depends only on $E_\gamma$, which is continuous with respect to $\gamma \in \mathbb{C}^+$ thanks to \eqref{def_xi}.
Therefore, we only need to prove the continuity of $T_\gamma$ with respect to $E_\gamma > 1$.

\begin{prop} \label{prop_C0_periode}
    $T_\gamma$ is continuous with respect to $E_\gamma > 1$, hence also with respect to $\gamma \in \mathbb{C}^+ \setminus \{ 1 \}$.
\end{prop}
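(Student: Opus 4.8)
The plan is to transform the period integral \eqref{T_gamma} into an integral over a \emph{fixed} interval whose integrand depends continuously on $E_\gamma$, thereby removing at once the two obstacles present in \eqref{T_gamma}: the endpoints $\gamma_\pm$ move with $E_\gamma$, and the integrand blows up (integrably) at those endpoints. Set $V(x) \coloneqq \frac{1}{x^2} + 2\ln x = 2F(x)$, so that \eqref{T_gamma} reads $T_\gamma = 2\int_{\gamma_-}^{\gamma_+}\bigl(E_\gamma - V(x)\bigr)^{-1/2}\diff x$ with $\gamma_\pm$ the two roots of $V(x) = E_\gamma$. The relevant structural facts are that $V \in \mathcal{C}^\infty((0,\infty))$ has a unique, \emph{nondegenerate} minimum, namely $V(1) = 1$, $V'(1) = 0$ and $V''(1) = 4 > 0$, with $V' < 0$ on $(0,1)$, $V' > 0$ on $(1,\infty)$, and $V(x) \to +\infty$ as $x \to 0^+$ and as $x \to +\infty$.

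First I would introduce the change of variable
\[
    s \coloneqq \operatorname{sgn}(x-1)\sqrt{V(x) - 1}, \qquad x \in (0,\infty).
\]
Because $V$ has a nondegenerate minimum at $x = 1$, the standard factorization $V(x) - 1 = (x-1)^2 w(x)$ with $w$ smooth and $w(1) = V''(1)/2 > 0$ shows that $s = (x-1)\sqrt{w(x)} \eqqcolon h(x)$ is a smooth \emph{increasing} diffeomorphism of $(0,\infty)$ onto $\mathbb{R}$, with smooth inverse $x = x(s)$. Differentiating $s^2 = V(x) - 1$ gives $2s\,\diff s = V'(x)\,\diff x$, so the Jacobian factor
\[
    \psi(s) \coloneqq \frac{2s}{V'(x(s))} = \frac{1}{h'(x(s))}
\]
extends to a smooth, strictly positive function on all of $\mathbb{R}$ (its value at $s = 0$ being $1/h'(1) = \sqrt{2/V''(1)}$), and, crucially, $\psi$ does \emph{not} depend on $E_\gamma$.

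Under this substitution $E_\gamma - V(x) = (E_\gamma - 1) - s^2$ and the endpoints satisfy $\gamma_\pm \mapsto s = \pm\sqrt{E_\gamma - 1}$, so
\[
    T_\gamma = 2\int_{-\sqrt{E_\gamma - 1}}^{\sqrt{E_\gamma - 1}} \frac{\psi(s)}{\sqrt{(E_\gamma - 1) - s^2}}\,\diff s .
\]
A further substitution $s = \sqrt{E_\gamma - 1}\,\sin\phi$ then freezes the domain and cancels the singular weight, yielding the clean formula
\[
    T_\gamma = 2\int_{-\pi/2}^{\pi/2} \psi\!\left(\sqrt{E_\gamma - 1}\,\sin\phi\right)\diff\phi .
\]
From here continuity (in fact $\mathcal{C}^\infty$-smoothness) in $E_\gamma > 1$ is immediate: the integrand is jointly continuous in $(\phi, E_\gamma)$ on the compact interval $[-\pi/2, \pi/2]$ and locally bounded, so continuity of $T_\gamma$ in $E_\gamma$ follows from the theorem on continuity of parameter-dependent integrals, and the continuity with respect to $\gamma \in \mathbb{C}^+ \setminus \{1\}$ then follows because $E_\gamma$ is continuous in $\gamma$ by \eqref{def_xi}.

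The hard part is precisely the coupling between the moving endpoints and the square-root blow-up of the integrand at $\gamma_\pm$, which rules out a naive differentiation or continuity argument under the integral sign directly in \eqref{T_gamma}. The substitution above dissolves both difficulties simultaneously; the only delicate verification is the smoothness (and non-vanishing) of $\psi$ across $s = 0$, i.e. at the turning point $x = 1$, which relies entirely on the nondegeneracy $V''(1) \neq 0$ of the minimum through the factorization of $V - 1$.
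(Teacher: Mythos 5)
Your proof is correct, and it takes a genuinely different route from the paper's. The paper splits the integral \eqref{T_gamma} at $x=1$, performs separate substitutions on each half to pull the moving endpoint back to the fixed interval $[0,1]$, and then dominates each rescaled integrand by a function of the form $C/\sqrt{1-x}$ (using a lower bound on the derivative of $x \mapsto \frac{1}{1+\delta_\pm x} \mp \ln(1+\delta_\pm x)$) so that continuity under the integral sign applies despite the integrable endpoint singularity. You instead exploit the nondegeneracy of the minimum of $V(x)=\frac1{x^2}+2\ln x$ at $x=1$ ($V''(1)=4$) to build a single global diffeomorphism $s=h(x)=(x-1)\sqrt{w(x)}$ with $V-1=h^2$, which converts $T_\gamma$ into $2\int_{-\pi/2}^{\pi/2}\psi(\sqrt{E_\gamma-1}\sin\phi)\,\mathrm d\phi$ with $\psi=1/(h'\circ h^{-1})$ smooth, positive and independent of $E_\gamma$; all the checks you flag (strict positivity of $w$ away from $1$ follows from $V>V(1)$ there, $h'>0$ from $2hh'=V'$ and the sign pattern of $V'$, surjectivity onto $\mathbb{R}$ from $V\to\infty$ at both ends of $(0,\infty)$) go through. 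This normal-form argument is the standard treatment of the period function of a planar Hamiltonian center; it buys more than the paper's proof, since the integrand is now continuous and locally bounded on a fixed compact interval, so you get smoothness of $T_\gamma$ in $E_\gamma$ for free, and the formula extends continuously to $E_\gamma=1$ with value $2\pi\psi(0)=\sqrt2\,\pi$, recovering the small-oscillation limit that the paper obtains separately by citation. The paper's proof is more elementary (no Morse-type factorization) but yields only continuity.
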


First, we shall prove the regularity of $\gamma_-$ and $\gamma_+$ with respect to $E_\gamma$ (hence also to $\gamma$).

\begin{lem}
    $\gamma_-$ and $\gamma_+$ are $\mathcal{C}^\infty$ with respect to $E_\gamma \in (1, \infty)$.
\end{lem}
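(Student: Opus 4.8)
The plan is to identify $\gamma_-$ and $\gamma_+$ as the two monotone branches of the inverse of the smooth function
\begin{equation*}
    g (x) \coloneqq \frac{1}{x^2} + 2 \ln x = 2 F(x),
\end{equation*}
and to conclude by the inverse function theorem applied separately on each interval of monotonicity. First I would compute
\begin{equation*}
    g' (x) = - \frac{2}{x^3} + \frac{2}{x} = \frac{2 (x^2 - 1)}{x^3},
\end{equation*}
which is negative on $(0,1)$ and positive on $(1, \infty)$, with $g(1) = 1$ and $g(x) \to + \infty$ both as $x \to 0^+$ and as $x \to + \infty$ (this last point being a restatement of \eqref{div_F_dI}). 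Hence $g$ restricts to a strictly monotone $\mathcal{C}^\infty$ bijection from $(0,1)$ onto $(1, + \infty)$ and from $(1, + \infty)$ onto $(1, + \infty)$, and by definition $\gamma_- (E_\gamma)$ (resp. $\gamma_+ (E_\gamma)$) is the preimage of $E_\gamma$ under the first (resp. second) of these two restrictions.

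The key step is then to check that $g'$ does not vanish at $\gamma_\pm$, so that the inverse function theorem applies. Since $g'$ vanishes only at $x = 1$, and since $E_\gamma > 1 = g(1)$ forces $\gamma_- \in (0,1)$ and $\gamma_+ \in (1, \infty)$ to stay strictly away from $1$ by the strict monotonicity of $g$ on each interval, we indeed get $g'(\gamma_-) \neq 0$ and $g'(\gamma_+) \neq 0$. As the inverse of a smooth bijection with nowhere-vanishing derivative is itself smooth, each branch $E_\gamma \mapsto \gamma_\pm (E_\gamma)$ is $\mathcal{C}^\infty$ on $(1, \infty)$. Composing with the smooth map $\gamma \mapsto E_\gamma$ given by \eqref{def_xi} then yields the smoothness of $\gamma_\pm$ with respect to $\gamma$ as well.

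There is essentially no obstacle in this argument; the only point deserving care is the verification $g'(\gamma_\pm) \neq 0$, which is precisely the statement $\gamma_\pm \neq 1$ and is automatic as soon as $E_\gamma > 1$. Everything else is a direct application of the global inverse function theorem to the two monotone smooth branches of $g$ separated by the critical point $x=1$.
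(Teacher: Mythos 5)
Your proof is correct and follows essentially the same route as the paper: the paper invokes the implicit function theorem for $F(q) - \tfrac{E_\gamma}{2} = 0$ using $F'(q) \neq 0$ for $q \neq 1$, which in this one-dimensional setting is the same as your application of the inverse function theorem to the two monotone branches of $2F$. The only difference is cosmetic (you additionally re-verify the bijectivity of each branch, which the paper takes from the earlier discussion of $\gamma_\pm$).
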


\begin{proof}
    The definition of $\gamma_-$ and $\gamma_+$ leads to those two properties:
    \begin{equation*}
        \begin{cases}
            F(q) - \frac{E_\gamma}{2} = 0 \\
            q \in (0,1)
        \end{cases}
        \Longleftrightarrow q = \gamma_- (E_\gamma),
        \qquad \qquad
        \begin{cases}
            F(q) - \frac{E_\gamma}{2} = 0 \\
            q > 1
        \end{cases}
        \Longleftrightarrow q = \gamma_+ (E_\gamma).
    \end{equation*}
    Moreover, we know that $F$ is $\mathcal{C}^\infty$ on $(0, \infty)$ and that for all $q \in (0, \infty) \setminus \{ 1 \}$, there holds
    \begin{equation*}
        F'(q) = f(q) \neq 0.
    \end{equation*}
    Therefore the conclusion readily follows from the implicit function theorem.
\end{proof}

\begin{proof}[Proof of Proposition \ref{prop_C0_periode}]
    First, we cut the integral in \eqref{T_gamma} into 2 :
    \begin{equation}
        \int_{\gamma_-}^{\gamma_+} \frac{\diff x}{\sqrt{E_\gamma - \frac{1}{x^2} - 2 \ln x}} = \int_{\gamma_-}^{1} \frac{\diff x}{\sqrt{E_\gamma - \frac{1}{x^2} - 2 \ln x}} + \int_{1}^{\gamma_+} \frac{\diff x}{\sqrt{E_\gamma - \frac{1}{x^2} - 2 \ln x}}. \label{eq_int_cut}
    \end{equation}
    Thanks to this equality, we prove the continuity of $T_\gamma$ by proving the continuity of the two integrals in the right-hand side. For example, for the latter, there holds
    \begin{align}
        \int_{1}^{\gamma_+} \frac{\diff x}{\sqrt{E_\gamma - \frac{1}{x^2} - 2 \ln x}} &= \int_{1}^{(\gamma_+)^2} \frac{\diff y}{2 \sqrt{y} \sqrt{E_\gamma - \frac{1}{y} - \ln y}} \notag \\
            &= \int_{0}^{\delta_+} \frac{\diff z}{2 \sqrt{1 + z} \sqrt{E_\gamma - \frac{1}{1 + z} - \ln (1 + z)}} \notag \\
            &= \int_{0}^{1} \frac{\delta_+ \diff x}{2 \sqrt{1 + \delta_+ x} \sqrt{E_\gamma - \frac{1}{1 + \delta_+ x} - \ln (1 + \delta_+ x)}}, \label{int_cont_+}
    \end{align}
    where $\delta_+ = (\gamma_+)^2 - 1 > 0$. In particular, $\delta_+$ is continuous with respect to $E_\gamma$, therefore the integrand is continuous with respect to $E_\gamma$. Moreover, setting $g_{\delta_+} (x) = \frac{1}{1 + \delta_+ x} - \ln (1 + \delta_+ x)$, there holds
    \begin{equation*}
        g_{\delta_+} (1) = E_\gamma, \qquad
        g_{\delta_+}' (x) = (\delta_+)^2 \frac{x}{(1 + \delta_+ x)^2} \geq \frac{(\delta_+)^2}{(1 + \delta_+)^2} x \qquad \forall x \in [0,1].
    \end{equation*}
    Thus, there holds for all $x \in [0,1]$
    \begin{equation*}
        E_\gamma - g_{\delta_+} (x) \geq \frac{(\delta_+)^2}{(1 + \delta_+)^2} \int_x^1 y \diff y = \frac{(\delta_+)^2}{2 (1 + \delta_+)^2} (1-x^2) \geq \frac{(\delta_+)^2}{2 (1 + \delta_+)^2} (1-x).
    \end{equation*}
    Hence, there also holds
    \begin{equation*}
        \frac{\delta_+}{2 \sqrt{1 + \delta_+ x} \sqrt{E_\gamma - \frac{1}{1 + \delta_+ x} - \ln (1 + \delta_+ x)}} \leq \frac{1 + \delta_+}{\sqrt{2 (1-x)}}.
    \end{equation*}
    The continuity of the right-hand side of \eqref{int_cont_+} readily follows from the theorem of continuity under integral sign.
    In the same way, the first integral in the right-hand side of \eqref{eq_int_cut} can be transformed into
    \begin{align*}
        \int_{\gamma_-}^{1} \frac{\diff x}{\sqrt{E_\gamma - \frac{1}{x^2} - 2 \ln x}} &= \int_{(\gamma_-)^2}^{1} \frac{\diff y}{2 \sqrt{y} \sqrt{E_\gamma - \frac{1}{y} - \ln y}} \\
            &= \int_1^{\frac{1}{(\gamma_-)^2}} \frac{\diff z}{2 z^\frac{3}{2} \sqrt{E_\gamma - z + \ln z}} \\
            &= \int_0^{\delta_-} \frac{\diff z}{2 (1+z)^\frac{3}{2} \sqrt{E_\gamma - (1+z) + \ln (1+z)}} \\
            &= \int_0^{1} \frac{\delta_- \diff z}{2 (1+\delta_- z)^\frac{3}{2} \sqrt{E_\gamma - (1+\delta_- z) + \ln (1+\delta_- z)}},
    \end{align*}
    where $\delta_- = \frac{1}{(\gamma_-)^2} - 1 > 0$.
    Setting $h_{\delta_-} (x) \coloneqq 1 + \delta_- x - \ln (1+\delta_- z)$, there holds in the same way:
    \begin{equation*}
        h_{\delta_-} (1) = E_\gamma, \qquad h_{\delta_-}' (x) = (\delta_-)^2 \frac{x}{1 + \delta_- x} \geq \frac{(\delta_-)^2}{1 + \delta_-} x \qquad \forall x \in [0,1].
    \end{equation*}
    Hence similar arguments can be applied here and yield the continuity of the previous integral. Thus, we obtain the continuity of $T_\gamma$ with respect to $E_\gamma$, hence also with respect to $\gamma$ since $E_\gamma$ is continuous with respect to $\gamma$.
\end{proof}

Now that the continuity of the period with respect to the parameters is proved, and since it is impossible to get a simpler expression of this period, we shall find some approximations.
In particular, the question (i) of the Chapter 11.XI. of \cite{Walter_EDO} is also interesting since, along with the fact that $f'(1) = -2$, it gives the limit of the period of $\tau_\gamma$ when $\gamma$ goes to 1. We can also cite \cite{Opial_EDO} for a proof of this result.

\begin{prop}
    When $\gamma \rightarrow 1$, $T_\gamma \rightarrow \sqrt{2} \, \pi$.
\end{prop}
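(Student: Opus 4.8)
The plan is to make rigorous the heuristic of small oscillations around the equilibrium $\tau=1$. Linearizing \eqref{eq_tau} at the rest point by setting $\tau_\gamma = 1+\varepsilon$ and using $f(1)=0$, $f'(1)=-2$ gives $\Ddot{\varepsilon}\approx f'(1)\varepsilon = -2\varepsilon$, so infinitesimal oscillations have angular frequency $\sqrt{2}$ and period $2\pi/\sqrt{2}=\sqrt{2}\,\pi$. Since $\gamma\to 1$ forces $E_\gamma\to 1$ and hence $\gamma_-,\gamma_+\to 1$, the two turning points collapse onto the minimum and the integral in \eqref{T_gamma} becomes a degenerate $0/0$ limit. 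The whole difficulty is therefore to extract this limit from \eqref{T_gamma} in spite of the integrable endpoint singularities; I would do this by a regularizing change of variables that turns $T_\gamma$ into an integral over a fixed interval.

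Concretely, write $U(x)\coloneqq \frac{1}{x^2}+2\ln x = 2F(x)$, so that the denominator in \eqref{T_gamma} is $\sqrt{E_\gamma-U(x)}$, and record $U(1)=1$, $U'(1)=-2f(1)=0$, $U''(1)=-2f'(1)=4>0$. The key step is that $x=1$ is a nondegenerate minimum of the smooth function $U$: by Hadamard's lemma I can factor
\begin{equation*}
    U(x)-1 = (x-1)^2\, h(x),
\end{equation*}
with $h$ smooth near $1$ and $h(1)=\tfrac{1}{2}U''(1)=2>0$. Consequently $v(x)\coloneqq (x-1)\sqrt{h(x)}$ is smooth and strictly increasing on a fixed neighbourhood of $1$, with $v(1)=0$, $v'(1)=\sqrt{h(1)}=\sqrt{2}$, and it satisfies $U(x)-1=v(x)^2$; by the inverse function theorem it is a diffeomorphism onto a neighbourhood of $0$ with $\frac{dx}{dv}(0)=1/\sqrt{2}$. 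For $\gamma$ close enough to $1$ the whole integration range $[\gamma_-,\gamma_+]$ lies inside this neighbourhood, so I may change variables to $v$.

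Setting $\eta\coloneqq E_\gamma-1\to 0^+$, the turning points become $v=\pm\sqrt{\eta}$ and \eqref{T_gamma} reads
\begin{equation*}
    T_\gamma = 2\int_{-\sqrt{\eta}}^{\sqrt{\eta}} \frac{1}{\sqrt{\eta - v^2}}\,\frac{dx}{dv}\,dv.
\end{equation*}
The further substitution $v=\sqrt{\eta}\,\sin\theta$ removes the singularity and fixes the domain, giving
\begin{equation*}
    T_\gamma = 2\int_{-\pi/2}^{\pi/2} \frac{dx}{dv}\Bigl(\sqrt{\eta}\,\sin\theta\Bigr)\,d\theta.
\end{equation*}
As $\eta\to 0$ the argument $\sqrt{\eta}\,\sin\theta$ tends to $0$ uniformly in $\theta$, so by continuity of $\frac{dx}{dv}$ at $0$ (or dominated convergence) the integrand converges uniformly to $\frac{dx}{dv}(0)=1/\sqrt{2}$, whence $T_\gamma\to 2\cdot\pi\cdot\frac{1}{\sqrt{2}}=\sqrt{2}\,\pi$. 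I expect the only genuinely delicate point to be the smooth, signed square-root change of variables $U(x)-1=v^2$; this rests entirely on the nondegeneracy $U''(1)=4\neq 0$ (equivalently $f'(1)=-2\neq 0$), and once it is in place the remaining substitution and passage to the limit are routine.
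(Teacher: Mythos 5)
Your argument is correct, and it is essentially a complete, self-contained proof of a statement that the paper does not actually prove: the paper simply invokes the classical fact (question (i) of Chapter 11.XI of Walter's ODE book, with a further pointer to Opial) that the period of oscillations about a nondegenerate center converges to the linearized period $2\pi/\sqrt{-f'(1)}=\sqrt{2}\,\pi$. What you have written out is precisely the standard proof of that classical fact, specialized to $U(x)=\tfrac{1}{x^2}+2\ln x$: the Hadamard factorization $U(x)-1=(x-1)^2h(x)$ with $h(1)=\tfrac12 U''(1)=2$, the signed square-root coordinate $v$ with $\tfrac{dx}{dv}(0)=1/\sqrt{2}$, the trigonometric substitution that desingularizes the turning points, and uniform convergence of the integrand. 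All the hypotheses you need are verified or immediate ($E_\gamma\to 1$, hence $\gamma_\pm\to 1$; $h$ smooth and positive near $1$ by Taylor's theorem with integral remainder; $v$ a local diffeomorphism since $v'(1)=\sqrt{2}>0$). The trade-off is the obvious one: the paper buys brevity by citing a textbook, while your version is longer but self-contained and makes transparent that the whole result hinges on the single nondegeneracy condition $U''(1)=-2f'(1)=4\neq 0$. Your proof would also slot naturally alongside the paper's treatment of the large-oscillation regime (Proposition \ref{prop_period_inf}), where the author does carry out the analogous explicit estimates on the period integral by hand.
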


On the other hand, another interesting question is the period in the case of big oscillations. As one may think from the phase portrait (Fig \ref{phase_portrait_tau}) and even more from the big trajectories (Fig \ref{big_traj}), a small increase in the initial energy (the energy of the trajectories in Fig \ref{big_traj} goes from $4$ to $6$) induces a big increase for the maximum of $\tau_\gamma$ (from $20$ to $50$) and for the period. To prove this behaviour, explicit computations and inequalities are required and yield the following result.

\begin{figure}[htp]
\begin{center}
   \includegraphics[scale=0.6]{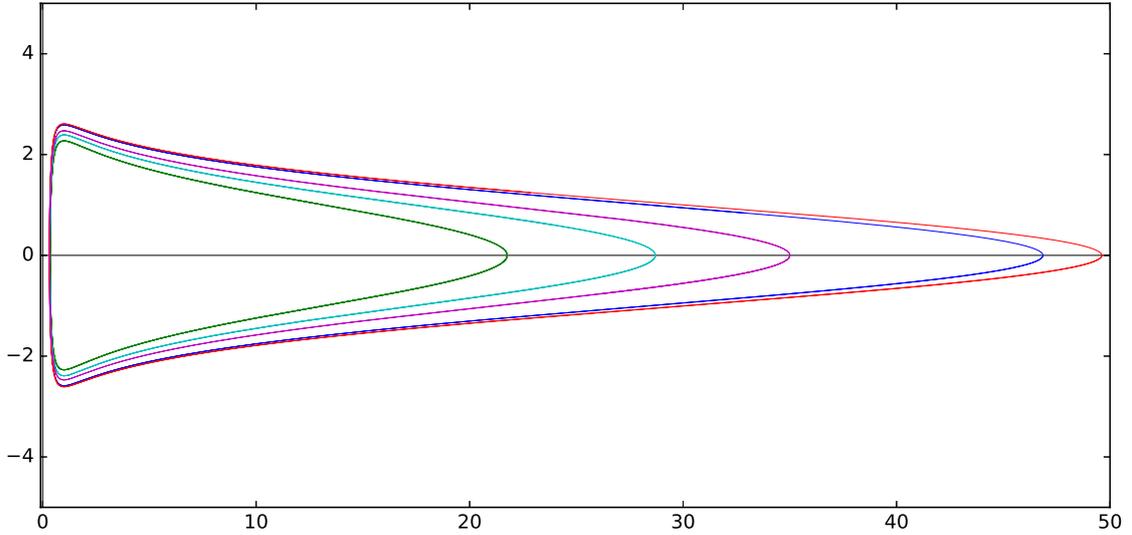}
   \caption{Plot of some trajectories of $\tau_\gamma$ in the phase space for 5 real $\gamma$ between $10$ and $50$.} \label{big_traj}
\end{center}
\end{figure}

\begin{prop} \label{prop_period_inf}
    When $\gamma \rightarrow \infty$ or $\Re \gamma \rightarrow 0$,
    \begin{equation*}
        T_\gamma \sim \sqrt{2 \pi} \exp{\frac{E_\gamma}{2}}.
    \end{equation*}
\end{prop}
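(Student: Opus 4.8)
The plan is to reduce the statement to an asymptotic analysis of \eqref{T_gamma} as $E_\gamma \to +\infty$, and then to a Laplace/Watson-type estimate concentrated at the right turning point. First I would observe that in both regimes the energy blows up: from \eqref{def_xi}, if $\gamma \to \infty$ then either $\gamma_i^2 \to \infty$ or $2\ln\gamma_r \to \infty$, while if $\Re\gamma = \gamma_r \to 0$ then $\gamma_r^{-2}\to\infty$; in every case $E_\gamma \to +\infty$. Since the period depends on $\gamma$ only through $E_\gamma$ (as used in Proposition \ref{prop_C0_periode}), it suffices to prove $T_\gamma \sim \sqrt{2\pi}\,e^{E/2}$ as $E \coloneqq E_\gamma \to +\infty$. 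I would record the turning-point asymptotics: from $\gamma_+^{-2}+2\ln\gamma_+ = E$ and $\gamma_+\to\infty$ one gets $\gamma_+^{-2}\to 0$, hence $\gamma_+ = e^{E/2}(1+o(1))$; similarly $\gamma_-\to 0$ with $\gamma_-^{-2} = E(1+o(1))$, so $\gamma_- \sim E^{-1/2}$.

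Next I would split \eqref{T_gamma} at $x=1$ and show the piece over $[1,\gamma_+]$ carries the full asymptotics. On that interval I substitute $t = 2\ln x$ and then $w = E-t$; using $e^{t/2} = e^{E/2}e^{-w/2}$ and $x^{-2} = e^{w-E}$ this piece becomes
\[
    2\int_1^{\gamma_+}\frac{\diff x}{\sqrt{E - \frac{1}{x^2} - 2\ln x}} = e^{E/2}\int_{w_+}^{E}\frac{e^{-w/2}}{\sqrt{w - e^{w-E}}}\,\diff w,\qquad w_+ \coloneqq \gamma_+^{-2}\to 0.
\]
The claim is that the remaining integral tends to $\int_0^\infty w^{-1/2}e^{-w/2}\,\diff w = \Gamma(1/2)\sqrt{2} = \sqrt{2\pi}$. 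Setting $\psi(w) \coloneqq w - e^{w-E}$, one checks $\psi(w_+)=0$, $\psi'=1-e^{w-E}>0$ and $\psi''=-e^{w-E}<0$ on $(w_+,E)$, so $\psi$ is increasing and concave; comparing with the chord from $(w_+,0)$ to $(E,E-1)$ gives $\psi(w)\ge \tfrac12(w-w_+)$ for $E$ large. After the shift $s = w-w_+$ this produces the $E$-independent majorant $\sqrt{2}\,e^{-s/2}s^{-1/2}$, while pointwise $\psi(s+w_+)\to s$; dominated convergence then yields the limit $\sqrt{2\pi}$.

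Finally I would verify that the left piece $2\int_{\gamma_-}^1$ is negligible, i.e. $o(e^{E/2})$, and in fact $o(1)$: splitting at $2\gamma_-$, the monotonicity of $x\mapsto x^{-2}+2\ln x$ gives a lower bound of order $E$ under the square root on $[2\gamma_-,1]$, contributing $O(E^{-1/2})$, and a crude lower bound on the derivative near the singularity gives a contribution $O(\gamma_-^2)=O(E^{-1})$ on $[\gamma_-,2\gamma_-]$. Combining, $T_\gamma = e^{E/2}(\sqrt{2\pi}+o(1)) + o(1) \sim \sqrt{2\pi}\,e^{E/2}$, which is the claim.

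The main obstacle is the middle step: the integrable square-root singularity sits at the turning point $\gamma_+$, which escapes to infinity, so the naive pointwise limit must be justified uniformly in $E$. The concavity of $\psi$ (equivalently, a clean lower bound for $E-\frac{1}{x^2}-2\ln x$ near $x=\gamma_+$) is exactly what supplies a fixed integrable majorant after the shift, and obtaining this bound uniformly in $E$ is the crux; the turning-point expansions and the negligibility of the left piece are then routine bookkeeping.
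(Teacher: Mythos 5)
Your proposal is correct and follows essentially the same route as the paper: split \eqref{T_gamma} at $x=1$, show the left piece is $o(1)$, and treat the dominant piece via a logarithmic substitution combined with the chord bound for the convex exponential (your concavity of $\psi(w)=w-e^{w-E}$ is exactly the paper's bound $e^{-2y}\leq 1-\frac{y}{y_+}(1-e^{-2y_+})$ in disguise). The only real difference is cosmetic: you pass to the limit once by dominated convergence, identifying $\int_0^\infty w^{-1/2}e^{-w/2}\diff w=\sqrt{2\pi}$, whereas the paper sandwiches the integral between an explicit lower and upper bound, each asymptotic to $\sqrt{\pi/2}\,\gamma_+$.
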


\begin{proof}
    First, we emphasize that the condition $\gamma \rightarrow \infty$ or $\Re \gamma \rightarrow 0$ is equivalent to the simpler condition $E_\gamma \rightarrow + \infty$, and therefore also to the facts that $\gamma_- \rightarrow 0$ and $\gamma_+ \rightarrow + \infty$. To be more precise 
    %
    for $\gamma_+$, there holds
    \begin{equation*}
        2 \ln \gamma_+ + o(1) = E_\gamma, \qquad \textnormal{i.e.} \qquad \gamma_+ \sim \exp{\frac{E_\gamma}{2}}.
    \end{equation*}

    Then we cut the integral in \eqref{T_gamma} in two: before $1$ and after $1$. First,
    \begin{align*}
        \int_{\gamma_-}^{1} \frac{\diff x}{\sqrt{E_\gamma - \frac{1}{x^2} - 2 \ln x}} &= \int_{\gamma_-}^1 \frac{\diff x}{\sqrt{\frac{1}{(\gamma_-)^2} - \frac{1}{x^2} - 2 \ln \frac{x}{\gamma_-}}} \\
        &= \gamma_- \int_1^{\frac{1}{\gamma_-}} \frac{\diff y}{\sqrt{\frac{1}{(\gamma_-)^2} \Bigl( 1 - \frac{1}{y^2} \Bigr) - 2 \ln y}} \\
        &= (\gamma_-)^2 \int_1^{\frac{1}{\gamma_-}} \frac{\diff y}{\sqrt{1 - \frac{1}{y^2} - 2 (\gamma_-)^2 \ln y}} \\
        &\leq (\gamma_-)^2 \int_1^{\frac{1}{\gamma_-}} \frac{\diff y}{\sqrt{1 - \frac{1}{y^2} - 2 (\gamma_-)^2 (y - 1)}} \\
        &\leq (\gamma_-)^2 \int_1^{\frac{1}{\gamma_-}} \frac{\diff y}{\sqrt{\bigl(\frac{y + 1}{y^2} - 2 (\gamma_-)^2 \bigr) (y - 1)}}.
    \end{align*}
    Moreover, there holds for all $y \in [1, \frac{1}{\gamma_-}]$
    \begin{equation*}
        \frac{y + 1}{y^2} \geq \gamma_- + (\gamma_-)^2,
    \end{equation*}
    so that
    \begin{align*}
        \int_{\gamma_-}^{1} \frac{\diff x}{\sqrt{E_\gamma - \frac{1}{x^2} - 2 \ln x}} &\leq \frac{(\gamma_-)^2}{\sqrt{\gamma_- - (\gamma_-)^2}} \int_1^{\frac{1}{\gamma_-}} \frac{\diff y}{\sqrt{y - 1}} \\
        &\leq \frac{\gamma_-}{\sqrt{\frac{1}{\gamma_-} - 1}} \Bigl[ \frac{1}{2} \sqrt{y-1} \Bigr]_1^\frac{1}{\gamma_-} \\
        &\leq \frac{\gamma_-}{2} \longrightarrow 0.
    \end{align*}

    On the other hand, we will prove a lower and an upper bound for the second part of the integral. For the lower bound, we recall that $2 F(\gamma_+) = E_\gamma$, so for all $x \in [1, \gamma_+]$
    \begin{equation*}
        E_\gamma - \frac{1}{x^2} = \frac{1}{(\gamma_+)^2} - \frac{1}{x^2} + 2 \ln \gamma_+ \leq 2 \ln \gamma_+.
    \end{equation*}
    Therefore,
    \begin{align}
        \int_1^{\gamma_+} \frac{\diff x}{\sqrt{E_\gamma - \frac{1}{x^2} - 2 \ln x}} &\geq \int_1^{\gamma_+} \frac{\diff x}{\sqrt{2 \ln \gamma_+ - 2 \ln x}} \notag \\
            &\geq \gamma_+ \int_{\frac{1}{\gamma_+}}^{1} \frac{\diff y}{\sqrt{- 2 \ln y}}. \label{low_bound_period}
    \end{align}
    The last integral converges as $\gamma_+ \rightarrow \infty$ to
    \begin{equation*}
        \int_{0}^{1} \frac{\diff y}{\sqrt{- 2 \ln y}} = \int_0^{\infty} \frac{e^{-z} \diff z}{\sqrt{2 z}} = \int_0^\infty \sqrt{2} \, e^{- \zeta^2} \diff \zeta = \sqrt{\frac{\pi}{2}}.
    \end{equation*}
    Hence, the right-hand side of \eqref{low_bound_period} is equivalent to
    \begin{equation*}
        \sqrt{\frac{\pi}{2}} \exp{\frac{E_\gamma}{2}}.
    \end{equation*}
    
    For the upper bound, with a change of variables $x = e^y$ and with $y_+ = \ln \gamma_+ \rightarrow + \infty$, we first obtain
    \begin{equation*}
        \int_1^{\gamma_+} \frac{\diff x}{\sqrt{E_\gamma - \frac{1}{x^2} - 2 \ln x}} = \int_0^{y_+} \frac{e^y \diff y}{\sqrt{E_\gamma - e^{-2y} - 2 y}}.
    \end{equation*}
    Moreover, using the convexity of $y \mapsto e^{-2y}$, there holds for all $y \in [0, y_+]$
    \begin{equation*}
        e^{-2y} \leq 1 - \frac{y}{y_+} (1 - e^{-2 y_+}).
    \end{equation*}
    Thus, using also the fact that $E_\gamma = e^{-2 y_+} + 2 y_+$, we obtain
    \begin{align*}
        \int_1^{\gamma_+} \frac{\diff x}{\sqrt{E_\gamma - \frac{1}{x^2} - 2 \ln x}} &\leq \int_0^{y_+} \frac{e^y \diff y}{\sqrt{E_\gamma - \bigl( 1 - \frac{y}{y_+} (1 - e^{-2 y_+}) \bigr) - 2 y}} \\
            &\leq \int_0^{y_+} \frac{e^y \diff y}{\sqrt{\Bigl( 2 - \frac{1 - e^{-2 y_+}}{y_+} \Bigr) (y_+ - y)}} \\
            &\leq \frac{e^{y_+}}{\sqrt{2 - \frac{1 - e^{-2 y_+}}{y_+}}} \int_0^{y_+} \frac{e^{-z} \diff z}{\sqrt{z}} \\
            &\leq \frac{\gamma_+}{\sqrt{2 - \frac{1 - e^{-2 y_+}}{y_+}}} \int_0^{(y_+)^2} 2 e^{-z^2} \diff z \ \sim \sqrt{\frac{\pi}{2}} \exp{\frac{E_\gamma}{2}}.
    \end{align*}
    The conclusion readily follows.
\end{proof}

\subsection{Discussion on $u^\alpha$} \label{subsec_discussion_sol}

Those results show an interesting and surprising feature: the "period" of $u^\alpha$ (i.e. the period of $r_\alpha$) can be very large. Moreover, if we take a Gaussian initial data very concentrated
\begin{equation*}
    u_0 (x) = \exp \Bigl( \frac{1}{2} - \delta x^2 \Bigr)
\end{equation*}
with $\delta > 0$ large compared to $\lambda$, the solution will first disperse, very quickly at the beginning but more and more slowly, until a time when this behaviour turns round. Then the solution will re-concentrate, slowly at first and more and more quickly until it comes back to its initial value (up to a complex modulation) at a time around
\begin{equation*}
    \sqrt{\frac{\pi}{2}} \exp{\frac{E_\gamma}{2}} = \sqrt{\frac{\pi \lambda}{2 \delta} } \exp{\frac{\delta}{2 \lambda}}.
\end{equation*}
Indeed, in the proof of Proposition \ref{prop_period_inf}, we also proved implicitly that $\tau_\gamma$ is most of the time larger than $1$ when $E_\gamma$ is large. Even more, we proved that the time during which $\tau_\gamma$ is less than $1$, given by 
\begin{equation*}
    2 \int_{\gamma_-}^{1} \frac{\diff x}{\sqrt{E_\gamma - \frac{1}{x^2} - 2 \ln x}},
\end{equation*}
goes in fact to $0$ as $E_\gamma$ goes to $\infty$.

Since such very flat initial data give a solution which remains flat for a long time but then has a high "peak" which suddenly appears for a brief time, this behaviour might be related to rogue waves.
For instance, the absolute value of the breather with $\delta = 35$ and $\lambda = 0.5$ in the initial data is plotted in Figure \ref{plot_breather}.

\begin{figure}[htp]
\begin{center}
   \includegraphics[scale=0.6]{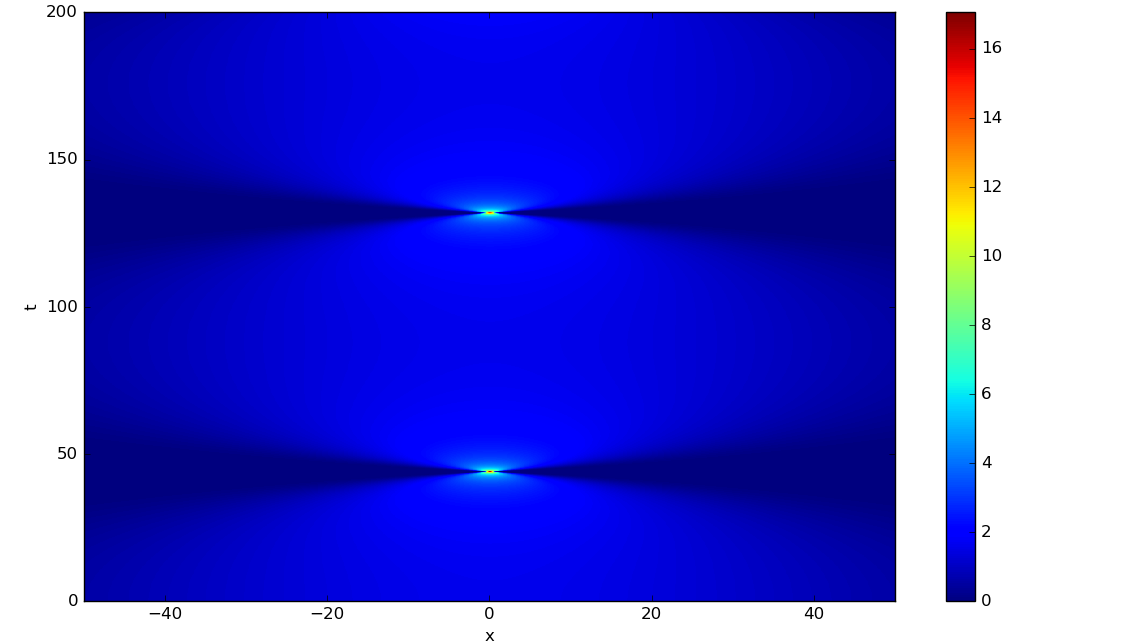}
   \caption{Plot of the breather ($\lambda = 0.5$) with initial data $\exp \Bigl( \frac{1}{2} - \delta x^2 \Bigr)$ with $\delta^{-1} = 2 \times 35^2$. The first "peak" is at $t = 43.86$. \label{plot_breather}}
\end{center}
\end{figure}

\subsection{Proof of Proposition \ref{prop_existence_breathers}}

The proof of this proposition relies on the following result which characterizes $S_d (\mathbb{C})^{\Re ++}$.

\begin{lem} \label{lem_M_d_C+}
    For any $A \in S_d (\mathbb{C})^{\Re ++}$, there exists $R \in \mathcal{O}_d (\mathbb{R})$ and $\beta_1, \dots, \beta_d \in \mathbb{C}^+$ such that
    \begin{equation*}
        A = R D R^\top,
    \end{equation*}
    where
    \begin{equation*}
          D =
              \begin{bmatrix}
                \beta_{1} & & \\
                & \ddots & \\
                & & \beta_{d}
              \end{bmatrix}.
    \end{equation*}
\end{lem}

\begin{proof}
    By definition, $A_r \coloneqq \Re A$ and $A_i \coloneqq \Im A$ are real symmetric and commute. Therefore, they can be orthogonally co-diagonalized, which means that we have a $R \in \mathcal{O}_d (\mathbb{R})$ and $D_r$ and $D_i$ real diagonal matrices such that $A_r = R D_r R^\top$ and $A_i = R D_i R^\top$. Moreover, since $A_r$ is positive definite, the diagonal coefficients of $D_r$ are positive. Therefore, $A = R D R^\top$ where $D = D_r + i D_i$ is a diagonal matrix whose diagonal coefficients are in $\mathbb{C}^+$.
\end{proof}

\begin{proof}[Proof of Proposition \ref{prop_existence_breathers}]
    Set $R \in \mathcal{O}_d (\mathbb{R})$, $\beta_1, \dots, \beta_d \in \mathbb{C}^+$ and $D$ given by Lemma \ref{lem_M_d_C+} for $A$. Since $u^A$ is a solution to \eqref{foc_log_nls} which is invariant under an orthogonal transformation, $u^A (t, Rx)$ is also solution to \eqref{foc_log_nls} with initial data
    \begin{equation*}
        u^A_\textnormal{in} (Rx) = \exp{\Bigl[ \frac{d}{2} - x^\top D x \Bigr]} = u^{\alpha_1}_\textnormal{in} (x_1) \dots u^{\alpha_d}_\textnormal{in} (x_d),
    \end{equation*}
    where for all $j \in \{ 1, \dots, d \}$
    \begin{equation*}
        \alpha_j \coloneqq \frac{1}{\sqrt{2 \Re \beta_j}} - i \frac{\Im \beta}{\sqrt{2 \Re \beta_j}}
    \end{equation*}
    Since every $u^{\alpha_j}$ is solution to \eqref{foc_log_nls} in dimension $1$, we know that
    \begin{equation*}
        u^{\alpha_1} (t) \otimes \dots \otimes u^{\alpha_d} (t)
    \end{equation*}
    is solution to \eqref{foc_log_nls} with the same previous initial data. Thus, by uniqueness of the solution in $\mathcal{C}_b ( \mathbb{R}, W (\mathbb{R}^d))$, there holds
    \begin{equation*}
        u^A (t, R \, . ) = u^{\alpha_1} (t) \otimes \dots \otimes u^{\alpha_d} (t). \qedhere
    \end{equation*}
\end{proof}

\section{Nonlinear superposition} \label{sec_nonlin_superp}

In this section, we prove Theorem \ref{th_stability_stand_multi_sol} (in any dimension $d \in \mathbb{N}^*$). This result is directly inspired from the energy estimate in $L^2$ found in \cite{cazenave-haraux} to prove the uniqueness of the solution for \eqref{foc_log_nls} in the case $\lambda < 0$. 
This energy estimate is the consequence of the following lemma:

\begin{lem}[{\cite[Lemma~1.1.1]{cazenave-haraux}}] \label{lem_log_inequality}
There holds

\begin{equation*}
    \abs{\Im \left( (z_2 \ln \abs{z_2}^2 - z_1 \ln \abs{z_1}^2) (\overline{z_2} - \overline{z_1}) \right)} \leq 2 \abs{z_2 - z_1}^2, \qquad \forall z_1, z_2 \in \mathbb{C}.
\end{equation*}
\end{lem}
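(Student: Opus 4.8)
The plan is to collapse this two-variable complex inequality to a single elementary trigonometric estimate, the point being to first extract an exact algebraic identity for the imaginary part. Writing $g(z) \coloneqq z \ln \abs{z}^2$ (with the continuous convention $g(0) = 0$), I would first dispose of the degenerate cases: if $z_1 = 0$, then the quantity inside the imaginary part equals $\abs{z_2}^2 \ln \abs{z_2}^2$, which is real, so the left-hand side vanishes and the inequality is trivial (symmetrically for $z_2 = 0$, and also for $z_1 = z_2$). Assume henceforth $z_1, z_2 \neq 0$.

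The key step is the identity
\begin{equation*}
    \Im\!\left( (g(z_2) - g(z_1))\,\overline{(z_2 - z_1)} \right) = \Im(z_1 \overline{z_2}) \, \ln \frac{\abs{z_2}^2}{\abs{z_1}^2}.
\end{equation*}
To obtain it I would write $g(z_2) - g(z_1) = (z_2 - z_1)\ln\abs{z_2}^2 + z_1 \ln\frac{\abs{z_2}^2}{\abs{z_1}^2}$ and multiply by $\overline{z_2 - z_1}$: the first contribution is $\abs{z_2 - z_1}^2 \ln\abs{z_2}^2$, which is \emph{real} and hence drops out of the imaginary part, while $\Im(z_1\overline{(z_2-z_1)}) = \Im(z_1\overline{z_2})$ since $z_1\overline{z_1}$ is real. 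This identity is the heart of the matter: it reveals that only the cross term survives and that it is precisely the real (radial) part of the logarithm that couples to $\Im(z_1\overline{z_2})$.

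It then remains to bound $\abs{\Im(z_1\overline{z_2})}\,\abs{\ln\frac{\abs{z_2}^2}{\abs{z_1}^2}}$. Passing to polar coordinates $z_j = r_j e^{i\theta_j}$, setting $\phi \coloneqq \theta_2 - \theta_1$ and $t \coloneqq r_2/r_1$, and using the symmetry of the statement in $(z_1, z_2)$ to assume $t \geq 1$, the target inequality becomes, after dividing by $2 r_1^2$,
\begin{equation*}
    t \,\abs{\sin\phi}\, \ln t \leq 1 + t^2 - 2 t \cos\phi .
\end{equation*}
I would prove this in two elementary moves. First, the standard bound $\ln t \leq t - 1$ together with $t \geq 1$ gives $t\ln t \leq t^2 - t \leq t^2 - 1$, so the left-hand side is at most $(t^2 - 1)\abs{\sin\phi}$. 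Second, taking $\sin\phi \geq 0$ without loss of generality, it suffices to show $2t\cos\phi + (t^2-1)\sin\phi \leq 1 + t^2$, which follows from the amplitude bound $a\cos\phi + b\sin\phi \leq \sqrt{a^2 + b^2}$ with $a = 2t$ and $b = t^2 - 1$, since $\sqrt{4t^2 + (t^2-1)^2} = t^2 + 1$.

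The only genuine obstacle is spotting the algebraic identity in the second paragraph; once the problem is collapsed to a single real inequality in $t$ and $\phi$, the remaining estimates are routine, and the constant $2$ on the right-hand side is seen to be essentially sharp (equality in the amplitude bound occurs at $\tan\phi = (t^2-1)/(2t)$).
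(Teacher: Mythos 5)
Your proof is correct. Note that the paper itself does not prove this lemma: it is imported verbatim from Cazenave--Haraux, so there is no in-paper argument to compare against. Your two key steps both check out: the algebraic identity
\begin{equation*}
    \Im\bigl( (z_2 \ln \abs{z_2}^2 - z_1 \ln \abs{z_1}^2)(\overline{z_2} - \overline{z_1}) \bigr) = \Im(z_1 \overline{z_2}) \, \ln \frac{\abs{z_2}^2}{\abs{z_1}^2}
\end{equation*}
is exactly right (the diagonal terms are real and drop out), the reduction to $t \geq 1$ and $\sin\phi \geq 0$ is legitimate by the symmetry of both sides under $z_1 \leftrightarrow z_2$ and $\phi \mapsto -\phi$, and the chain $t\ln t \leq t^2 - 1$ followed by $2t\cos\phi + (t^2-1)\sin\phi \leq \sqrt{4t^2 + (t^2-1)^2} = t^2+1$ closes the estimate. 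For comparison, the classical argument in Cazenave--Haraux reaches the same identity and then finishes without polar coordinates, by bounding the two factors separately: $\abs{\Im(z_1\overline{z_2})} = \abs{\Im\bigl(z_1(\overline{z_2}-\overline{z_1})\bigr)} \leq \abs{z_1}\,\abs{z_2-z_1}$ and, assuming $\abs{z_2} \geq \abs{z_1}$, $\ln\frac{\abs{z_2}^2}{\abs{z_1}^2} \leq 2\frac{\abs{z_2}-\abs{z_1}}{\abs{z_1}} \leq 2\frac{\abs{z_2-z_1}}{\abs{z_1}}$; multiplying gives $2\abs{z_2-z_1}^2$ directly. That route is slightly shorter, while yours has the merit of exhibiting where the constant $2$ is essentially attained; either is a complete proof.
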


Indeed, taking $u_1$ and $u_2$ two solutions to \eqref{foc_log_nls}, $u \coloneqq u_1 - u_2$ satisfies
\begin{equation*}
    i \, \partial_t u + \frac{1}{2} \Delta u = - \lambda \Bigl( u_1 \ln{\abs{u_1}^2} - u_2 \ln{\abs{u_2}^2} \Bigr).
\end{equation*}
Thus, we directly get
\begin{equation*}
    \frac{1}{2} \frac{\diff}{\diff t} \norm{u (t)}_{L^2}^2 = - \lambda \Im \int \Bigl( u_1 \ln{\abs{u_1}^2} - u_2 \ln{\abs{u_2}^2} \Bigr) ( \overline{u_1} - \overline{u_2} ) \diff x \leq 2 \abs{\lambda} \, \norm{u (t)}_{L^2}^2.
\end{equation*}
We emphasize that this inequality does not involve the $H^1$ norm of $u_1$ or $u_2$: it only involves the $L^2$ norm of $u$.
On the other hand, if $v$ is solution to \eqref{foc_log_nls} with initial data $u_1 (0) + u_2 (0)$, one can wonder how close $v (t)$ will stay to $u(t) \coloneqq u_1 (t) + u_2 (t)$. If now we set $w \coloneqq v - (u_1 - u_2) = v - u$, then it satisfies
\begin{equation*}
    i \, \partial_t w + \frac{1}{2} \Delta w = - \lambda \Bigl( v \ln{\abs{v}^2} - u_1 \ln{\abs{u_1}^2} - u_2 \ln{\abs{u_2}^2} \Bigr).
\end{equation*}
Therefore, there holds
\begin{align*}
    \frac{1}{2} \frac{\diff}{\diff t} \norm{w (t)}_{L^2}^2 &= - \lambda \Im \int \Bigl( v \ln{\abs{v}^2} - u_1 \ln{\abs{u_1}^2} - u_2 \ln{\abs{u_2}^2} \Bigr) ( \overline{v} - \overline{u} ) \diff x \\
        &\begin{multlined}[t][13cm] = - \lambda \Im \int \Bigl( v \ln{\abs{v}^2} - u \ln{\abs{u}^2} \Bigr) ( \overline{v} - \overline{u} ) \diff x \\ - \lambda \Im \int \Bigl( u \ln{\abs{u}^2} - u_1 \ln{\abs{u_1}^2} - u_2 \ln{\abs{u_2}^2} \Bigr) ( \overline{v} - \overline{u} ) \diff x \end{multlined} \\
    \frac{1}{2} \abs{\frac{\diff}{\diff t} \norm{w (t)}_{L^2}^2} &\leq 2 \abs{\lambda} \, \norm{w}_{L^2}^2 + \abs{\lambda} \int \abs{ u \ln{\abs{u}^2} - u_1 \ln{\abs{u_1}^2} - u_2 \ln{\abs{u_2}^2} } \abs{w} \diff x \\
        &\leq 2 \abs{\lambda} \, \norm{w}_{L^2}^2 + \abs{\lambda} \norm{ u \ln{\abs{u}^2} - u_1 \ln{\abs{u_1}^2} - u_2 \ln{\abs{u_2}^2} }_{L^2} \norm{w}_{L^2}.
\end{align*}
Dividing by $\norm{w}_{L^2}$, we obtain
\begin{equation*}
    \abs{\frac{\diff}{\diff t} \norm{w (t)}_{L^2}} \leq 2 \abs{\lambda} \, \norm{w}_{L^2} + \abs{\lambda} \norm{ u \ln{\abs{u}^2} - u_1 \ln{\abs{u_1}^2} - u_2 \ln{\abs{u_2}^2} }_{L^2}.
\end{equation*}

This estimate can also be generalized to more than 2 solutions with the same computation: for any integer $N \geq 2$ and any solutions $v, u_1, \dots, u_N$ to \eqref{foc_log_nls}, the function $w \coloneqq v - u$ where $u \coloneqq \sum u_j$ satisfies the inequality
\begin{equation}
    \abs{\frac{\diff}{\diff t} \norm{w (t)}_{L^2}} \leq 2 \abs{\lambda} \, \norm{w}_{L^2} + \abs{\lambda} \norm{ u \ln{\abs{u}^2} - \sum_{j=1}^N u_j \ln{\abs{u_j}^2} }_{L^2}. \label{energy_est_like}
\end{equation}
This estimate can be useful up to two conditions. First, we must know a time $t_0$ where $v (t_0)$ and $u (t_0)$ are close in $L^2$. Then, we also need a way to estimate the last term in the right-hand side. This term should be small for instance if the $u_i$s are "well separated". Such a thing may be hard to prove in general, but it is easier if we have an explicit expression for the $u_i$s. This is the case for the breathers and Gaussons, or more generally for the Gaussian functions solution. In particular, if their centers are far away from each other, then this term is actually very small:

\begin{lem} \label{lem_diff_L_log_L_sum_gaussian}
    For any $d \in \mathbb{N}^*$, there exists $C_d > 0$ such that the following holds.
    Let $N \in \mathbb{N}^*$ and take $x_k \in \mathbb{R}^d$, $\omega_k \in \mathbb{R}$, $\Lambda_k \in S_d (\mathbb{C})^{\Re +}$ and $\theta_k : \mathbb{R}^d \rightarrow \mathbb{R}$ a real measurable function for $k = 1, \dots, N$, and define for all $x \in \mathbb{R}^d$
    \begin{equation*}
        g_k (x) = \exp \left[ i \theta_k (x) + \omega_k - (x - x_k)^\top \Lambda_k (x - x_k) \right],
    \end{equation*}
    as well as
    \begin{equation*}
        g (x) = \sum_{k = 1,\dots,N} g_k (x).
    \end{equation*}
    If
    \begin{equation*}
        \varepsilon \coloneqq \left( \min_{k \neq j} \, \abs{x_{j} - x_k} \right)^{-1} < \varepsilon_0 \coloneqq  \min \biggl( \frac{\sqrt{\lambda_+}}{\max (\sqrt{\delta \omega + 1}, \sqrt{\ln{N}})}, \sqrt{\frac{\lambda_-}{d+2}} \biggr)
    \end{equation*}
    where $\delta \omega \coloneqq \max\limits_{j,k} |\omega_k - \omega_j|$, $\lambda_+ = \max\limits_k \Re \sigma (\Lambda_k)$ and $\lambda_- = \min\limits_k \Re \sigma (\Lambda_k) > 0$, then
    \begin{equation*} 
        \norm{g \ln \abs{g} - \sum_{k = 1}^N g_k \ln \abs{g_k}}_{L^2 (\mathbb{R}^d)} \leq C_d N^\frac{3}{2} \, \frac{\lambda_+}{\varepsilon^{\frac{d}{2}+1} \sqrt{\lambda_-}} \, \exp \left[ - \frac{\lambda_-}{4 \varepsilon^2} + \max_j \omega_j \right].
    \end{equation*}
\end{lem}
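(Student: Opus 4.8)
The plan is to exploit the exact cancellation hidden in this difference. Since $g=\sum_k g_k$, one has the pointwise identity
\[
    g\ln\abs{g}-\sum_{k=1}^N g_k\ln\abs{g_k}=\sum_{k=1}^N g_k\,\ln\frac{\abs{g}}{\abs{g_k}},
\]
which vanishes wherever one Gaussian dominates the sum. Writing $h_k\coloneqq g-g_k=\sum_{j\neq k}g_j$ and $D\coloneqq 1/\varepsilon$, I would split $\mathbb{R}^d$ into the \emph{dominant} region $\mathcal{D}\coloneqq\bigcup_k\{2\abs{h_k}\le\abs{g_k}\}$ and the \emph{overlap} region $\mathcal{O}$, its complement, and estimate $\norm{\cdot}_{L^2}^2$ on each. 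On $\mathcal D$ the cancellation is genuine; on $\mathcal O$ every Gaussian is exponentially small and a crude bound suffices. Two geometric facts are used throughout: on the Voronoi cell of $x_k$ one has $\abs{x-x_j}\ge\max(\abs{x-x_k},\tfrac12 D)$ for $j\neq k$, and $\abs{g_j(x)}\le e^{\omega_j}e^{-\lambda_-\abs{x-x_j}^2}$.

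On the dominant region, where $2\abs{h_k}\le\abs{g_k}$ one has $\abs{g}\ge\tfrac12\abs{g_k}$, so the segment from $g_k$ to $g$ stays away from $0$ and no logarithmic singularity occurs. A first‑order estimate for $z\mapsto z\ln\abs z$ then gives $\bigl|g_k\ln\tfrac{\abs g}{\abs{g_k}}\bigr|\le 2\abs{h_k}$, while the terms $g_j\ln\tfrac{\abs g}{\abs{g_j}}$ for $j\neq k$ are bounded by $\abs{g_j}\bigl(\abs{\ln\abs g}+\abs{\ln\abs{g_j}}\bigr)\le C\abs{g_j}\bigl(1+\lambda_+\abs{x-x_k}^2+\lambda_+\abs{x-x_j}^2\bigr)$, again without singularity. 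Every surviving term carries a factor $\abs{g_j}$ with $j\neq k$, i.e. a Gaussian centred at distance $\ge D/2$, so integrating these Gaussian‑times‑polynomial expressions over the cell yields a contribution of order $e^{-\lambda_- D^2/2}$ with polynomial prefactors.

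The delicate region is $\mathcal O$, and this is where I expect the main obstacle. Here I would avoid isolating $\ln\abs g$, which blows up where $g$ nearly cancels, by keeping the bounded combination $g\ln\abs g$ together: with $\Phi(s)\coloneqq s\abs{\ln s}$, which is continuous, vanishes at $0$ and is increasing on $(0,1/e)$, the triangle inequality gives the singularity‑free bound $\bigl|g\ln\abs g-\sum_j g_j\ln\abs{g_j}\bigr|\le\Phi(\abs g)+\sum_j\Phi(\abs{g_j})\le\Phi\bigl(\sum_j\abs{g_j}\bigr)+\sum_j\Phi(\abs{g_j})$. The hard part is then to show that on $\mathcal O$ every $\abs{g_j}$ already has the \emph{sharp} size $e^{-\lambda_- D^2/4}$: since no Gaussian dominates there, at any such point the largest value $\abs{g_m}$ is balanced by the others, forcing a second centre within range, and the parallelogram inequality $\abs{x-x_m}^2+\abs{x-x_j}^2\ge\tfrac12\abs{x_m-x_j}^2\ge\tfrac12 D^2$ yields $\abs{g_m(x)}\le e^{\max_j\omega_j+\delta\omega/2}\,e^{-\lambda_- D^2/4}$. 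This is precisely where the threshold $\varepsilon<\varepsilon_0$ enters, through the conditions on $\delta\omega$ and $\ln N$ ensuring $\sum_j\abs{g_j}<1/e$ on $\mathcal O$ (so that $\Phi$ is monotone) and that the $\delta\omega$‑shift of the balance surface does not degrade the exponent.

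Finally I would assemble the two contributions. Each reduces to integrals of Gaussian‑times‑polynomial functions over shells at distance $\gtrsim D/2$ from the centres; these produce the exponential factor $e^{-\lambda_- D^2/2}=e^{-\lambda_-/(2\varepsilon^2)}$, the prefactor $\lambda_+^2/\lambda_-$ coming from the quadratic logarithmic weights $\abs{\ln\abs{g_k}}\sim\lambda_+\abs{x-x_k}^2$ and the Gaussian normalisations, and the power $\varepsilon^{-(d/2+1)}=D^{d/2+1}$ from the shell volumes together with those weights. Counting the $N$ terms — a Cauchy–Schwarz in the outer sum, the number of interacting neighbours, and the sum over cells — yields $N^{3/2}$ after taking the square root, and substituting $D=1/\varepsilon$ gives the stated bound. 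The genuinely sharp point remains the overlap estimate: obtaining the exponent $\lambda_-/4$ rather than the lossy $\lambda_-^2/(4\lambda_+)$ that a naive ``distance to the nearest centre'' bound would produce.
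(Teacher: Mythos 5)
Your route is genuinely different from the paper's. The paper does not split according to which Gaussian dominates: it partitions $\mathbb{R}^d$ into the Voronoi cells $I_k=\{x:\abs{x-x_k}\le\abs{x-x_j}\ \forall j\}$ and, on each cell, applies a tailor-made ``almost Lipschitz'' estimate $\abs{F(\tilde z)-F(z)}\le\abs{\tilde z-z}\,(6-\ln\abs{z}^2)$ (valid for $\abs{z},\abs{\tilde z}\le1$, $z\neq0$, after normalising by $N^{-1}e^{-\omega}$) with $z=g_k(x)$ and $\tilde z=g(x)$. The whole point of that lemma is that the logarithmic weight sits on the reference Gaussian $g_k$, which never vanishes and whose logarithm is an explicit quadratic, so no dominance dichotomy is needed at all; and the Voronoi geometry gives $\abs{x-x_j}\ge\frac12\abs{x_j-x_k}\ge\frac{1}{2\varepsilon}$ for $j\neq k$ purely metrically, independently of the widths $\Lambda_k$, which is exactly what produces the sharp exponent $\lambda_-/4$ without any loss in $\lambda_-/\lambda_+$.

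The gap in your version is the passage from the pointwise bound on $\mathcal O$ to the $L^2$ bound. Your parallelogram argument does give $\abs{g_m(x)}\le\sqrt{2N}\,e^{\omega}e^{-\lambda_- D^2/4}$ at every point of $\mathcal O$, but $\mathcal O$ has infinite Lebesgue measure in general (for two equal-width Gaussians it contains a full slab around the mid-hyperplane), and when $\lambda_-\ll\lambda_+$ and $\delta\omega$ is large it is \emph{not} contained in the union of shells $\{\abs{x-x_j}\ge D/2\}$: the centre $x_j$ itself lies in $\mathcal O$ as soon as $\lambda_- D^2<\delta\omega+\ln(2(N-1))$, which your hypothesis $\lambda_+D^2>\delta\omega+1$ does not exclude. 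So the assembly step ``integrals over shells at distance $\gtrsim D/2$ from the centres'' does not apply to $\mathcal O$, and the obvious interpolation (sup bound for one factor of $\abs{g_m}$, Gaussian decay of the other for integrability) only yields $\norm{\Phi(\abs{g_m})}_{L^2(\mathcal O)}\lesssim e^{-\lambda_- D^2/8}$, half the claimed exponent. To close this you must keep the exact identity $\abs{x-x_m}^2+\abs{x-x_j}^2=\frac12\abs{x_m-x_j}^2+2\abs{x-\frac{x_m+x_j}{2}}^2$, so that $\abs{g_m}^2\le 2N\abs{g_m}\abs{g_j}\le 2Ne^{2\omega}e^{-\lambda_- D^2/2}e^{-2\lambda_-\abs{x-c_{mj}}^2}$ is a genuine integrable Gaussian centred at the midpoint $c_{mj}$, and then sum over the ($x$-dependent) pairs $(m,j)$. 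The same repair is needed on $\mathcal D$: the dominance set $\{2\abs{h_k}\le\abs{g_k}\}$ is not the Voronoi cell of $x_k$, so $\abs{x-x_j}\ge D/2$ is not available there either; use $\abs{g_j}\le\frac12\abs{g_k}$ and the same product trick. With these fixes your argument closes and gives the stated constants, but it is strictly more work than the paper's proof, which sidesteps the dichotomy entirely.
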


Such an estimate allows us to prove Theorem \ref{th_stability_stand_multi_sol}.

\begin{proof}[Proof of Theorem \ref{th_stability_stand_multi_sol}]
    Thanks to Proposition \ref{prop_expression_general_gaussian} and \eqref{eq:by-products_breathers}, we know that each \\ $G_k \coloneqq G^d_{A_k, \omega_k, x_k, v, \theta_k}$ can be written under the form
    \begin{equation*}
        G_k (t,x) = \exp \left[ i \theta_k (t,x) + \tilde{\omega}_k (t) - (x - x_k - v t)^\top A_k (t) (x - x_k - v t) \right],
    \end{equation*}
    with
    \begin{equation*}
        \tilde{\omega}_k (t) = \omega_k + \frac{d}{2} - \frac{1}{4} \ln{\frac{\det{\Re A_k (t)}}{\det{\Re A_k (0)}}}.
    \end{equation*}
    In particular, we have 
    \begin{equation*}
        \sup_{t,\ell,k} \, \abs{\tilde{\omega}_\ell (t) - \tilde{\omega}_k (t)} \leq \delta \tilde{\omega} \coloneqq \delta \omega + \frac{d}{2} \ln{\frac{\tau_+}{\tau_-}},
    \end{equation*}
    where $\tau_+ = \sup\limits_{t,k} \Re \sigma (A_k (t))$ and $\tau_- = \inf\limits_{t,k} \Re \sigma (A_k (t))$.
    Hence, setting $G \coloneqq \sum_k G_k$ and
    \begin{equation*}
        \varepsilon_0 \coloneqq  \frac{1}{\sqrt{2}} \min \biggl( \frac{\sqrt{\tau_+}}{\max (\sqrt{\delta \tilde{\omega} + 1}, \sqrt{\ln{N}})}, \sqrt{\frac{\tau_-}{\frac{d}{2} + 1}} \biggr),
    \end{equation*}
    and since $\abs{x_k - vt - (x_j - vt)} = \abs{x_k - x_j} \geq \varepsilon^{-1}$ for all $k \neq j$, there holds from Lemma \ref{lem_diff_L_log_L_sum_gaussian}
    \begin{equation*}
        \norm{G \ln \abs{G} - \sum_{k = 1}^N G_k \ln \abs{G_k}}_{L^2 (\mathbb{R}^d)} \leq C_d N^\frac{3}{2} \, \frac{\tau_+}{\varepsilon^{\frac{d}{2}+1} \sqrt{\tau_-}} \, \exp \left[ - \frac{\tau_-}{4 \varepsilon^2} + \max_j \omega_j \right]
    \end{equation*}
    as soon as $\varepsilon < \varepsilon_0$.
    Plugging this into \eqref{energy_est_like}, we get
    \begin{equation*}
        \frac{\diff}{\diff t} \norm{w (t)}_{L^2} \leq 2 \abs{\lambda} \, \norm{w}_{L^2} + C_d N^\frac{3}{2} \, \frac{\lambda \, \tau_+}{\varepsilon^{\frac{d}{2}+1} \sqrt{\tau_-}} \, \exp \left[ - \frac{\tau_-}{4 \varepsilon^2} + \max_j \omega_j \right],
    \end{equation*}
    where $w = u - G$. The result readily follows from the Gronwall lemma and the fact that $w(0) = 0$.
\end{proof}

\subsection{Proof of Lemma \ref{lem_diff_L_log_L_sum_gaussian}}

This lemma shows that we can approximate the non-linearity in the equation for the sum of the Gaussian functions by the sum of the non-linearity of each Gaussian, as soon as these Gaussian are well separated. This kind of result is rather usual, as we often find it when talking about multi-solitons for instance.
The proof usually uses the exponential decay at infinity of the solitons along with the fact that the non-linearity is locally Lipschitz.
Here, we have a better decay at infinity for our functions (which are Gaussian), but our non-linearity $F(z) \coloneqq z \ln \abs{z}^2$ is not Lipschitz at $0$, therefore this kind of result is not obvious at first sight. However, $F$ is actually almost Lipschitz in the following sense:

\begin{lem} \label{lem_lip_z_log_z}
    For all $z, \tilde{z} \in \mathbb{C}$ such that $\abs{z} \leq 1$, $\abs{\tilde{z}} \leq 1$ and $z \neq 0$, there holds
    
    \begin{equation*}
        \abs{F(\tilde{z}) - F(z)} \leq \abs{z - \tilde{z}} \Bigl[ 6 - \ln \abs{z}^2 \Bigr].
    \end{equation*}
\end{lem}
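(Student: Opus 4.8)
The plan is to regard $F(z) = z \ln \abs{z}^2$ as a map $\mathbb{R}^2 \to \mathbb{R}^2$ and to integrate its differential along the segment joining $z$ to $\tilde z$, so that the cancellations hidden in $F(\tilde z) - F(z)$ are tracked automatically; a bare triangle inequality on $\tilde z \ln\abs{\tilde z}^2 - z \ln\abs{z}^2$ is hopeless, since the two pieces nearly cancel when one point is close to $0$. The case $\tilde z = 0$ is immediate, using $F(0)=0$: then $\abs{F(\tilde z) - F(z)} = \abs{z}\,(-\ln\abs{z}^2) = \abs{z - \tilde z}\,(-\ln\abs{z}^2) \leq \abs{z-\tilde z}\,(6 - \ln\abs{z}^2)$. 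So assume $z,\tilde z \neq 0$.

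First I would compute the real Jacobian of $F$ on $\mathbb{R}^2 \setminus \{0\}$. Writing $w = x+iy$, a direct computation (and the fact that $F = \nabla \Phi$ with $\Phi(w) = \tfrac12 \abs{w}^2(\ln\abs{w}^2 - 1)$, so $DF$ is symmetric) gives
\begin{equation*}
    DF(w) = \ln\abs{w}^2 \, I_2 + \frac{2}{\abs{w}^2} \begin{pmatrix} x^2 & xy \\ xy & y^2 \end{pmatrix}.
\end{equation*}
The rank-one matrix has eigenvalues $2$ and $0$, so $DF(w)$ has eigenvalues $\ln\abs{w}^2 + 2$ and $\ln\abs{w}^2$. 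For $\abs{w}\leq 1$ one has $\ln\abs{w}^2 \leq 0$, and a short case check yields the operator-norm bound $\norm{DF(w)} \leq 2 - \ln\abs{w}^2$.

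Next, along $\gamma(t) \coloneqq (1-t)z + t\tilde z$ I would write $F(\tilde z) - F(z) = \int_0^1 DF(\gamma(t))(\tilde z - z)\diff t$. This stays valid even if the segment crosses $0$: there $F$ is continuous and $\mathcal{C}^1$ off the single point $t_0$ where $\gamma$ vanishes, and since $\norm{DF(\gamma(t))} = O(-\ln\abs{t - t_0})$ is integrable, $t \mapsto F(\gamma(t))$ is absolutely continuous and the fundamental theorem of calculus applies. Combined with the norm bound this gives $\abs{F(\tilde z) - F(z)} \leq \abs{\tilde z - z}\int_0^1 (2 - \ln\abs{\gamma(t)}^2)\diff t$. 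Writing $-\ln\abs{\gamma}^2 = \ln\tfrac{\abs{z}^2}{\abs{\gamma}^2} - \ln\abs{z}^2$, the desired conclusion reduces exactly to the integral estimate $\int_0^1 \ln\frac{\abs{z}}{\abs{\gamma(t)}}\diff t \leq 2$, which is precisely where the constant $6$ is calibrated.

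This last inequality is the main obstacle. It is scale-invariant, and factoring the affine function as $\gamma(t) = (\tilde z - z)(t - t_0)$ with $t_0 = z/(z-\tilde z)$ its complex root turns it into $\Psi(t_0) \coloneqq \ln\abs{t_0} - \int_0^1 \ln\abs{t - t_0}\diff t \leq 2$. The key elementary ingredient is the uniform lower bound $\int_0^1 \ln\abs{t - t_0}\diff t \geq -1 - \ln 2$, valid for every $t_0 \in \mathbb{C}$: replacing $t_0$ by $\Re t_0$ only decreases the integrand, and the explicit real function $x \mapsto \int_0^1 \ln\abs{t - x}\diff t$ attains its global minimum $-1-\ln 2$ at $x = \tfrac12$. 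I would then split on $\abs{t_0}$: for $\abs{t_0} \leq e/2$ this bound gives $\Psi(t_0) \leq \ln(e/2) + 1 + \ln 2 = 2$, while for $\abs{t_0} > 1$ the triangle inequality $\abs{t - t_0} \geq \abs{t_0} - 1$ gives $\Psi(t_0) \leq \ln\frac{\abs{t_0}}{\abs{t_0}-1}$, which is decreasing and already below $2$ at $\abs{t_0} = e/2 > 1$, so the two ranges cover all $t_0$. The genuinely delicate points are thus the correct setup itself (integrating $DF$ to keep the cancellation, rather than estimating the two logarithmic terms separately) and the justification of the fundamental theorem of calculus across the at-most-one point where $\gamma$ meets $0$, i.e.\ across the integrable but non-Lipschitz singularity of $F$ at the origin.
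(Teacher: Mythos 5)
Your proof is correct, and its first half coincides with the paper's: both view $F$ as a map on $\mathbb{R}^2$, compute the same Jacobian with eigenvalues $\ln\abs{w}^2+2$ and $\ln\abs{w}^2$, deduce the operator-norm bound $2-\ln\abs{w}^2$ on the closed unit disk, and integrate $DF$ along the segment from $z$ to $\tilde z$. Where you genuinely diverge is in the estimation of the resulting integral $\int_0^1\bigl(-\ln\abs{\gamma(t)}\bigr)\diff t$, which is where the constant $6$ is produced. The paper lower-bounds $\abs{\gamma(t)}$ by $\bigl|\,\abs{z}-t\abs{\tilde z-z}\,\bigr|$ via the reverse triangle inequality, changes variables to get $\int_{\abs{z}-\delta}^{\abs{z}}(1-\ln\abs{v})\diff v$ with an explicit antiderivative, and closes with the elementary case-analysis Lemma \ref{lem_interm_y_ln_y} ($(a-\delta)\ln\abs{a-\delta}-a\ln a\le\delta(1-\ln a)$). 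You instead factor the affine path through its complex root $t_0=z/(z-\tilde z)$, reduce by scale invariance to $\ln\abs{t_0}-\int_0^1\ln\abs{t-t_0}\diff t\le 2$, and conclude from the global minimum $-1-\ln 2$ of $x\mapsto\int_0^1\ln\abs{t-x}\diff t$ together with a split on $\abs{t_0}$; all the numerical checks in your argument are correct. The two finishes are of comparable difficulty: yours isolates the scale-invariant quantity more cleanly, while the paper's stays closer to a pointwise bound and a one-variable lemma. One point where your write-up is actually more careful than the paper's: you justify the identity $F(\tilde z)-F(z)=\int_0^1 DF(\gamma(t))(\tilde z-z)\diff t$ across the at most one point where the segment meets the origin, using the integrability of the logarithmic singularity of $DF$ and the absolute continuity of $t\mapsto F(\gamma(t))$; the paper applies this formula without comment even though $\tilde z$ may vanish or the segment may pass through $0$.
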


This lemma is interesting: for any $x, \tilde{x} \in (0,1]$, the mean value inequality would only give
\begin{equation*}
    \abs{F (x) - F (\tilde{x})} \leq \abs{x - \tilde{x}} \Bigl[ 6 - 2 \ln \min (x, \tilde{x}) \Bigr].
\end{equation*}
In particular, if we fix $x$ and make $\tilde{x}$ goes to $0$, the right-hand side goes to $\infty$ whereas the left-hand side only goes to $F(x) = x \ln{x^2}$. Therefore the above inequality is not optimal and not fitted when we take $\tilde{x}$ which may be very small compared to $x$ or even sometimes vanish.
This lemma shows that we can actually take either $\ln{x}$ or $\ln{\tilde{x}}$ in the right-hand side without having to know which one of $x$ or $\tilde{x}$ is the smallest.
Another advantage is that the expression of any $\ln{\abs{g_j (x)}^2}$ is clearly simpler than $\ln{\abs{g (x)}^2}$, which will allow us to have simpler computations when applying the previous lemma with $z = g_j (x)$ and $\tilde{z} = g (x)$.

\begin{proof}
    For this proof only, we use the identification $\mathbb{C} \approx \mathbb{R}^2$, and we see $F$ as a function from $\mathbb{R}^2$ to itself :
    \begin{equation*}
        F : z = \begin{bmatrix}
            z_r \\
            z_i
            \end{bmatrix} \in \mathbb{R}^2 \mapsto z \ln \abs{z}^2 = \begin{bmatrix}
            z_r \ln \abs{z}^2 \\
            z_i \ln \abs{z}^2
            \end{bmatrix}.
    \end{equation*}
    Then, $F$ is differentiable on $\mathbb{C} \setminus \{ 0 \}$ and we can compute for $z \neq 0$
    \begin{equation*}
        DF (z) = \begin{bmatrix}
            \ln \abs{z}^2 + 2 \frac{z_r^2}{\abs{z}^2} & 2 \frac{z_r z_i}{\abs{z}^2} \\
             2\frac{z_r z_i}{\abs{z}^2} & \ln \abs{z}^2 + 2 \frac{z_i^2}{\abs{z}^2}
            \end{bmatrix} = R_z^{-1} \begin{bmatrix}
            \ln \abs{z}^2 + 2 & 0 \\
            0 & \ln \abs{z}^2
            \end{bmatrix} R_z,
    \end{equation*}
    where $R_z$ is the rotation which maps $z$ onto the real positive half-line. Hence, there holds for all $z \in \mathbb{C}^*$
    \begin{equation*}
        \norm{DF (z)}_2 \leq 2 (\abs{\ln \abs{z}} + 1).
    \end{equation*}
    Then, we compute for $z$ and $\tilde{z}$ satisfying the assumptions above:
    \begin{align}
        F(\tilde{z}) - F(z) &= \int_0^1 DF (z + t (\tilde{z} - z) ) \, (\tilde{z} - z) \diff t, \notag \\
        \abs{F(\tilde{z}) - F(z)} &\leq \int_0^1 \norm{DF (z + t (\tilde{z} - z) )}_2 \diff t \, \abs{\tilde{z} - z} \notag \\
            &\leq 2 \int_0^1 \Bigl( \abs*[\Big]{\ln \abs{z + t (\tilde{z} - z)}} + 1 \Bigr) \diff t \, \abs{\tilde{z} - z}. \label{comp1}
    \end{align}
    By assumption, there holds:
    \begin{equation*}
        \abs*[\Big]{\abs{z} - t \abs{\tilde{z} - z}} \leq \abs{z + t (\tilde{z} - z)} \leq (1-t) \abs{z} + t \abs{\tilde{z}} \leq 1.
    \end{equation*}
    Since $y \mapsto \ln y$ is increasing and non-positive on $(0,1]$, we have for a.e. $t \in [0,1]$
    \begin{equation*}
        \abs*[\Big]{\ln \abs{z + t (\tilde{z} - z)}} = - \ln \abs{z + t (\tilde{z} - z)} \leq - \ln \abs*[\Big]{\abs{z} - t \abs{\tilde{z} - z}}.
    \end{equation*}
    Putting this in \eqref{comp1}, we get
    \begin{align*}
        \abs{F(\tilde{z}) - F(z)} &\leq 2 \int_0^1 ( 1 - \ln \abs*[\Big]{\abs{z} - t \abs{\tilde{z} - z}} ) \diff t \, \abs{\tilde{z} - z} = 2 \int_{\abs{z} - \abs{\tilde{z} - z}}^{\abs{z}} (1 - \ln \abs{v}) \diff v \\
            &\leq 2 \Bigl[ 2v - v \ln \abs{v} \Bigr]_{\abs{z} - \abs{\tilde{z} - z}}^{\abs{z}} = 4 \abs{\tilde{z} - z} + 2 (\abs{z} - \abs{\tilde{z} - z}) \ln \abs*[\Big]{\abs{z} - \abs{\tilde{z} - z}} - 2 \abs{z} \ln \abs{z}.
    \end{align*}
    Then, we need to estimate the difference between the two last terms with the following lemma.
    \begin{lem} \label{lem_interm_y_ln_y}
        For any $a \in (0,1]$ and $\delta \geq 0$ such that $a - \delta \geq -1$, there holds
        \begin{equation*}
            (a - \delta) \ln \abs{a - \delta} - a \ln a \leq \delta ( 1 - \ln a ).
        \end{equation*}
    \end{lem}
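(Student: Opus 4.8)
The plan is to reduce this two-parameter inequality to a single clean convexity estimate via the substitution $b \coloneqq a - \delta$, and then to invoke the elementary bound $\ln t \le t - 1$ (for $t > 0$) already used above. Under this change of variable the constraints $\delta \ge 0$ and $a - \delta \ge -1$ become simply $b \in [-1, a]$, and since $\ln\abs{a-\delta} = \ln\abs{b}$ the awkward absolute value survives only through the sign of $b$, which can then be treated by hand.

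First I would expand the difference of the two sides. Writing $\delta = a - b$, a direct computation (in which the two $\pm a\ln a$ terms cancel) gives
\begin{equation*}
  (a-\delta)\ln\abs{a-\delta} - a\ln a - \delta(1-\ln a) = b\bigl(\ln\abs{b} - \ln a\bigr) - (a - b).
\end{equation*}
Hence the lemma is equivalent to the assertion $b\,\ln\frac{\abs{b}}{a} \le a - b$ for every $b \in [-1,a]$, with the convention that the left-hand side vanishes at $b = 0$ (where the inequality reads $0 \le a$).

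Next I would split according to the sign of $b$ and apply $\ln t \le t-1$ in each case. For $b \in (0,a]$ I take $t = b/a$ and multiply by $b > 0$, obtaining $b\ln\frac{b}{a} \le \frac{b^2}{a} - b$; since $0 < b \le a$ forces $b^2 \le a^2$, the right-hand side is at most $a - b$, as required. For $b \in [-1,0)$ I set $c \coloneqq -b \in (0,1]$, so that $b\ln\frac{\abs{b}}{a} = c\,\ln\frac{a}{c}$; applying $\ln t \le t - 1$ with $t = a/c$ and multiplying by $c > 0$ yields $c\ln\frac{a}{c} \le a - c$, which since $c > 0$ is even smaller than $a + c = a - b$. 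This closes both cases.

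I do not expect a genuine obstacle here: the only subtlety is the bookkeeping around the absolute value and the two signs of $b$, which the substitution isolates cleanly, together with the fact that $\ln t \le t-1$ is sharp enough in both regimes (in the $b>0$ case it is the constraint $b \le a$ that saves the estimate, while in the $b<0$ case one has room to spare). An alternative, more computational route would be to fix $a$ and study the function $\delta \mapsto (a-\delta)\ln\abs{a-\delta} - a\ln a - \delta(1-\ln a)$ directly, noting that it vanishes at $\delta = 0$ and that its derivative equals $\ln a - \ln\abs{a-\delta} - 2$, then ruling out a sign change; but the substitution above avoids this analysis of critical points and is the approach I would write up.
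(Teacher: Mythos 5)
Your proof is correct. The algebra of the substitution $b \coloneqq a-\delta$ checks out: the claimed identity
\begin{equation*}
(a-\delta)\ln\abs{a-\delta} - a\ln a - \delta(1-\ln a) \;=\; b\ln\frac{\abs{b}}{a} - (a-b)
\end{equation*}
is exact, and both sign cases close as you describe ($b\in(0,a]$ via $\ln(b/a)\le b/a-1$ together with $b^2\le a^2$; $b\in[-1,0)$ via $\ln(a/c)\le a/c-1$ with $c=-b$, plus the slack $a-c\le a+c$). This is, however, organized differently from the paper's argument, which keeps the variables $(a,\delta)$ and splits into three regimes, $\delta<a$, $\delta\ge 2a$, and $a\le\delta<2a$: in the first two it exploits the monotonicity of $\ln$ and the sign of $a-\delta$ to replace $\ln\abs{a-\delta}$ by $\ln a$, and in the middle regime it factors out $a$ and invokes the bound $y\ln\abs{y}\le 1$ for $y\in[-1,0]$. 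Your route reduces everything to the single inequality $b\ln(\abs{b}/a)\le a-b$ and uses only the one tool $\ln t\le t-1$ (already used earlier in the paper), with two cases instead of three; it also makes visible that the hypotheses $a\le 1$ and $a-\delta\ge -1$ are not actually needed (your negative case works for any $c>0$), so you prove a slightly more general statement. The paper's version is marginally more self-contained in that it never leaves the original variables, which matches how the lemma is invoked with $a=\abs{z}$ and $\delta=\abs{\tilde z - z}$. Either write-up is acceptable.
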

    
    The conclusion follows from applying this lemma with $a = \abs{z}$ and $\delta = \abs{\tilde{z} - z}$.
\end{proof}

\begin{proof}[Proof of Lemma \ref{lem_interm_y_ln_y}]
    Take $a$ and $\delta$ satisfying the assumptions of the Lemma.
    \begin{itemize}
        \item If $\delta < a$, then $0 \leq a - \delta \leq a$, so in particular $\ln \abs{a - \delta} \leq \ln a$, which yields
        \begin{equation*}
            (a - \delta) \ln \abs{a - \delta} - a \ln a \leq (a - \delta) \ln a - a \ln a = - \delta \ln a.
        \end{equation*}
        \item If $\delta \geq 2 a$, in the same way, we have $a - \delta \leq - a < 0$, in particular $\ln \abs{a - \delta} \geq \ln a$ which yields
        \begin{equation*}
            (a - \delta) \ln \abs{a - \delta} - a \ln a \leq (a - \delta) \ln a - a \ln a = - \delta \ln a.
        \end{equation*}
        \item Otherwise, if $a \leq \delta < 2a$, then $-1 < \frac{a - \delta}{a} = 1 - \frac{\delta}{a} \leq 0$ and we can compute
        \begin{equation*}
            (a - \delta) \ln \abs{a - \delta} - a \ln a = (a - \delta) \ln \abs{\frac{a - \delta}{a}} - \delta \ln a \leq a - \delta \ln a \leq \delta (1 - \ln a).
        \end{equation*}
        where we have used the fact that $y \ln \abs{y} \leq 1$ for all $y \in [-1,0]$. \qedhere
    \end{itemize}
\end{proof}

Substituting $\tilde{z}$ by $g (x)$ and $z$ by $g_k (x)$ (which does not vanish) for some $k$, we see that the $\ln{\abs{z}^2}$ in the right-hand side becomes a quadratic function in $x$, which is totally harmless compared with the decay at infinity of the Gaussons, provided that we use this inequality with $g_k$ only where $g_k$ is "predominant".
To be more precise, we will apply such an estimate on $I_k$ defined by
\begin{equation*}
    I_k = \{ x \in \mathbb{R}^d, \abs{x - x_k} \leq \abs{x - x_j} \quad \forall j \neq k \}.
\end{equation*}
It is easy to see that, since the $x_i$ are all different from each other, there holds $\mathcal{L}_d ( I_j \cap I_k ) = 0$ for all $j \neq k$
where $\mathcal{L}_d$ is the Lebesgue measure in $\mathbb{R}^d$.
Moreover, there also holds $\mathbb{R}^d = \bigcup\limits_{j} I_j$,
so that
\begin{equation*}
    \int_{\mathbb{R}^d} = \sum_j \int_{I_j}.
\end{equation*}
However, we also need $\abs{g (x)}$ and $\abs{g_k (x)}$ to be smaller than $1$ everywhere in order to apply the previous lemma, which means that we need the $\omega_j$s to be negatively large enough, but we show that we can stick to this case.

\begin{prop} \label{prop_L_infty_bound_multigauss}
    Set $\omega \coloneqq \max_j \omega_j$.
    For all $x \in \mathbb{R}^d$, there holds
    \begin{equation*}
        \sum_j \abs{g_j (x)} \leq N e^{\omega}.
    \end{equation*}
\end{prop}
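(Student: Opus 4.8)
The plan is to bound each summand $\abs{g_j(x)}$ by $e^\omega$ uniformly in $x$ and then simply add up the $N$ resulting estimates. The whole point is that taking the modulus kills the oscillatory factor and reduces everything to the sign of the real part of the Gaussian exponent.

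Concretely, I would first compute, for each $k$, the modulus of $g_k$. Since $\theta_k$ takes real values and $\omega_k$ is real, only the real part of the exponent survives:
\begin{equation*}
    \abs{g_k (x)} = \exp \left[ \omega_k - \Re \left( (x - x_k)^\top \Lambda_k (x - x_k) \right) \right].
\end{equation*}
The second key observation is that $x - x_k$ is a real vector, so for the quadratic form one has $\Re \left( (x - x_k)^\top \Lambda_k (x - x_k) \right) = (x - x_k)^\top (\Re \Lambda_k) (x - x_k)$; the imaginary part $\Im \Lambda_k$ contributes nothing to the modulus. Because $\Lambda_k \in S_d(\mathbb{C})^{\Re +}$, the real matrix $\Re \Lambda_k$ is positive definite, hence this quadratic form is nonnegative. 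Therefore
\begin{equation*}
    \abs{g_k (x)} \leq e^{\omega_k} \leq e^{\omega} \qquad \text{for all } x \in \mathbb{R}^d,
\end{equation*}
by definition of $\omega = \max_j \omega_j$.

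Summing this uniform bound over the $N$ indices $j = 1, \dots, N$ then gives $\sum_j \abs{g_j(x)} \leq N e^\omega$, which is exactly the claim. There is essentially no obstacle in this argument: the only point deserving a moment of care is the reduction of $\Re\bigl((x-x_k)^\top \Lambda_k (x-x_k)\bigr)$ to $(x-x_k)^\top (\Re \Lambda_k)(x-x_k)$, which relies on $x - x_k$ being real so that the imaginary part of $\Lambda_k$ drops out and positive definiteness of $\Re \Lambda_k$ can be invoked.
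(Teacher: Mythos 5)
Your proposal is correct and follows essentially the same route as the paper, which simply invokes the bound $\abs{g_j(x)} \leq e^{\omega_j} \leq e^{\omega}$ for each $j$ and sums; you merely spell out the (correct) justification that $\Re\bigl((x-x_k)^\top \Lambda_k (x-x_k)\bigr) = (x-x_k)^\top (\Re\Lambda_k)(x-x_k) \geq 0$ since $x - x_k$ is real and $\Re\Lambda_k$ is positive definite.
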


\begin{proof}
    It easily follows from the fact that for any $j$ and any $x \in \mathbb{R}^d$, there holds $\abs{g_j (x)} \leq e^{\omega_j} \leq e^\omega$
\end{proof}

We define $\tilde{g}_k \coloneqq N^{-1} e^{- \omega} \, g_k$ and $\tilde{g} \coloneqq N^{-1} e^{- \omega} \, g$, so that
\begin{gather}
    \abs{\tilde{g} (x)} \leq \sum_k \abs{\tilde{g}_k (x)} \leq 1, \notag \\
    \abs{g (x) \ln \abs{g (x)}^2 - \sum_{j = 1}^N g_j (x) \ln \abs{g_j (x)}^2} = N e^{\omega} \abs{\tilde{g} (x) \ln \abs{\tilde{g} (x)}^2 - \sum_{j = 1}^N \tilde{g}_j (x) \ln \abs{\tilde{g}_j (x)}^2}. \label{lien_diff_x_ln_x_g_tilde_g}
\end{gather}
Then, we can use Lemma \ref{lem_lip_z_log_z}, which leads to:

\begin{prop} \label{prop_est_diff_g_ln_g_tilde}
    For all $k \in \{ 1, \dots, N \}$ and $x \in \mathbb{R}^d$, there holds
    \begin{equation*}
        \abs{\tilde{g} (x) \ln \abs{\tilde{g} (x)}^2 - \sum_{j = 1}^N \tilde{g}_j (x) \ln \abs{\tilde{g}_j (x)}^2} \leq 2 \sum_{j \neq k} \abs{\tilde{g}_j (x)} \Bigl[ \delta \omega_j + \delta \omega_k + 3 + 2 \ln{N} + \lambda_+ \abs{x - x_k}^2 + \lambda_+ \abs{x - x_j}^2 \Bigr], \label{est_diff_g_ln_g_tilde}
    \end{equation*}
    where $\delta \omega_j \coloneqq \omega - \omega_j$ for all $j$.
\end{prop}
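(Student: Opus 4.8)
The plan is to estimate $F(\tilde g) - \sum_j F(\tilde g_j)$, with $F(z) = z \ln \abs{z}^2$, by singling out the fixed index $k$ and treating $\tilde g_k$ as the locally dominant term. First I would write
\begin{equation*}
    F(\tilde g) - \sum_{j=1}^N F(\tilde g_j) = \bigl( F(\tilde g) - F(\tilde g_k) \bigr) - \sum_{j \neq k} F(\tilde g_j),
\end{equation*}
so that the triangle inequality reduces the claim to bounding $\abs{F(\tilde g) - F(\tilde g_k)}$ and each $\abs{F(\tilde g_j)}$ for $j \neq k$. The whole point of isolating $\tilde g_k$ is that Lemma \ref{lem_lip_z_log_z} produces a factor involving $\ln \abs{\tilde g_k}$ rather than the unwieldy $\ln \abs{\tilde g}$, and $\ln \abs{\tilde g_k}^2$ is an explicit quadratic function of $x$ coming straight from the definition of $g_k$.

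For the first term I would apply Lemma \ref{lem_lip_z_log_z} with $z = \tilde g_k$ (which never vanishes, being an exponential) and $\tilde z = \tilde g$, both of modulus at most $1$ by the construction of $\tilde g$ (Proposition \ref{prop_L_infty_bound_multigauss}). Using $\tilde g - \tilde g_k = \sum_{j \neq k} \tilde g_j$, this yields
\begin{equation*}
    \abs{F(\tilde g) - F(\tilde g_k)} \le \abs{\tilde g - \tilde g_k}\bigl( 6 - \ln \abs{\tilde g_k}^2 \bigr) \le \Bigl( \sum_{j \neq k} \abs{\tilde g_j} \Bigr)\bigl( 6 - \ln \abs{\tilde g_k}^2 \bigr).
\end{equation*}
From $\tilde g_k = N^{-1} e^{-\omega} g_k$ I would then compute $\ln \abs{\tilde g_k}^2 = -2\ln N - 2\,\delta \omega_k - 2 (x-x_k)^\top \Re \Lambda_k\,(x-x_k)$, and bound the quadratic form by $\lambda_+ \abs{x-x_k}^2$, so that $6 - \ln \abs{\tilde g_k}^2 \le 6 + 2\ln N + 2\,\delta \omega_k + 2\lambda_+ \abs{x-x_k}^2$.

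For the remaining terms, since $\abs{\tilde g_j} \le 1$ we have $\abs{F(\tilde g_j)} = \abs{\tilde g_j}\,(-\ln \abs{\tilde g_j}^2)$, and the identical computation gives $-\ln \abs{\tilde g_j}^2 \le 2\ln N + 2\,\delta \omega_j + 2\lambda_+ \abs{x-x_j}^2$. Summing the two contributions over $j \neq k$, the coefficient of each $\abs{\tilde g_j}$ collapses to $6 + 4\ln N + 2\,\delta \omega_k + 2\,\delta \omega_j + 2\lambda_+ \abs{x-x_k}^2 + 2\lambda_+ \abs{x-x_j}^2$, which is exactly twice the bracket in the statement. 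I expect no analytic obstacle here: Lemma \ref{lem_lip_z_log_z} has already absorbed the non-Lipschitz behaviour of $F$ at the origin, and the only genuine care required is the constant bookkeeping, making sure the stray $6$ and the two copies of $\ln N$ regroup into the stated $3 + 2\ln N$ once the overall factor of $2$ is pulled out.
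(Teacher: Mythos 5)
Your proposal is correct and follows essentially the same route as the paper's proof: isolate the dominant term $\tilde g_k$, apply Lemma \ref{lem_lip_z_log_z} with $z=\tilde g_k$ and $\tilde z=\tilde g$, bound $\lvert\tilde g-\tilde g_k\rvert$ by $\sum_{j\neq k}\lvert\tilde g_j\rvert$, control the leftover terms $\lvert\tilde g_j\ln\lvert\tilde g_j\rvert^2\rvert$ directly via $-\ln\lvert\tilde g_j\rvert\leq\delta\omega_j+\ln N+\lambda_+\lvert x-x_j\rvert^2$, and regroup. The constant bookkeeping also matches the stated bound exactly.
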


\begin{proof}
    We recall that for all $j$,
    \begin{equation*}
        \abs{\tilde{g}_j (x)} = \exp \Bigl[ - \delta \omega_j - \ln{N} - (x - x_j)^\top \Lambda_j (x - x_j) \Bigr],
    \end{equation*}
    which also yields that
    \begin{equation*}
        - \ln{\abs{\tilde{g}_j (x)}} \leq \delta \omega_j + \ln{N} + \lambda_+ \abs{x - x_j}^2.
    \end{equation*}
    Then, we can easily compute
    \begin{align*}
        \abs{\tilde{g} (x) \ln \abs{\tilde{g} (x)} - \sum_{j = 1}^N \tilde{g}_j (x) \ln \abs{\tilde{g}_j (x)}} &\leq \abs*[\Big]{\tilde{g} (x) \ln \abs{\tilde{g} (x)} - \tilde{g}_k (x) \ln \abs{\tilde{g}_k (x)}} + \abs{\sum_{j \neq k} \tilde{g}_j (x) \ln \abs{\tilde{g}_j (x)}} \\
            &\leq \abs{\tilde{g} (x) - \tilde{g}_k (x)} \Bigl[ 3 - \ln \abs{\tilde{g}_k (x)} \Bigr] + \sum_{j \neq k} \abs{\tilde{g}_j (x)} \abs{\ln \abs{\tilde{g}_j (x)}} \\
            &\leq \abs{\sum_{j \neq k} \tilde{g}_j (x)} \Bigl[ 3 - \ln \abs{\tilde{g}_k (x)} \Bigr] + \sum_{j \neq k} \abs{\tilde{g}_j (x)} \abs{\ln \abs{\tilde{g}_j (x)}} \\
            &\leq \sum_{j \neq k} \abs{\tilde{g}_j (x)} \Bigl[ 3 - \ln \abs{\tilde{g}_k (x)} - \ln \abs{\tilde{g}_j (x)} \Bigr].
    \end{align*}
    The conclusion follows.
\end{proof}

Thanks to \eqref{lien_diff_x_ln_x_g_tilde_g}, we can readily come back in terms of $g$ and $g_j$.

\begin{cor} \label{cor_est_diff_g_ln_g}
    For all $k \in \{ 1, \dots, N \}$ and $x \in \mathbb{R}^d$, there holds
    \begin{equation}
        \abs{g (x) \ln \abs{g (x)}^2 - \sum_{j = 1}^N g_j (x) \ln \abs{g_j (x)}^2} \leq 2 \sum_{j \neq k} \abs{g_j (x)} \Bigl[ \delta \omega_j + \delta \omega_k + 3 + 2 \ln{N} + \lambda_+ \abs{x - x_k}^2 + \lambda_+ \abs{x - x_j}^2 \Bigr]. \label{est_diff_g_ln_g}
    \end{equation}
\end{cor}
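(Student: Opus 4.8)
The plan is to deduce Corollary~\ref{cor_est_diff_g_ln_g} directly from Proposition~\ref{prop_est_diff_g_ln_g_tilde} by reversing the normalization $\tilde{g}_k = N^{-1} e^{-\omega} g_k$ and $\tilde{g} = N^{-1} e^{-\omega} g$. The key observation is that the passage from the rescaled functions back to the original ones costs only an overall positive constant, while the bracketed quantities on the two right-hand sides are \emph{identical}: they depend only on the $\omega_j$, on $\ln N$, on $\lambda_+$ and on the centers $x_j, x_k$, and in no way on whether one writes $g$ or $\tilde{g}$. Hence essentially nothing has to be recomputed, and the whole argument reduces to tracking one scaling factor.

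Concretely, I would start from the inequality of Proposition~\ref{prop_est_diff_g_ln_g_tilde}, which holds for every $x \in \mathbb{R}^d$ and every index $k$, and multiply both sides by the positive constant $N e^{\omega}$. On the left-hand side, the identity \eqref{lien_diff_x_ln_x_g_tilde_g} turns $N e^{\omega}$ times the rescaled expression into exactly the quantity $\abs{g(x) \ln \abs{g(x)}^2 - \sum_{j=1}^N g_j(x) \ln \abs{g_j(x)}^2}$ appearing in \eqref{est_diff_g_ln_g}. On the right-hand side, I would push the factor $N e^{\omega}$ inside the sum over $j \neq k$ and use $N e^{\omega} \abs{\tilde{g}_j(x)} = \abs{g_j(x)}$, which leaves each bracket untouched and reproduces verbatim the right-hand side of \eqref{est_diff_g_ln_g}. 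Since this works uniformly in $k$ and $x$, it yields the claim.

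I expect no genuine obstacle here: the analytic substance was already spent in Lemma~\ref{lem_lip_z_log_z} (the almost-Lipschitz bound for $F(z) = z \ln \abs{z}^2$) and in Proposition~\ref{prop_est_diff_g_ln_g_tilde}. The only point requiring care is the bookkeeping of the scaling constants — in particular, checking that the prefactor $N e^{\omega}$ cancels exactly the $N^{-1} e^{-\omega}$ hidden in each $\abs{\tilde{g}_j}$, and recalling that the extra constant terms produced by the homogeneity relation $(\kappa z) \ln \abs{\kappa z}^2 = \kappa z \ln \abs{z}^2 + 2 \kappa (\ln \kappa) z$ have already cancelled in \eqref{lien_diff_x_ln_x_g_tilde_g} thanks to $\tilde{g} = \sum_j \tilde{g}_j$. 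As multiplying an inequality by a positive constant preserves it, the corollary then follows at once.
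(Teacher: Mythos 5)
Your proposal is correct and is exactly the paper's argument: the corollary is obtained from Proposition \ref{prop_est_diff_g_ln_g_tilde} by multiplying through by $N e^{\omega}$, using the identity \eqref{lien_diff_x_ln_x_g_tilde_g} on the left and $N e^{\omega}\abs{\tilde{g}_j} = \abs{g_j}$ on the right. Your remark that the scaling-induced logarithmic terms cancel precisely because $\tilde{g} = \sum_j \tilde{g}_j$ is also the correct justification of \eqref{lien_diff_x_ln_x_g_tilde_g}.
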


Thus, we can estimate this difference in $L^2 (I_k)$ norm.

\begin{prop} \label{prop_L2_I_k_est}
    If $\varepsilon \leq \varepsilon_0$ (where $\varepsilon_0$ is defined in Lemma \ref{lem_diff_L_log_L_sum_gaussian}) then
    \begin{equation*}
        \norm{g (x) \ln \abs{g (x)}^2 - \sum_{j = 1}^N g_j (x) \ln \abs{g_j (x)}^2}_{L^2 (I_k)} \leq C_d N \, \frac{\lambda_+}{\varepsilon^{\frac{d}{2}+1} \sqrt{\lambda_-}} e^{\omega-\frac{\lambda_-}{8 \varepsilon^2}}.
    \end{equation*}
\end{prop}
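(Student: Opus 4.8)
The plan is to integrate the pointwise bound of Corollary \ref{cor_est_diff_g_ln_g} over $I_k$, exploiting that on $I_k$ every center $x_j$ with $j \neq k$ is far from $x$. First I would fix $k$, apply Corollary \ref{cor_est_diff_g_ln_g} together with the triangle inequality in $L^2 (I_k)$, and thereby reduce the problem to estimating $\norm{\abs{g_j} P_j}_{L^2(I_k)}$ for each $j \neq k$, where $P_j (x) = \delta \omega_j + \delta \omega_k + 3 + 2 \ln N + \lambda_+ \abs{x - x_k}^2 + \lambda_+ \abs{x - x_j}^2$ is the bracket. The geometric input is that for $x \in I_k$ and $j \neq k$ one has $\abs{x - x_k} \leq \abs{x - x_j}$, whence by the triangle inequality $\varepsilon^{-1} \leq \abs{x_j - x_k} \leq 2 \abs{x - x_j}$, i.e. $\abs{x - x_j} \geq \frac{1}{2 \varepsilon}$; so the integration effectively takes place on the exterior of a ball of radius $\frac{1}{2 \varepsilon}$ centered at $x_j$.

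Second, I would bound the two factors of the integrand separately. Since $\Re \Lambda_j \geq \lambda_-$, one has $\abs{g_j (x)} \leq e^\omega \exp[ - \lambda_- \abs{x - x_j}^2 ]$. For the polynomial weight I would use the defining constraint $\varepsilon < \varepsilon_0$ to convert the additive constants of $P_j$ into the quadratic term: the inequalities $\delta \omega + 1 < \lambda_+ / \varepsilon^2$ and $\ln N < \lambda_+ / \varepsilon^2$ (together with $\lambda_+ / \varepsilon^2 > 1$, which follows from $\varepsilon^2 < \lambda_- / (d+2) \leq \lambda_+$) give $P_j \lesssim \lambda_+ / \varepsilon^2 + \lambda_+ \abs{x - x_j}^2$ once $\abs{x - x_k} \leq \abs{x - x_j}$ is used, and then on the region $\abs{x - x_j} \geq \frac{1}{2 \varepsilon}$ the term $1/\varepsilon^2$ is itself dominated by $4 \abs{x - x_j}^2$. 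Hence $P_j \lesssim \lambda_+ \abs{x - x_j}^2$ pointwise on $I_k$, so that the integrand is controlled by $\lambda_+^2 \abs{x - x_j}^4 e^{2 \omega} \exp[ - 2 \lambda_- \abs{x - x_j}^2 ]$.

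Third comes the Gaussian tail computation, which is the crux. Setting $y = x - x_j$ and splitting the exponent as $e^{- 2 \lambda_- \abs{y}^2} = e^{- \lambda_- \abs{y}^2} e^{- \lambda_- \abs{y}^2}$, I would bound the first factor by its value $e^{- \lambda_- / (4 \varepsilon^2)}$ on the region $\abs{y} \geq \frac{1}{2 \varepsilon}$ and extend the remaining integral $\int_{\mathbb{R}^d} \abs{y}^4 e^{- \lambda_- \abs{y}^2} \diff y = c_d \lambda_-^{-(d/2+2)}$ to all of $\mathbb{R}^d$. This yields $\norm{\abs{g_j} P_j}_{L^2(I_k)} \lesssim C_d \, e^\omega \lambda_+ \lambda_-^{-(d/4+1)} \exp[ - \lambda_- / (8 \varepsilon^2) ]$. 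To reach the stated shape of the constant I would finally trade the excess powers of $\lambda_-^{-1}$ for powers of $\varepsilon^{-1}$ via $\lambda_- > (d+2) \varepsilon^2$, which gives $\lambda_-^{-(d/4+1/2)} \lesssim \varepsilon^{-(d/2+1)}$ and hence $\lambda_+ \lambda_-^{-(d/4+1)} \lesssim \lambda_+ \varepsilon^{-(d/2+1)} \lambda_-^{-1/2}$. Summing the $N-1 \leq N$ contributions over $j \neq k$ produces the factor $N$ and the announced bound.

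The main obstacle is not any single estimate but the bookkeeping required to land on exactly the clean constant $\lambda_+ \varepsilon^{-(d/2+1)} \lambda_-^{-1/2}$: one must use each of the three conditions packaged in $\varepsilon_0$ (to absorb $\delta \omega$, $\ln N$, and the dimensional factor) and choose the splitting of the Gaussian so as to extract the decay $\exp[ - \lambda_- / (8 \varepsilon^2) ]$ while keeping the residual moment integral finite and dimensionally correct. A sharper incomplete-Gamma evaluation of the tail would in fact give the stronger decay $\exp[ - \lambda_- / (4 \varepsilon^2) ]$, but the cruder splitting above already suffices and keeps the argument elementary.
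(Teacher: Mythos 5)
Your proposal is correct, and its skeleton coincides with the paper's: both start from the pointwise bound of Corollary \ref{cor_est_diff_g_ln_g}, both use the geometric fact that $I_k \subset \mathbb{R}^d \setminus B(x_j, \tfrac{1}{2}\abs{x_j - x_k})$ for $j \neq k$, and both invoke the three conditions packaged in $\varepsilon_0$ to absorb $\delta\omega$, $\ln N$ and the dimensional factor. Where you diverge is the treatment of the Gaussian tail. The paper proves a separate technical lemma (Lemma \ref{lem_int_error_gauss_2}) by an integration-by-parts recursion for the truncated moments $\int_R^\infty r^m e^{-\gamma r^2}\,\diff r$, keeps the radius $R_j^k$ in the resulting bound, and only at the end replaces $R_j^k$ by $\tfrac{1}{2\varepsilon}$ using the monotonicity of $\xi \mapsto \xi^{\frac{d}{2}+1} e^{-\lambda_- \xi^2}$; this preserves the full decay $e^{-\lambda_-/(4\varepsilon^2)}$ in each $L^2(I_k)$ norm. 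Your cruder splitting $e^{-2\lambda_-\abs{y}^2} = e^{-\lambda_-\abs{y}^2}\, e^{-\lambda_-\abs{y}^2}$, followed by extending the moment integral to all of $\mathbb{R}^d$ and trading powers of $\lambda_-^{-1}$ for powers of $\varepsilon^{-1}$ via $\lambda_- > (d+2)\varepsilon^2$, is more elementary and dispenses with Lemma \ref{lem_int_error_gauss_2} entirely, but it only yields $e^{-\lambda_-/(8\varepsilon^2)}$. That matches the Proposition exactly as stated, so your proof is complete for the target statement; note however that the paper's subsequent proof of Lemma \ref{lem_diff_L_log_L_sum_gaussian} silently uses the sharper per-cell bound $e^{\omega - \lambda_-/(4\varepsilon^2)}$ that its own argument delivers, so with your version the final superposition estimate would come out with $\exp[-\lambda_-/(8\varepsilon^2)]$ instead of $\exp[-\lambda_-/(4\varepsilon^2)]$ --- a harmless loss for the qualitative conclusion, but worth flagging if you want to reproduce the constants of Theorem \ref{th_stability_stand_multi_sol} verbatim.
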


To prove this result, we need to use Corollary \ref{cor_est_diff_g_ln_g} and estimate the $L^2 (I_k)$ norm of each term of the sum in \eqref{est_diff_g_ln_g}. For this, we will use the following lemma:

\begin{lem} \label{lem_int_error_gauss_2}
    For any $d \in \mathbb{N}^*$, there exists $C_d > 0$ such that for all $\gamma > 0$, $R > 0$ and $x_0 \in \mathbb{R}^d$ such that $R \geq \gamma^{-\frac{1}{2}}$ and $\abs{x_0} \leq 2R$, there holds
    \begin{align*}
        \int_{\mathbb{R}^d \setminus B(0, R)} \abs{x}^4 e^{-\gamma \abs{x}^2} \diff x &\leq C_d \frac{R^{d+2}}{\gamma} e^{-\gamma R^2},
        \\
        \int_{\mathbb{R}^d \setminus B(0, R)} e^{-\gamma \abs{x}^2} \diff x &\leq C_d \frac{R^{d-2}}{\gamma} e^{-\gamma R^2}, \\
        \int_{\mathbb{R}^d \setminus B(0, R)} \abs{x-x_0}^4 e^{-\gamma \abs{x}^2} \diff x &\leq C_d \frac{R^{d+2}}{\gamma} e^{-\gamma R^2}.
    \end{align*}
\end{lem}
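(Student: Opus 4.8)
The plan is to reduce all three bounds to a single one-dimensional tail estimate for the incomplete Gamma function, obtained by passing to polar coordinates. Writing $\kappa_d$ for the surface measure of the unit sphere of $\mathbb{R}^d$, I would first record that for any integer $m \geq 0$ the successive substitutions $s = \rho^2$ and $u = \gamma s$ turn the radial integral into
\begin{equation*}
    \int_{\mathbb{R}^d \setminus B(0, R)} \abs{x}^{2m} e^{-\gamma \abs{x}^2} \diff x = \frac{\kappa_d}{2} \, \gamma^{-\frac{d}{2}-m} \int_{\gamma R^2}^{\infty} u^{\frac{d}{2}+m-1} e^{-u} \diff u .
\end{equation*}
Thus the whole problem rests on controlling $\int_a^\infty u^\nu e^{-u} \diff u$ with $a = \gamma R^2$ and $\nu = \frac{d}{2} + m - 1$.

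The crucial point is the hypothesis $R \geq \gamma^{-\frac{1}{2}}$, which guarantees $a = \gamma R^2 \geq 1$. Under this constraint I claim $\int_a^\infty u^\nu e^{-u} \diff u \leq C_\nu \, a^\nu e^{-a}$ for every real $\nu$. When $\nu \leq 0$ this is immediate, since $u^\nu \leq a^\nu$ on $[a, \infty)$ and the remaining integral of $e^{-u}$ is exactly $e^{-a}$. When $\nu > 0$ I would substitute $u = a + v$ and use the elementary bound $(a+v)^\nu \leq C_\nu (a^\nu + v^\nu)$ together with $a^\nu \geq 1$ to absorb the resulting $\Gamma(\nu+1)$ into $a^\nu$. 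This is the one step where a little care is needed: the constant must be kept depending only on $\nu$ (hence only on $d$) and not on $\gamma$ or $R$ separately, so that the whole dependence enters solely through $a$.

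Feeding this back gives the first two estimates directly. With $m = 0$ one has $\nu = \frac{d}{2} - 1$, and the powers of $\gamma$ combine as $\gamma^{-\frac{d}{2}} \cdot (\gamma R^2)^{\frac{d}{2}-1} = \gamma^{-1} R^{d-2}$, yielding the second inequality. With $m = 2$ one has $\nu = \frac{d}{2} + 1$, and $\gamma^{-\frac{d}{2}-2} \cdot (\gamma R^2)^{\frac{d}{2}+1} = \gamma^{-1} R^{d+2}$, yielding the first. In both cases the exponential $e^{-\gamma R^2}$ is produced by $e^{-a}$.

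For the third estimate the only extra ingredient is the triangle inequality restricted to the exterior region: since $\abs{x_0} \leq 2R \leq 2 \abs{x}$ whenever $\abs{x} \geq R$, one has $\abs{x - x_0} \leq 3 \abs{x}$, hence $\abs{x - x_0}^4 \leq 81 \abs{x}^4$ pointwise on $\mathbb{R}^d \setminus B(0,R)$. The third bound then follows from the first up to the harmless factor $81$, absorbed into $C_d$. The main obstacle, such as it is, is purely bookkeeping, namely ensuring the incomplete-Gamma tail bound is uniform in $(\gamma, R)$ through the single scalar $a = \gamma R^2$; this uniformity is precisely what the assumption $R \geq \gamma^{-\frac{1}{2}}$ secures.
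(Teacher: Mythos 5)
Your proof is correct. The overall strategy coincides with the paper's --- pass to radial coordinates and reduce everything to a one-dimensional tail integral controlled via the hypothesis $\gamma R^2 \geq 1$ --- but both technical steps are carried out differently. For the radial tails, the paper works directly with $J_m = \int_R^\infty r^m e^{-\gamma r^2}\,\diff r$, derives the recursion $J_{m+2} = \frac{1}{2\gamma}R^{m+1}e^{-\gamma R^2} + \frac{m+1}{2\gamma}J_m$ by integration by parts, and closes an induction from the base cases $J_0$ and $J_1$; you instead normalize to the incomplete Gamma function and prove the uniform bound $\int_a^\infty u^\nu e^{-u}\,\diff u \leq C_\nu\, a^\nu e^{-a}$ for $a \geq 1$, which treats all exponents (including the half-integer ones arising in odd dimension) in one stroke and makes the role of the assumption $R \geq \gamma^{-\frac{1}{2}}$ completely transparent; your case split at $\nu = 0$ and the absorption of $\Gamma(\nu+1)$ into $a^\nu$ using $a \geq 1$ are both sound. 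For the third estimate, the paper splits $\abs{x-x_0}^4 \leq C\,(\abs{x}^4 + \abs{x_0}^4)$ and combines the first two bounds with $\abs{x_0} \leq 2R$, whereas you observe that $\abs{x-x_0} \leq 3\abs{x}$ pointwise on $\mathbb{R}^d \setminus B(0,R)$ and reduce the third bound to the first alone; this is slightly shorter and uses the hypothesis on $x_0$ in the same essential way. Both routes yield constants depending only on $d$, as required.
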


Before proving this Lemma, we recall the usual and useful estimate for the Gauss error function.

\begin{lem} \label{lem_int_error_gauss}
    For any $y \geq 1$ and $\gamma > 0$, there holds
    \begin{gather*}
        \int_y^\infty e^{-\gamma x^2} \diff x < \frac{1}{2 \gamma y} e^{-\gamma y^2}.
    \end{gather*}
\end{lem}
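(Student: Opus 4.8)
The plan is to exploit the elementary fact that on the interval of integration the variable $x$ never drops below $y$, so the ratio $x/y$ is at least $1$ and may be inserted into the integrand for free; the point is that this turns the non-elementary Gaussian tail into an integral with an explicit antiderivative.

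Concretely, I would first observe that for every $x \geq y > 0$ one has $x/y \geq 1$, and hence
\[
    e^{-\gamma x^2} \leq \frac{x}{y}\, e^{-\gamma x^2},
\]
with \emph{strict} inequality whenever $x > y$. Integrating this pointwise bound over $(y, \infty)$, and noting that the strict inequality holds on all of $(y, \infty)$ (a set of full measure in the integration domain, equality occurring only at the endpoint $x = y$), yields
\[
    \int_y^\infty e^{-\gamma x^2} \diff x < \frac{1}{y} \int_y^\infty x\, e^{-\gamma x^2} \diff x.
\]

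The remaining integral is elementary: since $\frac{\diff}{\diff x}\bigl[ -\tfrac{1}{2\gamma} e^{-\gamma x^2} \bigr] = x\, e^{-\gamma x^2}$, I would evaluate
\[
    \frac{1}{y} \int_y^\infty x\, e^{-\gamma x^2} \diff x = \frac{1}{y} \Bigl[ -\frac{1}{2\gamma} e^{-\gamma x^2} \Bigr]_y^\infty = \frac{1}{2\gamma y}\, e^{-\gamma y^2},
\]
which is exactly the asserted bound. There is no real obstacle in this argument; the only subtlety worth flagging is the justification of the strict (rather than merely non-strict) inequality, which follows because the inserted factor $x/y$ is strictly greater than $1$ on the open interval $(y, \infty)$. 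I would also remark that the hypothesis $y \geq 1$ is not needed for this estimate — the bound holds for every $y > 0$ — so that condition is presumably imposed only for convenience in the subsequent application within Lemma~\ref{lem_int_error_gauss_2}.
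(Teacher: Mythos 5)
Your proof is correct and is exactly the paper's argument: insert the factor $x/y \geq 1$ into the integrand and evaluate the resulting elementary integral $\int_y^\infty x\,e^{-\gamma x^2}\diff x$. Your side remark that the hypothesis $y \geq 1$ is superfluous for this particular bound is also accurate.
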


\begin{proof}[Proof of Lemma \ref{lem_int_error_gauss}]
    We easily compute:
    \begin{equation*}
        \int_y^\infty e^{-\gamma x^2} \diff x < \int_y^\infty \frac{x}{y} e^{-\gamma x^2} \diff x = \frac{1}{y} \left[ - \frac{e^{-\gamma x^2}}{2 \gamma} \right]^\infty_y = \frac{1}{2 \gamma y} e^{-\gamma y^2}. \qedhere
    \end{equation*}
\end{proof}

\begin{proof}[Proof of Lemma \ref{lem_int_error_gauss_2}]
    For the first estimate, a radial change of variables yields
    \begin{equation*}
        \int_{\mathbb{R}^d \setminus B(0, R)} \abs{x}^4 e^{-\gamma \abs{x}^2} \diff x = C_d \int_R^\infty r^{3+d} e^{-\gamma r^2} \diff r = C_d \times J_{3+d},
    \end{equation*}
    where $J_m = \int_R^\infty r^m e^{-\gamma r^2} \diff r$ for any $m \in \mathbb{N}$. With an integration by parts, we get
    \begin{equation*}
        J_{m+2} = \frac{1}{2 \gamma} R^{m+1} e^{-\gamma R^2} + \frac{m+1}{2 \gamma} J_{m}.
    \end{equation*}
    Since we have $R > \gamma^{-\frac{1}{2}}$ and since there holds
    \begin{equation*}
        J_0 < \frac{1}{2 \gamma R} e^{-\gamma R^2}
        \qquad \textnormal{and} \qquad
        J_1 = \frac{1}{2 \gamma} e^{-\gamma R^2},
    \end{equation*}
    we can easily prove by induction that
    \begin{equation*}
        J_m \leq \frac{C_m}{\gamma} R^{m-1} e^{-\gamma R^2},
    \end{equation*}
    which leads to the first estimate.
    The second estimate can be proved in the same way.
    As for the third estimate, we use the fact that for $x \in \mathbb{R}^d$
    \begin{equation*}
        \abs{x-x_0}^4 \leq C_0 (\abs{x}^4 + \abs{x_0}^4),
    \end{equation*}
    which yields
    \begin{align*}
        \int_{\mathbb{R}^d \setminus B(0, R)} \abs{x-x_0}^4 e^{-\gamma \abs{x}^2} \diff x &\leq C_0 \biggl( \int_{\mathbb{R}^d \setminus B(0, R)} \abs{x}^4 e^{-\gamma \abs{x}^2} \diff x + \abs{x_0}^4 \int_{\mathbb{R}^d \setminus B(0, R)} e^{-\gamma \abs{x}^2} \diff x \biggr) \\
            &\leq C_d \biggl( \int_R^\infty r^{3+d} e^{-\gamma r^2} \diff r + \abs{x_0}^4 \int_R^\infty r^{d-1} e^{-\gamma r^2} \diff r \biggr) \\
            &\leq C_d \biggl( \frac{R^{d+2}}{\gamma} e^{-\gamma R^2} + \frac{\abs{x_0}^4 R^{d-2}}{\gamma} e^{-\gamma R^2} \biggr) \\
            &\leq C_d \frac{R^{d+2}}{\gamma} e^{-\gamma R^2}. \qedhere
    \end{align*}
\end{proof}

\begin{proof}[Proof of Proposition \ref{prop_L2_I_k_est}]
    Thanks to Corollary \ref{cor_est_diff_g_ln_g}, we have
    \begin{multline}
        \norm{g (x) \ln \abs{g (x)} - \sum_{j = 1}^N g_j (x) \ln \abs{g_j (x)}}_{L^2 (I_k)} \leq \sum_{j \neq k} \norm{g_j (x)}_{L^2 (I_k)} \Bigl[ \delta \omega_j + \delta \omega_k + 3 + 2 \ln{N} \Bigr] \\
        + \lambda_+ \norm{g_j (x) \abs{x - x_k}^2}_{L^2 (I_k)} + \lambda_+ \norm{g_j (x) \abs{x - x_j}^2}_{L^2 (I_k)}. \label{comp_4}
    \end{multline}
    For $j \neq k$, we know that for any $x \in I_k$, there holds
    \begin{equation*}
        \abs{x_j - x_k} \leq \abs{x - x_j} + \abs{x - x_k} \leq 2 \abs{x - x_j},
    \end{equation*}
    and thus
    \begin{equation*}
        \abs{x - x_j} \geq \frac{1}{2} \abs{x_j - x_k}.
    \end{equation*}
    Hence, $I_k \subset \mathbb{R}^d \setminus B (x_j, R_j^k)$ where $R_j^k \coloneqq \frac{1}{2} \abs{x_j - x_k}$. Therefore, using also the fact that $y^\top \Lambda_j y \geq \lambda_- \abs{y}^2$ for all $y \in \mathbb{R}^d$, we get
    \begin{align*}
        \norm{g_j (x)}_{L^2 (I_k)}^2 &\leq \int_{\mathbb{R}^d \setminus B (x_j, R_j^k)} \exp \Bigl[ 2 \omega_j - 2 \lambda_- \abs{x - x_j}^2 \Bigr], \\
        \norm{g_j (x) \abs{x - x_k}^2}_{L^2 (I_k)}^2 &\leq \int_{\mathbb{R}^d \setminus B (x_j, R_j^k)} \abs{x - x_k}^4 \exp \Bigl[ 2 \omega_j - 2 \lambda_- \abs{x - x_j}^2 \Bigr], \\
        \norm{g_j (x) \abs{x - x_j}^2}_{L^2 (I_k)}^2 &\leq \int_{\mathbb{R}^d \setminus B (x_j, R_j^k)} \abs{x - x_j}^4 \exp \Bigl[ 2 \omega_j - 2 \lambda_- \abs{x - x_j}^2 \Bigr].
    \end{align*}
    Since $\omega_j \leq \omega$, and with a change of variable, we get
    \begin{align*}
        \norm{g_j (x)}_{L^2 (I_k)}^2 &\leq \int_{\mathbb{R}^d \setminus B (0, R_j^k)} \exp \Bigl[ 2 \omega - 2 \lambda_- \abs{y}^2 \Bigr], \\
        \norm{g_j (x) \abs{x - x_k}^2}_{L^2 (I_k)}^2 &\leq \int_{\mathbb{R}^d \setminus B (0, R_j^k)} \abs{y - (x_k - x_j)}^4 \exp \Bigl[ 2 \omega - 2 \lambda_- \abs{y}^2 \Bigr], \\
        \norm{g_j (x) \abs{x - x_j}^2}_{L^2 (I_k)}^2 &\leq \int_{\mathbb{R}^d \setminus B (0, R_j^k)} \abs{y}^4 \exp \Bigl[ 2 \omega - 2 \lambda_- \abs{y}^2 \Bigr].
    \end{align*}
    Now, we apply Lemma \ref{lem_int_error_gauss_2} in order to estimate all the $L^2 (I_k)$ norms of the right-hand side. However, we need to check the assumptions. We already know that $\abs{x_k - x_j} = 2 R_j^k$. Moreover, there holds from the same equality and with the definition of $\varepsilon$
    \begin{equation}
        R_j^k \geq \frac{1}{2 \varepsilon}. \label{ineg_R_eps}
    \end{equation}
    The fact that $\varepsilon_0 \leq \sqrt{\frac{\lambda_-}{2}}$ yields $R_j^k \geq (2 \lambda_-)^{- \frac{1}{2}}$.
    Along with the fact that $\omega_j \leq \omega$, this leads to
    \begin{align*}
        \norm{g_j (x)}_{L^2 (I_k)}^2 &\leq C_d \frac{(R_j^k)^{d-2}}{\lambda_-} e^{2 \omega -2 \lambda_- (R_j^k)^2}, \\
        \norm{g_j (x) \abs{x - x_k}^2}_{L^2 (I_k)}^2 &\leq C_d \frac{(R_j^k)^{d+2}}{\lambda_-} e^{2 \omega -2 \lambda_- (R_j^k)^2}, \\
        \norm{g_j (x) \abs{x - x_j}^2}_{L^2 (I_k)}^2 &\leq C_d \frac{(R_j^k)^{d+2}}{\lambda_-} e^{2 \omega -2 \lambda_- (R_j^k)^2}.
    \end{align*}
    Since we also assumed
    \begin{equation*}
        \varepsilon_0 \leq \frac{\sqrt{\lambda_+}}{\max ( \sqrt{\delta \omega + 1}, \sqrt{\ln{N}} )},
    \end{equation*}
    \eqref{ineg_R_eps} leads to
    \begin{equation*}
        \max ( \delta \omega + 1, \ln{N} ) \leq \frac{\lambda_+}{\varepsilon^2} \leq 4 \lambda_+ (R_j^k)^2,
    \end{equation*}
    so that
    \begin{align*}
        \Bigl[ \delta \omega_j + \delta \omega_k + 3 + 2 \ln{N} \Bigr] \norm{g_j (x)}_{L^2 (I_k)} &\leq C_d \, \lambda_+ \frac{(R_j^k)^{\frac{d}{2}+1}}{\sqrt{\lambda_-}} e^{\omega - \lambda_- (R_j^k)^2}, \\
        \lambda_+ \norm{g_j (x) \abs{x - x_k}^2}_{L^2 (I_k)} &\leq C_d \, \lambda_+ \frac{(R_j^k)^{\frac{d}{2}+1}}{\sqrt{\lambda_-}} e^{\omega - \lambda_- (R_j^k)^2}, \\
        \lambda_+ \norm{g_j (x) \abs{x - x_j}^2}_{L^2 (I_k)} &\leq C_d \, \lambda_+ \frac{(R_j^k)^{\frac{d}{2}+1}}{\sqrt{\lambda_-}} e^{\omega - \lambda_- (R_j^k)^2}.
    \end{align*}
    Moreover, we know that the function $f_d (\xi) \coloneqq \xi^{\frac{d}{2} + 1} e^{- \lambda_- \xi^2}$ is decreasing for \\ $\xi \in \biggl[ \sqrt{\frac{\frac{d}{2} + 1}{2 \lambda_-}}, \infty \biggr)$. Therefore, as there also holds $\varepsilon_0 \leq \sqrt{\frac{\lambda_-}{d+2}}$, then \eqref{ineg_R_eps} also yields
    \begin{equation*}
        R_j^k \geq \frac{1}{2 \varepsilon} \geq \sqrt{\frac{\frac{d}{2} + 1}{2 \lambda_-}},
    \end{equation*}
    hence
    \begin{align*}
        \Bigl[ \delta \omega_j + \delta \omega_k + 3 + 2 \ln{N} \Bigr] \norm{g_j (x)}_{L^2 (I_k)} &\leq C_d \, \frac{\lambda_+}{\varepsilon^{\frac{d}{2}+1} \sqrt{\lambda_-}} e^{\omega -\frac{\lambda_-}{4 \varepsilon^2}}, \\
        \lambda_+ \norm{g_j (x) \abs{x - x_k}^2}_{L^2 (I_k)} &\leq C_d \, \frac{\lambda_+}{\varepsilon^{\frac{d}{2}+1} \sqrt{\lambda_-}} e^{\omega -\frac{\lambda_-}{4 \varepsilon^2}}, \\
        \lambda_+ \norm{g_j (x) \abs{x - x_j}^2}_{L^2 (I_k)} &\leq C_d \, \frac{\lambda_+}{\varepsilon^{\frac{d}{2}+1} \sqrt{\lambda_-}} e^{\omega -\frac{\lambda_-}{4 \varepsilon^2}}.
    \end{align*}
    We get the result by putting these into \eqref{comp_4}.
\end{proof}

\begin{proof}[Proof of Lemma \ref{lem_diff_L_log_L_sum_gaussian}]
    Define $\varepsilon_0$ as in Proposition \ref{prop_L2_I_k_est}.
    By definition of the sets $I_k$ ($k \in \{ 1, \dots, N \}$) and using Proposition \ref{prop_L2_I_k_est} for $\varepsilon \leq \varepsilon_0$, we get
    
    \begin{align*}
        \norm{g \ln \abs{g} - \sum_{j = 1}^N g_j \ln \abs{g_j}}_{L^2 (\mathbb{R})}^2 &= \sum_k \norm{g \ln \abs{g} - \sum_{j = 1}^N g_j \ln \abs{g_j}}_{L^2 (I_k)}^2 \\
        &\leq \sum_k C_d N^2 \frac{(\lambda_+)^2}{\varepsilon^{d+2} \lambda_-} e^{2 \omega -\frac{\lambda_-}{2 \varepsilon^2}} \\
        &\leq C_d N^3 \frac{(\lambda_+)^2}{\varepsilon^{d+2} \lambda_-} e^{2 \omega -\frac{\lambda_-}{2 \varepsilon^2}}. \qedhere
    \end{align*}
\end{proof}

\bibliographystyle{abbrv}
\bibliography{sample}

\end{document}